\renewcommand{\vec}{\mathbf}
\theoremstyle{plain}
\newtheorem{theorem}{Theorem}
\newtheorem{lemma}{Lemma}
\newtheorem{proposition}{Proposition}
\theoremstyle{definition}
\newtheorem{definition}{Definition}
\newtheorem{remark}{Remark}
\newtheorem{example}{Example}
\newcommand{\diag}{\operatorname{diag\,}}
\newcommand{\MCov}{M_\mathrm{Cov}}
\title[Anisotropic polygonal and polyhedral elements]{Anisotropic polygonal and polyhedral discretizations in finite element analysis}
\author{Steffen Wei{\ss}er}
\address{Steffen Wei{\ss}er, Department of Mathematics, Saarland University, 66041 Saarbr\"ucken, Germany}
\email{weisser@num.uni-sb.de}
\subjclass[2010]{65D05, 65N15, 65N30, 65N50}
\keywords{anisotropic finite elements, polyhedral mesh, interpolation, error estimate, mesh adaptation}
\begin{document}

\begin{abstract}
New interpolation and quasi-interpolation operators of Cl\'ement- and Scott-Zhang-type are analyzed on anisotropic polygonal and polyhedral meshes. Since no reference element is available, an appropriate linear mapping to a reference configuration plays a crucial role. A priori error estimates are derived respecting the anisotropy of the discretization. Finally, the found estimates are employed to propose an adaptive mesh refinement based on bisection which leads to highly anisotropic and adapted discretizations with general element shapes in two- and three-dimensions.
\end{abstract}

\maketitle

\section{Introduction}
\label{sec:Introduction}
In nowadays computer simulations the use of highly adapted meshes for the treatment of partial differential equations is crucial in order to achieve accurate and efficient results. The adaptive Finite Element Method (FEM) is a well-founded and accepted strategy which reduces the computational cost while improving the accuracy of the approximation. When dealing with highly anisotropic solutions of boundary value problems, it is widely recognized that anisotropic mesh refinements have significant potential for improving the efficiency of the solution process.
Pioneering works for the analysis of Finite Element Methods on anisotropic meshes have been performed by Apel~\cite{Apel1999habil} as well as by Formaggia and Perotto~\cite{FormaggiaPerotto2001,FormaggiaPerotto2003}. 
The meshes usually consist of triangular and quadrilateral elements in two-dimension as well as on tetrahedral and hexahedral elements in three-dimension. 
First results on a posteriori error estimates for driving adaptive mesh refinement with anisotropic elements have been derived by Kunert~\cite{Kunert2000} for triangular and tetrahedral meshes.
For the mesh generation and adaptation different concepts are available which rely on metric-based strategies, see, e.g.,~\cite{Huang2006,AlauzetLoseille2011}, or on splitting of elements, see~\cite{Schneider2013} and the references therein. The anisotropic splitting of classical elements, however, results in certain restrictions why several authors combine this approach with additional strategies like edge swapping, node removal and local node movement. These restrictions come from the limited element shapes and the necessity to remove or handle hanging nodes in the discretization. For three-dimensional elements the situation is even more difficult. In order to relax the admissibility of the meshes one can apply discontinuous Galerkin (DG) methods, see~\cite{GeorgoulisHallHouston2007}, but consequently the conformity of the approximations is lost.

In recent years the attraction of polytopal meshes increased in the discretization of boundary value problems. These meshes consist of polygonal and polyhedral elements in two- and three-dimensions, respectively, and find their applications in polygonal FEM~\cite{SukumarTabarraei2004}, mimetic discretizations~\cite{BeiraoDaVeigaLipnikovManzini2014} as well as in  the BEM-based FEM~\cite{CopelandLangerPusch2009}, where BEM stands for Boundary Element Method, and the Virtual Element Method (VEM)~\cite{BeiraoDaVeigaBrezziCangianiManziniMariniRusso2012}.
One of the promising features is the high flexibility of the element shapes in the discretization. Since the elements may contain an arbitrary number of nodes on their boundary, the notion of ``hanging nodes'' is naturally included in most of the previously mentioned approaches.
A posteriori error estimates have been developed for the BEM-based FEM as well as for the VEM and they have been successfully applied in adaptive mesh refinement strategies, see~\cite{BeiraoDaVeigaManzini2015,BerroneBorio2017,CangianiGeorgoulisPryerSutton2017,Weisser2011,Weisser2017,WeisserWick2017}.

To the best of our knowledge, the polytopal elements have to fulfil some kind of isotropy in all previous publications, i.e., anisotropic elements, which are very thin and elongated, are explicitly excluded from the error analysis.
Since such anisotropic polytopal elements promise a high potential in the accurate resolution of sharp layers in the solutions of boundary value problems due to their enormous flexibility, we develop an appropriate framework in this article.
Geometric information is used in order to characterize the anisotropy of the elements and to give a definition of mesh regularity in a more general sense. In this article, we address the approximation space coming from the BEM-based FEM and the VEM in two- and three-dimensions, but the ideas are also applicable to polygonal FEM~\cite{SukumarTabarraei2004} with harmonic or other generalized barycentric coordinates, see~\cite{Floater2015,JoshiMeyerDeRoseSanocki2007} and the references therein. We study interpolation as well as quasi-interpolation operators and derive a~priori interpolation error estimates that can be applied in the analysis of BEM-based FEM and VEM after the use of C\'ea- or Strang-type lemmata. The derived estimates are further used to steer an anisotropic mesh refinement procedure in which polytopal elements are bisected successively. Numerical experiments demonstrate the flexibility and the potential for highly anisotropic polytopal discretizations.

The article is organized as follows: Section~\ref{sec:Discretization} introduces the approximation space and discusses the regularity as well as the properties of anisotropic polytopal meshes. In Section~\ref{sec:TraceInequalityAndBestApprox}, an anisotropic trace inequality is derived and best approximation results are proved. Quasi-interpolation operators of Cl\'ement- and Scott-Zhang-type are introduced and analyzed in Section~\ref{sec:QuasiInterpolation}. The derived framework is applied to pointwise interpolation in Section~\ref{sec:Interpolation}. Finally, numerical experiments are performed with a new anisotropic mesh refinement strategy in Section~\ref{sec:AdaptiveRefinement} and some conclusions are drawn in Section~\ref{sec:Conclusion}.

\section{Polytopal meshes and discretization}
\label{sec:Discretization}
Let $\Omega\subset\mathbb R^d$ be a bounded polytopal domain in two or three space dimensions and let $\mathcal K_h$ be a decomposition of $\Omega$ into non-overlapping polytopal elements, such that
\[
 \overline\Omega = \bigcup_{K\in\mathcal K_h} \overline K.
\]
For $d=2$, each polygonal element~$K$ consists of nodes and straight edges which are always situated between two nodes. In three space dimensions ($d=3$), the boundary~$\partial K$ of a polyhedral element~$K$ is formed by flat polygonal faces which are again framed by edges and nodes. In the context of polytopal meshes it is explicitly allowed that the dihedral angles of adjacent faces and the angles of neighbouring edges are equal to $\pi$. Thus, the notion of hanging nodes and edges in classical finite element methods is naturally included in polytopal meshes and does not result in any restrictions.

In order to treat the two- and three-dimensional case in the following simultaneously, we denote the $d-1$ dimensional objects, i.e.\ the edges ($d=2$) and the faces ($d=3$), by~$E$ and the set of all of them by~$\mathcal E_h$.
The nodes in the discretization are denoted by $\vec x_i$, $i=1,\ldots,N$, and the indices of the nodes belonging to $K\in\mathcal K_h$ and $E\in\mathcal E_h$ are given by the sets $I(K)$ and $I(E)$, respectively.

We make use of the usual space of square integrable functions $L_2(\omega)$ and the Sobolev Spaces $H^k(\omega)$, $k=1,2$ and denote their norms by $\|\cdot\|_{L_2(\omega)}$ and $\|\cdot\|_{H^k(\omega)}$, respectively, where $\omega\subset\overline\Omega$ is a $d$ or $d-1$ dimensional domain, see~\cite{Adams1975}. The inner product of $L_2(\omega)$ is written as $(\cdot,\cdot)_{L_2(\omega)}$ and the semi-norm in $H^k(\omega)$ as $|\cdot|_{H^k(\Omega)}$.

\subsection{Finite dimensional discretization of the function space}
\label{subsec:DiscretizationSpace}
The discrete function space considered in this publication originates from the BEM-based Finite Element Method~\cite{Weisser2011} and the Virtual Element Method~\cite{BeiraoDaVeigaBrezziCangianiManziniMariniRusso2012}. 
For $d=2$, we have
\[
 V_h = \left\{v\in H^1(\Omega): \Delta v\big|_K = 0 \;\forall K\in\mathcal K_h \mbox{ and } v\big|_E \mbox{ linear } \forall E\in\mathcal E_h\right\}.
\]
In the two-dimensional case the basis functions of $V_h$ are also known as generalized barycentric coordinates under the name harmonic coordinates, see~\cite{JoshiMeyerDeRoseSanocki2007}. This nodal basis can be constructed as
\begin{gather}
 -\Delta\psi_i = 0 \quad\mbox{in } K, \quad\forall K\in\mathcal K_h,\nonumber\\
 \psi_i(\vec x_j) = \delta_{ij} \quad\mbox{for } j=1,\ldots,N,\label{eq:BasisFunc2D}\\
 \psi_i \mbox{ linear on each edge}, \nonumber 
\end{gather}
for $i=1,\ldots,N$. Each basis function~$\psi_i$ is thus the solution of a local boundary value problem over each element~$K\in\mathcal K_h$. For $d=3$ this definition generalizes according to~\cite{RjasanowWeisser2014} to
\[
 V_h = \left\{v\in H^1(\Omega): \Delta v\big|_K = 0 \;\forall K\in\mathcal K_h \mbox{ and } v\big|_E\in V_h(E) \;\forall E\in\mathcal E_h\right\},
\]
where $V_h(E)$ denotes the two-dimensional discretization space over the face~$E$. The nodal basis functions are constructed as in~\eqref{eq:BasisFunc2D} but they have to fulfil additionally the Laplace equation in the linear parameter space of each face. Depending on the shapes of the elements and faces, the Laplace equations might be understood in a weak sense. Since $v\in H^1(K)$ locally and due to the continuity of $v$ across edges and faces for $v\in V_h$, the conformity $V_h\subset H^1(\Omega)$ follows. A further adaptation of the approximation space in three-dimensions can be found in~\cite{HofreitherLangerWeisser2016}. In order to achieve good approximation properties in $V_h$ the polytopal mesh and the elements in particular have to fulfil certain regularity assumptions.

\subsection{Characterisation of anisotropy and affine mapping}
\label{subsec:Characterisation}
Let $K\subset\mathbb R^d$, $d=2,3$ be a bounded polytopal element. Furthermore, we assume that $K$ is not degenerated, i.e.\ $|K|=\mathrm{meas}_d(K)>0$. Then, we define the center or mean of $K$ as
\[
  \bar{\vec x}_K = \frac{1}{|K|}\int_K \vec x\,d\vec x
\]
and the covariance matrix of $K$ as
\[
  \MCov(K) = \frac{1}{|K|}\int_K (\vec x-\bar{\vec x}_K)(\vec x-\bar{\vec x}_K)^\top\,d\vec x \in\mathbb R^{d\times d}.
\]
Obviously, $\MCov$ is real valued, symmetric and positive definite since $K$ is not degenerated. Therefore, it admits an eigenvalue decomposition
\[
  \MCov(K) = U_K\Lambda_KU_K^\top
\]
with
\[
  U^\top=U^{-1}
  \quad\mbox{and}\quad
  \Lambda_K=\diag(\lambda_{K,1},\ldots,\lambda_{K,d}).
\] 
Without loss of generality we assume that the eigenvalues fulfil $\lambda_{K,1}\geq\ldots\geq\lambda_{K,d}>0$ and that the eigenvectors $\vec u_{K,1},\ldots, \vec u_{K,d}$ collected in~$U$ are oriented in the same way for all considered elements $K\in\mathcal K_h$.

The eigenvectors of $\MCov(K)$ give the characteristic directions of $K$. This fact is, e.g., also used in the principal component analysis (PCA). The eigenvalue~$\lambda_{K,j}$ is the variance of the underlying data in the direction of the corresponding eigenvector~$\vec u_{K,j}$. Thus, the square root of the eigenvalues give the standard deviations in a statistical setting. Consequently, if 
\[
  \MCov(K) = cI
\]
for $c>0$, there are no dominant directions in the element $K$. We can characterise the anisotropy with the help of the quotient $\lambda_{K,1}/\lambda_{K,d}\geq 1$ and call an element 
\begin{align*}
  \text{isotropic, if}\quad & \frac{\lambda_{K,1}}{\lambda_{K,d}} \approx 1, \\
  \text{and anisotropic, if}\quad & \frac{\lambda_{K,1}}{\lambda_{K,d}} \gg 1.
\end{align*}
We might even characterise for $d=3$ whether the element is anisotropic in one or more directions by comparing the different combinations of eigenvalues.

Exploiting the spectral information of the polytopal elements, we next introduce a linear transformation of an anisotropic element $K$ onto a kind of reference element $\widehat{K}$. For each $\vec x\in K$, we define the mapping by
\begin{equation}\label{eq:trafo}
  \vec x \mapsto \widehat{\vec x} = F_K(\vec x) = A_K\vec x
  \quad\mbox{ with }\quad
  A_K=\alpha_K\Lambda_K^{-1/2}U_K^\top,
\end{equation}
and $\alpha>0$, which will be chosen later. $\widehat{K}=F_K(K)$ is called reference configuration later on.
\begin{lemma}\label{lem:PropTransform}
  Under the above transformation, it holds
  \begin{enumerate}
    \item $|\widehat K| = |K|\,|\det(A_K)| = \alpha^d|K|/\sqrt{\prod_{j=1}^d\lambda_{K,j}}$,
    \item $\bar{\vec x}_{\widehat{K}} = F_K(\bar{\vec x}_K)$,
    \item $\MCov(\widehat K) = \alpha^2I$.
  \end{enumerate}
\end{lemma}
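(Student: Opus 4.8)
The plan is to derive all three identities from the transformation rule for the $d$-dimensional Lebesgue measure under the invertible linear map $F_K$, together with the eigendecomposition $\MCov(K)=U_K\Lambda_KU_K^\top$; since no estimates are involved, the argument is essentially bookkeeping.

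First I would prove (i). Because $F_K\colon\vec x\mapsto A_K\vec x$ is an invertible linear map, the image $\widehat K=F_K(K)$ is again a bounded non-degenerate polytope, and the change-of-variables formula gives $|\widehat K|=|K|\,|\det(A_K)|$. It then remains to evaluate the determinant: from $A_K=\alpha\Lambda_K^{-1/2}U_K^\top$ and the multiplicativity of $\det$ one gets $\det(A_K)=\alpha^d\det(\Lambda_K^{-1/2})\det(U_K^\top)$, where $\det(U_K^\top)=\pm1$ since $U_K$ is orthogonal and $\det(\Lambda_K^{-1/2})=\prod_{j=1}^d\lambda_{K,j}^{-1/2}$ since $\Lambda_K$ is diagonal with positive diagonal; taking absolute values yields the stated formula.

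Next, for (ii) I would insert the definition of the center of $\widehat K$ and substitute $\widehat{\vec x}=A_K\vec x$, $d\widehat{\vec x}=|\det(A_K)|\,d\vec x$ and $\widehat K=A_KK$, so that
\[
  \bar{\vec x}_{\widehat K}
  =\frac{1}{|\widehat K|}\int_{\widehat K}\widehat{\vec x}\,d\widehat{\vec x}
  =\frac{|\det(A_K)|}{|\widehat K|}\,A_K\int_K\vec x\,d\vec x
  =\frac{|\det(A_K)|\,|K|}{|\widehat K|}\,A_K\bar{\vec x}_K ,
\]
and the scalar prefactor equals $1$ by (i); hence $\bar{\vec x}_{\widehat K}=A_K\bar{\vec x}_K=F_K(\bar{\vec x}_K)$.

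Finally, for (iii) I would proceed analogously for the covariance matrix. By (ii) the centred coordinate transforms linearly, $\widehat{\vec x}-\bar{\vec x}_{\widehat K}=A_K(\vec x-\bar{\vec x}_K)$, so the same substitution together with (i) gives
\[
  \MCov(\widehat K)
  =\frac{|\det(A_K)|}{|\widehat K|}\,A_K\Bigl(\int_K(\vec x-\bar{\vec x}_K)(\vec x-\bar{\vec x}_K)^\top d\vec x\Bigr)A_K^\top
  =A_K\,\MCov(K)\,A_K^\top .
\]
Substituting $A_K=\alpha\Lambda_K^{-1/2}U_K^\top$ and $\MCov(K)=U_K\Lambda_KU_K^\top$, and cancelling $U_K^\top U_K=I$ and $\Lambda_K^{-1/2}\Lambda_K\Lambda_K^{-1/2}=I$, reduces the right-hand side to $\alpha^2I$. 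I do not expect a genuine obstacle here; the only points that need a moment of care are checking that $\widehat K$ is non-degenerate so that $\bar{\vec x}_{\widehat K}$ and $\MCov(\widehat K)$ are well defined, and carrying the Jacobian factor $|\det(A_K)|$ through consistently so that it cancels against $|\widehat K|$ by means of (i).
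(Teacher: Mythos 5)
Your proposal is correct and follows essentially the same route as the paper: change of variables for the volume with $\det(A_K)=\alpha^d/\sqrt{\prod_j\lambda_{K,j}}$, linear transformation of the center integral, and the identity $\MCov(\widehat K)=A_K\MCov(K)A_K^\top$ collapsed to $\alpha^2 I$ via the eigendecomposition. The only (minor) addition is your explicit remark that the prefactor $|\det(A_K)|\,|K|/|\widehat K|$ equals one by (i), which the paper uses implicitly.
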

\begin{proof}
  First, we recognize that
  \begin{eqnarray*}
    \det(A_K) = \alpha^d \det(\Lambda_K^{-1/2}U_K^\top)
    = \alpha^d /\sqrt{\det(\Lambda_K)}
    = \alpha^d /\sqrt{\textstyle\prod_{j=1}^d\lambda_{K,j}}.
  \end{eqnarray*}
  Consequently, we obtain by the transformation
  \begin{eqnarray*}
   |\widehat K| = \int_{\widehat K}d\widehat{\vec x}
    = |K|\, |\det(A_K)|
    = \alpha^d|K|/\sqrt{\det(\MCov(K))},
  \end{eqnarray*}
  that proves the first statement. For the center, we have
  \begin{equation*}
    \bar{\vec x}_{\widehat K} = \frac{1}{|\widehat K|}\int_{\widehat K}\widehat{\vec x}\,d\widehat{\vec x}
    = \frac{|\det(A_K)|}{|\widehat K|}\, A_K\int_{K}\vec x\,d\vec x
    = A_K\bar{\vec x}_K.
  \end{equation*}
  The covariance matrix has the form
  \begin{eqnarray*}
    \MCov(\widehat K) 
    &=& \frac{1}{|\widehat K|}\int_{\widehat K} (\widehat{\vec x}-\bar{\vec x}_{\widehat K})(\widehat{\vec x}-\bar{\vec x}_{\widehat K})^\top\,d\widehat{\vec x}\\
    &=& \frac{|\det(A_K)|}{|\widehat K|}\int_K A_K(\vec x-\bar{\vec x}_K)\,(A_K(\vec x-\bar{\vec x}_K))^\top\,d\vec x\\
    &=& A_K\MCov(K)A_K^\top\\
    &=& \alpha^2(\Lambda_K^{-1/2}U_K^\top)(U_K\Lambda_KU_K^\top)(\Lambda_K^{-1/2}U_K^\top)^\top\\
    &=& \alpha^2\,I,
  \end{eqnarray*}
  that finishes the proof.
\end{proof}

According to the previous lemma, the reference configuration $\widehat K$ is isotropic, since $\lambda_{\widehat K,1}/\lambda_{\widehat K,d}=1$, and thus, it has no dominant direction. We can still choose the parameter $\alpha$ in the mapping. We might use $\alpha=1$ such that the variance of the element in every direction is equal to one. On the other hand, we can use the parameter $\alpha$ in order to normalise the volume of $\widehat K$ such that $|\widehat K|=1$. This is achieved by
\begin{equation}\label{eq:Alpha}
  \alpha_K = \left(\frac{\sqrt{\det(\MCov(K))}}{|K|}\right)^{1/d}
  = \left(\frac{\sqrt{\prod_{j=1}^d\lambda_{K,j}}}{|K|}\right)^{1/d},
\end{equation}
see Lemma~\ref{lem:PropTransform}, and will be used in the rest of the paper.

\begin{example}
The transformation~\eqref{eq:trafo} for $\alpha$ according to~\eqref{eq:Alpha} is demonstrated for an anisotropic element $K\subset\mathbb R^2$, i.e. $d=2$. The element $K$ is depicted in Fig.~\ref{fig:Transform} (left). The eigenvalues of $\MCov(K)$ are
\[
  \lambda_{K,1} \approx 3.37\cdot10^{1}
  \quad\mbox{and}\quad
  \lambda_{K,2} \approx 1.69\cdot10^{-1},
\]
and thus
\[
  \frac{\lambda_{K,1}}{\lambda_{K,2}} \approx 1.99\cdot10^2 \gg 1.
\]
In Fig.~\ref{fig:Transform}, we additionally visualize the eigenvectors of $\MCov(K)$ scaled by the square root of their corresponding eigenvalue and centered at the mean of the element. The ellipse is the one given uniquely by the scaled vectors. In the right picture of Fig.~\ref{fig:Transform}, the transformed element $\widehat K=F_K(K)$ is given with the scaled eigenvectors of its covariant matrix $\MCov(\widehat K)$. The computation verify $|\widehat K|=1$, and we have
\[
\MCov(\widehat K) \approx
\begin{pmatrix}
	8.68\cdot10^{- 2} &  5.49\cdot10^{-16}\\
	5.49\cdot10^{-16} &  8.68\cdot10^{- 2}\\
\end{pmatrix}.
\]
\begin{figure}[tbp]
 {\small
 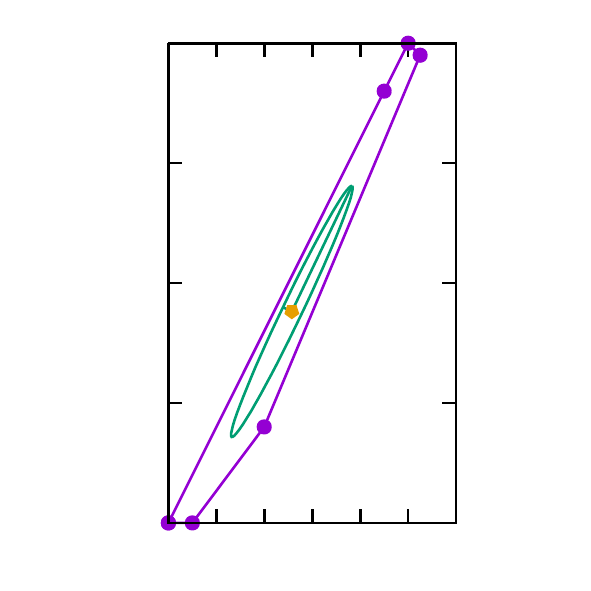\hfil
 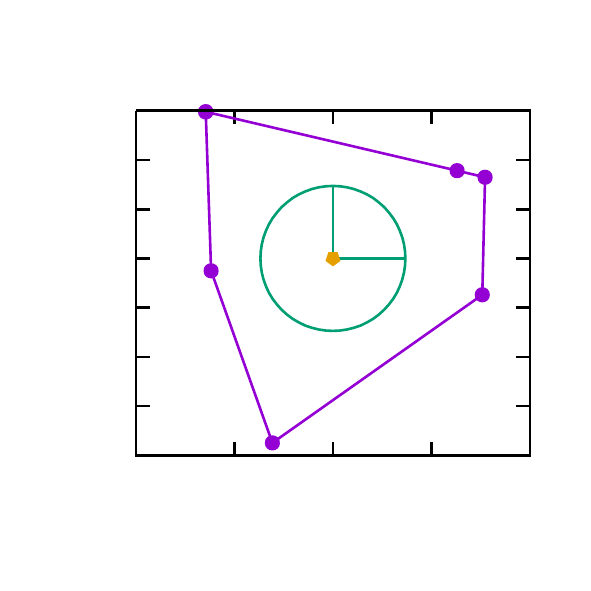
 }
 \caption{Demonstration of transformation~\eqref{eq:trafo}: original anisotropic element (left) and transformed element centered at the origin (right)}
 \label{fig:Transform}
\end{figure}
\end{example}

\subsection{Regular isotropic and anisotropic polytopal meshes}
\label{subsec:RegularMeshes}
In view of the quasi-interpolation and interpolation operators and their approximation properties, we state the mesh requirements for their analysis. For the regularity of usual isotropic meshes we refer to~\cite{RjasanowWeisser2014,Weisser2014}. However, these assumptions are rather standard, see also~\cite{BeiraoDaVeigaBrezziCangianiManziniMariniRusso2012}.

\begin{definition}[\bf regular (isotropic) mesh]\label{def:reg_isotropic_mesh}
Let $\mathcal K_h$ be a polytopal mesh. $\mathcal K_h$ is called \emph{regular} or a \emph{regular isotropic mesh}, if all elements $K\in\mathcal K_h$ fulfil:
\begin{enumerate}
\item\label{item:reg_iso_mesh:1} $K$ is a star-shaped polygon/polyhedron with respect to a circle/ball of radius~$\rho_K$ and midpoint~$\vec z_K$.
 \item\label{item:reg_iso_mesh:2} Their aspect ratio is uniformly bounded from above by $\sigma_\mathcal{K}$, i.e. $h_K/\rho_K<\sigma_\mathcal{K}$.
 \item\label{item:reg_iso_mesh:3} For the element $K$ and all its edges $e\subset\partial K$ it holds $h_K\leq c_\mathcal{K}|e|$, where $|e|$ is the edge length.
 \item\label{item:reg_iso_mesh:4} In the case $d=3$, all polygonal faces $E\subset\partial K$ of the polyhedral element~$K$ are star-shaped with respect to a circle of radius~$\rho_E$ and midpoint~$\vec z_E$ and their aspect ratio is uniformly bounded, i.e.\ $h_E/\rho_E<\sigma_{\mathcal E}$.
\end{enumerate}
\end{definition}
In~\cite{Weisser2014}, it has been shown that under these assumptions, the triangulation of a regular polygon~$K$, see Fig.~\ref{fig:auxTria}, obtained by connecting its nodes with the point~$\vec z_K$ is shape-regular in the sense of Ciarlet. The same holds for regular polyhedral elements. A discretization into tetrahedra is constructed by connecting the nodes of each face~$E\subset\partial K$ with $\vec z_E$, see Fig.~\ref{fig:auxTria}, and by connecting the vertices of the obtained triangles on $\partial K$ with the midpoint~$\vec z_K$. This tetrahedral decomposition of~$K$ is shape-regular in the usual sense, see~\cite{RjasanowWeisser2014}. Furthermore, it can be shown that the number of nodes on the boundary of~$K$ is uniformly bounded, cf.~\cite{Weisser2017}. Consequently, the number of simplices in the auxiliary triangulation into triangles ($d=2$) and tetrahedra ($d=3$) is also uniformly bounded.
\begin{figure}[tbp]
 \input{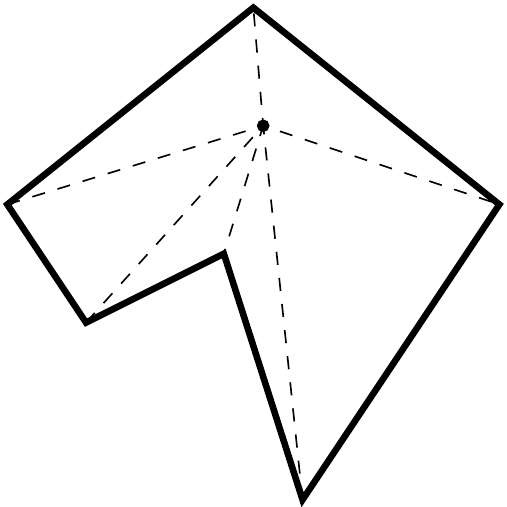_t}\hfil
 \includegraphics[trim=4cm 2cm 4cm 4cm, width=0.35\textwidth, clip]{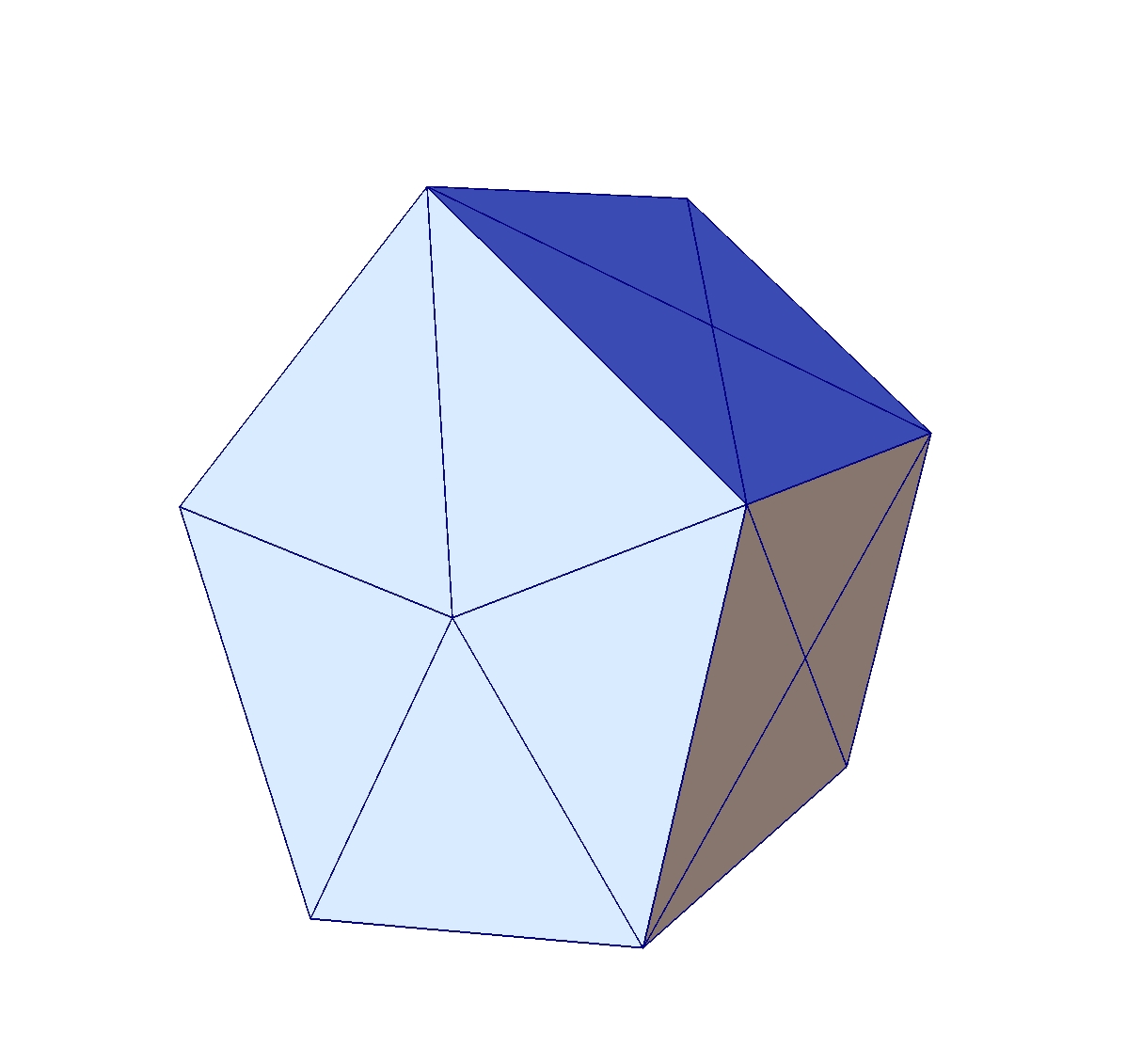}
 \caption{Auxiliary triangulation of regular element~$K$ for $d=2$ into triangles (left) and for $d=3$ into tetrahedra (right)}
 \label{fig:auxTria}
\end{figure}

In the definition of regular anisotropic meshes, we make use of the previously introduced reference configuration.
\begin{definition}[\bf regular anisotropic mesh]\label{def:reg_anisotropic_mesh}
 Let $\mathcal K_h$ be a polytopal mesh with anisotropic elements. $\mathcal K_h$ is called a \emph{regular anisotropic mesh}, if
 \begin{enumerate}
  \item The reference configuration $\widehat K$ for all $K\in\mathcal K_h$ obtained by~\eqref{eq:trafo} is a regular polytopal element according to Definition~\ref{def:reg_isotropic_mesh}.
  \item Neighbouring elements behave similarly in their anisotropy. More precisely, for two neighbouring elements $K_1$ and $K_2$, i.e. $\overline K_1\cap\overline K_2\neq\varnothing$, with covariance matrices 
   \[
     M_\mathrm{Cov}(K_1) = U_{K_1}\Lambda_{K_1}U_{K_1}^\top
     \quad\mbox{and}\quad
     M_\mathrm{Cov}(K_2) = U_{K_2}\Lambda_{K_2}U_{K_2}^\top
   \] 
   as defined above, we can write
   \[
     \Lambda_{K_2} = (I+\Delta^{K_1,K_2})\Lambda_{K_1}
     \quad\mbox{and}\quad
     U_{K_2} = R^{K_1,K_2}U_{K_1}
   \]
   with
   \[
     \Delta^{K_1,K_2} = \diag\left(\delta^{K_1,K_2}_j:j=1,\ldots,d\right),
   \]
   and a rotation matrix $R^{K_1,K_2}\in\mathbb R^{d\times d}$
   such that for $j=1,\ldots,d$
   \[
     0 \leq |\delta^{K_1,K_2}_j| < c_\delta < 1
     \quad\mbox{and}\quad
     0 \leq \|R^{K_1,K_2}-I\|_2\left(\frac{\lambda_{K_1,1}}{\lambda_{K_1,d}}\right)^{1/2} < c_R
   \]
   uniformly for all neighbouring elements, where $\|\cdot\|_2$ denotes the spectral norm.
 \end{enumerate}
\end{definition}
In the rest of the paper, $c$ denotes a generic constant which depends on the regularity parameters of the mesh ($\sigma_{\mathcal K}$, $c_{\mathcal K}$, $\sigma_{\mathcal E}$, $c_\delta$, $c_R$) and the space dimension~$d$.
\begin{remark}
For $d=2$, the rotation matrix has the form
\[
     R^{K_1,K_2} = \begin{pmatrix}
                     \cos\phi^{K_1,K_2} & -\sin\phi^{K_1,K_2}\\
                     \sin\phi^{K_1,K_2} &  \cos\phi^{K_1,K_2}
                   \end{pmatrix},
\]
with an angle~$\phi^{K_1,K_2}$. For the spectral norm $\|R^{K_1,K_2}-I\|_2$, we recognize that
\[
 (R^{K_1,K_2}-I)^\top (R^{K_1,K_2}-I)
 = \left(\sin^2\phi^{K_1,K_2} + (1-\cos\phi^{K_1,K_2})^2\right)I,
\]
and consequently
\begin{eqnarray*}
 \|R^{K_1,K_2}-I\|_2
 & = & \left(\sin^2\phi^{K_1,K_2} + (1-\cos\phi^{K_1,K_2})^2\right)^{1/2} \\
 & = & 2\left|\sin\left(\frac{\phi^{K_1,K_2}}{2}\right)-\sin(0)\right|\\
 & \leq & |\phi^{K_1,K_2}|,
\end{eqnarray*}
according to the mean value theorem. The assumption on the spectral norm in Definition~\ref{def:reg_anisotropic_mesh} can thus be replaced by
\[
  |\phi^{K_1,K_2}|\left(\frac{\lambda_{K_1,1}}{\lambda_{K_1,2}}\right)^{1/2} < c_\phi.
\]
This implies that neighbouring highly anisotropic elements has to be aligned in almost the same directions, whereas isotropic or moderately anisotropic elements might vary in their characteristic directions locally.
\end{remark}

Let us study the reference configuration~$\widehat K\subset\mathbb R^d$, $d=2,3$ of $K\in\mathcal K_h$, which is regular. Due to the scaling with $\alpha_K$, it is $|\widehat K|=1$ and we obtain
\[
  1 = |\widehat K| 
  \leq h_{\widehat K}^d
  \leq \sigma_{\mathcal K}^d\rho_{\widehat K}^d
  =    \sigma_{\mathcal K}^d\;\nu\pi\rho_{\widehat K}^d \;/\;(\nu\pi)
  \leq \sigma_{\mathcal K}^d\;|\widehat K| \;/\;(\nu\pi)
  =    \sigma_{\mathcal K}^d\;/\;(\nu\pi),
\]
where $\nu = 1$ for $d=2$ and $\nu=4/3$ for $d=3$, since the circle/ball is inscribed the element~$\widehat K$.
Consequently, we obtain
\begin{equation}\label{eq:BoundForRefDiameter}
  1 \leq h_{\widehat K} \leq \frac{\sigma_{\mathcal K}}{(\nu\pi)^{1/d}}.
\end{equation}
Furthermore, for $d=3$, let $\widehat E$ be a face of~$\widehat K$ and denote by $\widehat e$ one of its edges $\widehat e\subset\partial\widehat E$. Due to the regularity, we find
\[
  |\widehat E| 
  \geq \pi\rho_{\widehat E}^2
  \geq \pi h_{\widehat E}^2/\sigma_{\mathcal E}^2
  \geq \pi h_{\widehat e}^2/\sigma_{\mathcal E}^2
  \geq \pi h_{\widehat K}^2/(c_{\mathcal K}\sigma_{\mathcal E}^2),
\]
and thus for $d=2,3$
\begin{equation}\label{eq:BoundForRefDiameterDm1}
  h_{\widehat K}^{d-1} \leq c|\widehat E|.
\end{equation}

A regular anisotropic element can be mapped according to the previous definition onto a regular polytopal element in the usual sense. In the definition of quasi-interpolation operators, we deal, however, with patches of elements instead of single elements. Thus, we study the mapping of such patches. 
Let $\omega_i$ be the neighbourhood of the node $\vec x_i$ which is defined by
\[
  \overline\omega_i = \bigcup\left\{\overline{K'}: \vec x_i\in\overline{K'},\quad K'\in\mathcal K_h\right\}.
\]
The neighbourhood~$\omega_i$ is also described by
\[
  \overline\omega_i = \operatorname{supp}\psi_i,
\]
where $\psi_i$ is the nodal basis function in $V_h$ corresponding to~$\vec x_i$.
Furthermore, the neighbourhoods~$\omega_E$ and~$\omega_K$ of edges/faces~$E$ and elements~$K$ are considered. They are given by
\[
  \overline\omega_E = \bigcup_{i\in I(E)}\overline{\omega_i}
  \qquad\mbox{and}\qquad
  \overline\omega_K = \bigcup_{i\in I(K)}\overline{\omega_i}.
\]
\begin{lemma}\label{lem:regPerturbedMapping}
Let~$\mathcal K_h$ be a regular anisotropic mesh, $\omega_i$ be a patch as described above, and $K_1,K_2\in\mathcal K_h$ with $K_1,K_2\subset\omega_i$. The mapped element $F_{K_1}(K_2)$ is regular in the sense of Definition~\ref{def:reg_isotropic_mesh} with slightly perturbed regularity parameters $\widetilde\sigma_\mathcal K$ and $\widetilde c_\mathcal K$ depending only on the regularity of~$\mathcal K_h$. Consequently, the mapped patch $F_K(\omega_i)$ consists of regular polytopal elements for all $K\in\mathcal K_h$ with $K\subset\omega_i$. 
\end{lemma}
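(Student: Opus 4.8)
The plan is to realise $F_{K_1}(K_2)$ as the image of the reference configuration $\widehat K_2=F_{K_2}(K_2)$ — which is already regular — under a single linear map, and then to show that this map has a uniformly bounded condition number; regularity in the sense of Definition~\ref{def:reg_isotropic_mesh} is then inherited with perturbed constants.

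First I would note that, since $K_1,K_2\subset\omega_i$ both contain the node $\vec x_i$, we have $\overline{K_1}\cap\overline{K_2}\neq\varnothing$, so $K_1$ and $K_2$ are neighbouring elements and the estimates in Definition~\ref{def:reg_anisotropic_mesh} apply. With the matrices $A_{K_1},A_{K_2}$ from~\eqref{eq:trafo} one has $F_{K_1}(K_2)=B\,\widehat K_2$ where $B=A_{K_1}A_{K_2}^{-1}$, and $\widehat K_2$ is a regular polytopal element by the first condition of Definition~\ref{def:reg_anisotropic_mesh}. Inserting $\Lambda_{K_2}=(I+\Delta^{K_1,K_2})\Lambda_{K_1}$, $U_{K_2}=R^{K_1,K_2}U_{K_1}$ and using that diagonal matrices commute, a short computation gives
\[
  B = \frac{\alpha_{K_1}}{\alpha_{K_2}}\,\Lambda_{K_1}^{-1/2}\bigl(U_{K_1}^\top R^{K_1,K_2}U_{K_1}\bigr)\Lambda_{K_1}^{1/2}\,(I+\Delta^{K_1,K_2})^{1/2}
    = \frac{\alpha_{K_1}}{\alpha_{K_2}}\,(I+S)\,(I+\Delta^{K_1,K_2})^{1/2},
\]
where $S=\Lambda_{K_1}^{-1/2}\bigl(U_{K_1}^\top (R^{K_1,K_2}-I)U_{K_1}\bigr)\Lambda_{K_1}^{1/2}$. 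The scalar prefactor $\alpha_{K_1}/\alpha_{K_2}$ is harmless, since it cancels in every ratio occurring in the regularity conditions.

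The heart of the argument is the estimate for $S$. By orthogonal invariance of the spectral norm, $\|U_{K_1}^\top(R^{K_1,K_2}-I)U_{K_1}\|_2=\|R^{K_1,K_2}-I\|_2$, and since $\Lambda_{K_1}=\diag(\lambda_{K_1,1},\ldots,\lambda_{K_1,d})$ with $\lambda_{K_1,1}\geq\ldots\geq\lambda_{K_1,d}>0$,
\[
  \|S\|_2 \le \|\Lambda_{K_1}^{-1/2}\|_2\,\|R^{K_1,K_2}-I\|_2\,\|\Lambda_{K_1}^{1/2}\|_2
  = \Bigl(\frac{\lambda_{K_1,1}}{\lambda_{K_1,d}}\Bigr)^{1/2}\|R^{K_1,K_2}-I\|_2 < c_R,
\]
which is exactly the anisotropically weighted quantity controlled in Definition~\ref{def:reg_anisotropic_mesh}; this is the single place where the near-alignment of neighbouring highly anisotropic elements is used. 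Combined with $|\delta^{K_1,K_2}_j|<c_\delta<1$, so that the diagonal factor $(I+\Delta^{K_1,K_2})^{1/2}$ has all singular values in $(\sqrt{1-c_\delta},\sqrt{1+c_\delta})$, the largest and smallest singular values of $B$ satisfy
\[
  \sigma_{\max}(B) \le \frac{\alpha_{K_1}}{\alpha_{K_2}}(1+c_R)\sqrt{1+c_\delta},
  \qquad
  \sigma_{\min}(B) \ge \frac{\alpha_{K_1}}{\alpha_{K_2}}(1-c_R)\sqrt{1-c_\delta},
\]
so the condition number $\sigma_{\max}(B)/\sigma_{\min}(B)$ is bounded by a constant $c$ depending only on $c_R$ and $c_\delta$.

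It then remains to check that the three (for $d=3$, four) conditions of Definition~\ref{def:reg_isotropic_mesh} survive the invertible affine map $B$. Since $B$ is linear and bijective, $F_{K_1}(K_2)=B\widehat K_2$ is again a polytopal element with straight edges and flat faces; and if $\widehat K_2$ is star-shaped with respect to a ball of radius $\rho$ centered at $\vec z$, then $B\widehat K_2$ is star-shaped with respect to the ellipsoid $\{B\vec y:\|\vec y-\vec z\|\le\rho\}$, which contains the ball of radius $\sigma_{\min}(B)\rho$ centered at $B\vec z$. Every quantity appearing in conditions \ref{item:reg_iso_mesh:2}--\ref{item:reg_iso_mesh:4} is a ratio of a diameter to a radius or to an edge length: under $B$ a diameter grows by at most the factor $\sigma_{\max}(B)$ and a radius or edge length shrinks by at most the factor $\sigma_{\min}(B)$ — for a planar face this uses that the singular values of $B$ restricted to any plane still lie in $[\sigma_{\min}(B),\sigma_{\max}(B)]$ — so each such ratio picks up at most the factor $\sigma_{\max}(B)/\sigma_{\min}(B)\le c$, and the scalar $\alpha_{K_1}/\alpha_{K_2}$ cancels. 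Hence $F_{K_1}(K_2)$ is regular with $\widetilde\sigma_{\mathcal K}=c\,\sigma_{\mathcal K}$, $\widetilde c_{\mathcal K}=c\,c_{\mathcal K}$ (and a perturbed face parameter $c\,\sigma_{\mathcal E}$ for $d=3$). Applying this with $K_1=K$ and $K_2$ ranging over all elements of $\omega_i$, and using that $F_K$ maps the non-overlapping decomposition of $\omega_i$ onto a non-overlapping decomposition of $F_K(\omega_i)$, yields the final assertion. I expect the only real obstacle to be the spectral estimate for $S$: one must avoid bounding $\Lambda_{K_1}^{-1/2}(U_{K_1}^\top R^{K_1,K_2}U_{K_1})\Lambda_{K_1}^{1/2}$ crudely (its condition number alone would be $\lambda_{K_1,1}/\lambda_{K_1,d}$, which is unbounded) and instead exploit the cancellation that turns it into $I+S$ with $\|S\|_2<c_R$.
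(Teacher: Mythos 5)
Your proof follows essentially the same route as the paper's: the decisive step in both is the cancellation that rewrites $\Lambda_{K_1}^{-1/2}U_{K_1}^\top R^{K_1,K_2}U_{K_1}\Lambda_{K_1}^{1/2}$ as $I+S$ with $\|S\|_2\le(\lambda_{K_1,1}/\lambda_{K_1,d})^{1/2}\|R^{K_1,K_2}-I\|_2<c_R$, combined with the uniform bounds on $(I+\Delta^{K_1,K_2})^{\pm1/2}$; your packaging of this as a condition-number bound for the single matrix $B=A_{K_1}A_{K_2}^{-1}$ is somewhat tidier than the paper's separate estimates for $\rho_{\widetilde K_2}$, $h_{\widetilde K_2}$ and the edge lengths, and you are more explicit about the face condition for $d=3$, which the paper leaves implicit. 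One step needs repair, though: your lower bound $\sigma_{\min}(I+S)\ge 1-\|S\|_2>1-c_R$ is only useful if $c_R<1$, and Definition~\ref{def:reg_anisotropic_mesh} imposes no such restriction (only $c_\delta<1$ is required), so as written your bound on $\sigma_{\min}(B)$ may be nonpositive. The paper sidesteps this by estimating the norm of the \emph{inverse} map $F_{K_2}\circ F_{K_1}^{-1}=B^{-1}$ from above: $B^{-1}$ has exactly the same structure with $R^{K_1,K_2}$ replaced by its transpose, so $\|B^{-1}\|_2\le\frac{\alpha_{K_2}}{\alpha_{K_1}}(1-c_\delta)^{-1/2}(1+c_R)$ and hence $\sigma_{\min}(B)=1/\|B^{-1}\|_2\ge\frac{\alpha_{K_1}}{\alpha_{K_2}}(1-c_\delta)^{1/2}(1+c_R)^{-1}>0$ for any $c_R$; this yields the paper's constant $\widetilde\sigma_{\mathcal K}=\sqrt{(1+c_\delta)/(1-c_\delta)}\,(1+c_R)^2\,\sigma_{\mathcal K}$. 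With that one-line adjustment your argument is complete and coincides in substance with the paper's proof.
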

\begin{figure}[tbp]
 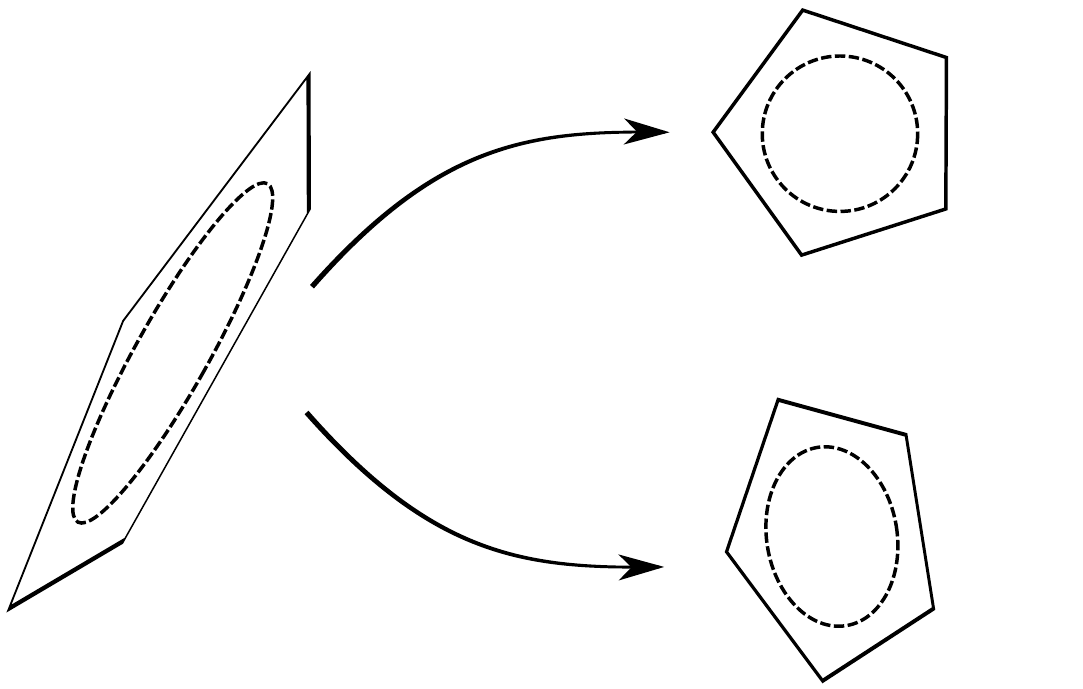
 \caption{Anisotropic element~$K_2$ with mapped regular element~$\widehat K_2$ and perturbed mapped element $\widetilde K_2=F_{K_1}(K_2)$}
 \label{fig:TransNeighbour}
\end{figure}
\begin{proof}
We verify Definition~\ref{def:reg_isotropic_mesh} for the mapped element $\widetilde K_2=F_{K_1}(K_2)$.

First, we address \ref{item:reg_iso_mesh:1} of Definition~\ref{def:reg_isotropic_mesh}.
$\widehat K_2=F_{K_2}(K_2)$ is regular and thus, star-shaped with respect to a circle/ball~$\widehat B$. If we transform $\widehat K_2$ into $\widetilde K_2$ with the mapping $F_{K_1}\circ F_{K_2}^{-1}$, see Fig.~\ref{fig:TransNeighbour}, the circle/ball $\widehat B$ is transformed into an ellipse/ellipsoid $\widetilde B=F_{K_1}\circ F_{K_2}^{-1}(\widehat B)$. Since the transformations are linear, the element $\widetilde K_2$ is star-shaped with respect to the ellipse/ellipsoid~$\widetilde B$ and in particular with respect to the circle/ball inscribed~$\widetilde B$.

Next, we address \ref{item:reg_iso_mesh:2} of Definition~\ref{def:reg_isotropic_mesh}
and we bound the aspect ratio. The radius~$\rho_{\widetilde K_2}$ of the inscribed circle/ball as above is equal to the smallest semi-axis of the ellipse/ellipsoid~$\widetilde B$. Let $\widetilde{\vec x}_1$ and $\widetilde{\vec x}_2$ be the intersection of $\widetilde B$ and the inscribed circle/ball. Thus, we obtain
\begin{eqnarray*}
 2\rho_{\widehat K_2} 
 & = & |F_{K_2}\circ F_{K_1}^{-1}(\widetilde{\vec x}_1-\widetilde{\vec x}_2)|\\
 & = & \left|\alpha_{K_2}\Lambda_{K_2}^{-1/2}U_{K_2}^\top\frac{1}{\alpha_{K_1}}U_{K_1}\Lambda_{K_1}^{1/2} (\widetilde{\vec x}_1-\widetilde{\vec x}_2)\right|\\
 & = & \frac{\alpha_{K_2}}{\alpha_{K_1}}\left|\Lambda_{K_1}^{-1/2}(I+\Delta^{K_1,K_2})^{-1/2}U_{K_1}^\top\left(R^{K_1,K_2}\right)^\top U_{K_1}\Lambda_{K_1}^{1/2} (\widetilde{\vec x}_1-\widetilde{\vec x}_2)\right|\\
 & = & \frac{\alpha_{K_2}}{\alpha_{K_1}}\left|(I+\Delta^{K_1,K_2})^{-1/2}\left(\Lambda_{K_1}^{-1/2}U_{K_1}^\top\left(R^{K_1,K_2}-I\right)^\top U_{K_1}\Lambda_{K_1}^{1/2}+I\right) (\widetilde{\vec x}_1-\widetilde{\vec x}_2)\right|\\
 & \leq & \frac{\alpha_{K_2}}{\alpha_{K_1}}\left\|(I+\Delta^{K_1,K_2})^{-1/2}\right\|_2\left(\|\Lambda_{K_1}^{-1/2}\|_2\|R^{K_1,K_2}-I\|_2 \|\Lambda_{K_1}^{1/2}\|_2+1\right) 2\rho_{\widetilde K_2}\\
 & = & \frac{\alpha_{K_2}}{\alpha_{K_1}}\max_{j=1,\ldots,d}\left\{(1+\delta_j^{K_1,K_2})^{-1/2}\right\}\left(1+\left(\frac{\lambda_{K_1,1}}{\lambda_{K_1,d}}\right)^{1/2}\|R^{K_1,K_2}-I\|_2\right) 2\rho_{\widetilde K_2},
\end{eqnarray*}
since the spectral norm~$\|\cdot\|_2$ is invariant under rotations, and $U_{K_1}$ is such a rotation.
With similar arguments, we can bound $h_{\widetilde K_2}$. Therefore, let $\widetilde{\vec x}_1, \widetilde{\vec x}_2\in\partial\widetilde K_2$ be such that $h_{\widetilde K_2}=|\widetilde{\vec x}_1-\widetilde{\vec x}_2|$ and $\widehat{\vec x}_i=F_{K_2}\circ F_{K_1}^{-1}(\widetilde{\vec x}_i)\in\partial\widehat K_2$, $i=1,2$. With similar considerations as above, we obtain
\begin{eqnarray*}
 h_{\widetilde K_2} 
 &   =  & |F_{K_1}\circ F_{K_2}^{-1}(\widehat{\vec x}_1-\widehat{\vec x}_2)|\\
 & \leq & \frac{\alpha_{K_1}}{\alpha_{K_2}}\max_{j=1,\ldots,d}\left\{(1+\delta_j^{K_1,K_2})^{1/2}\right\}\left(1+\left(\frac{\lambda_{K_1,1}}{\lambda_{K_1,d}}\right)^{1/2}\|R^{K_1,K_2}-I\|_2\right) h_{\widehat K_2}.
\end{eqnarray*}
Exploiting the last two estimates yields
\begin{eqnarray*}
 \frac{h_{\widetilde K_2}}{\rho_{\widetilde K_2}} 
 & \leq & \frac{\max_{j=1,\ldots,d}\sqrt{1+\delta_j^{K_1,K_2}}}{\min_{j=1,\ldots,d}\sqrt{1+\delta_j^{K_1,K_2}}} \left(1+\left(\frac{\lambda_{K_1,1}}{\lambda_{K_1,d}}\right)^{1/2}\|R^{K_1,K_2}-I\|_2\right)^2 \frac{h_{\widehat K_2}}{\rho_{\widehat K_2}} \\
 & \leq & \sqrt{\frac{1+c_\delta}{1-c_\delta}}\;(1+c_R)^2\; \frac{h_{\widehat K_2}}{\rho_{\widehat K_2}} 
   \leq   \sqrt{\frac{1+c_\delta}{1-c_\delta}}\;(1+c_R)^2\; \sigma_{\mathcal K}
   =      \widetilde\sigma_{\mathcal K}
\end{eqnarray*}
Obviously, the aspect ratio is uniformly bounded from above by a perturbed regularity parameter~$\widetilde\sigma_{\mathcal K}$.

Finally we address~\ref{item:reg_iso_mesh:3} of Definition~\ref{def:reg_isotropic_mesh}. 
Let $\widetilde e$ be an edge of $\widetilde K_2$ with endpoints $\widetilde{\vec x}_1$ and $\widetilde{\vec x}_2$. Furthermore, let $\widehat e$ be the corresponding edge of $\widehat K_2$ with endpoints $\widehat{\vec x}_1$ and $\widehat{\vec x}_2$. 
In the penultimate equation we estimated $h_{\widetilde K_2}$ by a term times $h_{\widehat K_2}$. Due to the regularity it is $h_{\widehat K_2}\leq c_{\mathcal K}|\widehat e|$ and, as in the estimate of $\rho_{\widehat K_2}$ above, we find that
\begin{eqnarray*}
 |\widehat e| 
 &   =  & |\widehat{\vec x}_1-\widehat{\vec x}_2|
 =    |F_{K_2}\circ F_{K_1}^{-1}(\widetilde{\vec x}_1-\widetilde{\vec x}_2)|\\
 & \leq & \frac{\alpha_{K_2}}{\alpha_{K_1}}\max_{j=1,\ldots,d}\left\{(1+\delta_j^{K_1,K_2})^{-1/2}\right\}\left(1+\left(\frac{\lambda_{K_1,1}}{\lambda_{K_1,d}}\right)^{1/2}\|R^{K_1,K_2}-I\|_2\right) |\widetilde e|.
\end{eqnarray*}
Summarizing, we obtain
\[
 h_{\widetilde K_2}
 \leq \sqrt{\frac{1+c_\delta}{1-c_\delta}}\;(1+c_R)^2\; c_{\mathcal K}|\widetilde e|
 =    \widetilde c_{\mathcal K}|\widetilde e|.
\]
\end{proof}

\begin{remark}
According to the previous proof, the perturbed regularity parameters are given by
\[
 \widetilde \sigma_{\mathcal K} = \sqrt{\frac{1+c_\delta}{1-c_\delta}}\;(1+c_R)^2\; \sigma_{\mathcal K}
 \quad\mbox{and}\quad
 \widetilde c_{\mathcal K} = \sqrt{\frac{1+c_\delta}{1-c_\delta}}\;(1+c_R)^2\; c_{\mathcal K}.
\]
\end{remark}

\begin{proposition}\label{prop:regPerturbedMappingPatches}
Let $K\in\mathcal K_h$ be a polytopal element of a regular anisotropic mesh~$\mathcal K_h$ and $E\in\mathcal E_h$ one of its edges ($d=2$) or faces ($d=3$). Then, the mapped patches $F_K(\omega_K)$ and $F_K(\omega_E)$ consist of regular polytopal elements.
\end{proposition}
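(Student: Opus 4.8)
The plan is to obtain the proposition directly from Lemma~\ref{lem:regPerturbedMapping} by a covering argument, with essentially no new estimates. First I would recall the definitions $\overline\omega_K=\bigcup_{i\in I(K)}\overline{\omega_i}$ and $\overline\omega_E=\bigcup_{i\in I(E)}\overline{\omega_i}$, and observe that, since $E$ is an edge ($d=2$) or face ($d=3$) of $K$, every node of $E$ is also a node of $K$, so that $I(E)\subseteq I(K)$. Consequently $F_K(\omega_K)=\bigcup_{i\in I(K)}F_K(\omega_i)$ and $F_K(\omega_E)=\bigcup_{i\in I(E)}F_K(\omega_i)$ are both finite unions of sets of the form $F_K(\omega_i)$ with $i\in I(K)$, so it is enough to show that each $F_K(\omega_i)$, $i\in I(K)$, consists of regular polytopal elements. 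It is worth pointing out that no chaining of the perturbation estimates of Definition~\ref{def:reg_anisotropic_mesh} is required here: every element $K'$ constituting $\omega_K$ (and a fortiori of $\omega_E$) shares a node $\vec x_i$, $i\in I(K)$, with $K$, and it is exactly this common node that also places $K$ itself inside the patch $\omega_i$.

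The execution is then immediate. Fixing $i\in I(K)$, one has $\vec x_i\in\overline K$, i.e.\ $K\subset\overline{\omega_i}$, so $K$ is one of the elements of the patch $\omega_i$; Lemma~\ref{lem:regPerturbedMapping} (with $K_1=K$) therefore yields that $F_K(K')$ is regular in the sense of Definition~\ref{def:reg_isotropic_mesh} for every $K'\subset\omega_i$, with the perturbed parameters $\widetilde\sigma_{\mathcal K}=\sqrt{(1+c_\delta)/(1-c_\delta)}\,(1+c_R)^2\,\sigma_{\mathcal K}$ and $\widetilde c_{\mathcal K}=\sqrt{(1+c_\delta)/(1-c_\delta)}\,(1+c_R)^2\,c_{\mathcal K}$, which depend only on the regularity data of $\mathcal K_h$ and not on $i$, $K$ or $K'$. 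Since the number of elements in a patch is uniformly bounded by the mesh regularity, $F_K(\omega_K)$ and $F_K(\omega_E)$ are thus finite unions of uniformly regular polytopal elements, which is the claim.

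The only genuinely delicate point — and the closest thing to an obstacle — appears for $d=3$ in connection with item~\ref{item:reg_iso_mesh:4} of Definition~\ref{def:reg_isotropic_mesh}, i.e.\ the regularity of the polygonal faces of the mapped elements, which the proof of Lemma~\ref{lem:regPerturbedMapping} treats only implicitly. I would handle it by repeating that proof with the linear map $F_K\circ F_{K'}^{-1}$ restricted to the affine hull of a face $\widehat E\subset\partial\widehat{K'}$: this restriction is again linear and injective, it carries a circle inscribed in $\widehat E$ to an ellipse whose semi-axis ratio is controlled by the same product of $\max_j(1+\delta^{K',K}_j)^{\pm1/2}$ and $1+(\lambda_{K',1}/\lambda_{K',d})^{1/2}\|R^{K',K}-I\|_2$, so the face aspect ratios stay bounded by a perturbed parameter $\widetilde\sigma_{\mathcal E}$, and the inequality $h_{\widehat E}^{\,d-1}\le c\,|\widehat E|$ from~\eqref{eq:BoundForRefDiameterDm1} transfers in the same fashion. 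With this addition, all conditions of Definition~\ref{def:reg_isotropic_mesh} are verified for every element of $F_K(\omega_K)$ and $F_K(\omega_E)$, which would complete the proof.
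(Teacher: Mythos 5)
Your proof is correct and follows essentially the same route as the paper: the paper likewise reduces the claim to Lemma~\ref{lem:regPerturbedMapping} applied on the node patches $\omega_i$, $i\in I(K)$, using exactly the observation that $K\subset\omega_i$ for each such $i$, so that no chaining of the perturbation bounds is required. Your additional remark on verifying item~\ref{item:reg_iso_mesh:4} of Definition~\ref{def:reg_isotropic_mesh} for the faces when $d=3$ is a sensible supplement to the paper's very terse argument, which leaves that point implicit.
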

\begin{proof}
The mapped patches $F_K(\omega_i)$, $i\in I(K)$ consist of regular polytopal elements according to Lemma~\ref{lem:regPerturbedMapping}. Since $\omega_K$ and $\omega_E$ are given as union of the neighbourhoods~$\omega_i$, the statement of the proposition follows.
\end{proof}

\begin{proposition}\label{prop:BoundedNumOfNodesVSElems}
Each node~$\vec x_i$ of a regular anisotropic mesh~$\mathcal K_h$ belongs to a uniformly bounded number of elements. Vice versa, each element $K\in\mathcal K_h$ has a uniformly bounded number of nodes on its boundary. 
\end{proposition}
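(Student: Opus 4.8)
The plan is to transfer both claims, by means of the affine bijections $F_K$, to the already-understood setting of regular \emph{isotropic} polytopal elements. Fix a node $\vec x_i$ and pick any $K\in\mathcal K_h$ with $\vec x_i\in\overline K$, so that $K\subset\omega_i$. By Lemma~\ref{lem:regPerturbedMapping} every element $F_K(K')$ with $K'\subset\omega_i$ is a regular polytopal element in the sense of Definition~\ref{def:reg_isotropic_mesh} with the perturbed parameters $\widetilde\sigma_{\mathcal K},\widetilde c_{\mathcal K}$ depending only on the regularity of $\mathcal K_h$. Since $F_K$ is a linear isomorphism it maps the elements of $\omega_i$ bijectively onto those of $F_K(\omega_i)$, all of which share the single node $p:=F_K(\vec x_i)$; likewise $F_K$ maps $\partial K$ bijectively onto $\partial\widehat K$ and carries nodes to nodes. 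Hence it suffices to prove: (a) in a family of regular polytopal elements with pairwise disjoint interiors all containing one common node $p$, the number of elements is bounded by a constant depending only on $\widetilde\sigma_{\mathcal K}$ and $d$; and (b) a regular polytopal element has a uniformly bounded number of nodes on its boundary.

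Claim (b) is the fact recalled after Definition~\ref{def:reg_isotropic_mesh}, cf.~\cite{Weisser2017,Weisser2014,RjasanowWeisser2014}: by item~\ref{item:reg_iso_mesh:3} every edge $\widehat e$ of a regular element $\widehat K$ has length $|\widehat e|\ge h_{\widehat K}/c_{\mathcal K}$, whereas star\-shapedness with respect to an inscribed ball together with the uniformly bounded aspect ratio forces the $(d-1)$-measure of $\partial\widehat K$ to be at most $c\,h_{\widehat K}^{d-1}$; for $d=2$ the quotient of these two bounds the number of edges, hence of nodes, of $\widehat K$. For $d=3$ the same reasoning first bounds the number of faces $\widehat E\subset\partial\widehat K$, and then, since each $\widehat E$ is a regular polygon by item~\ref{item:reg_iso_mesh:4}, the two-dimensional argument bounds the number of nodes on each $\widehat E$ and therefore on $\partial\widehat K$. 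Transporting this back through $F_K$ proves the second assertion of the proposition.

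For (a), let $K'_1,\dots,K'_m$ be the regular polytopal elements in question, each star-shaped with respect to a ball $B(\vec z_j,\rho_j)$ with $|p-\vec z_j|\le h_{K'_j}\le\widetilde\sigma_{\mathcal K}\rho_j$; note also $|p-\vec z_j|\ge\rho_j$, since $p$ is a node and hence does not lie in the inscribed ball. The naive idea of packing the pairwise disjoint inscribed balls $B(\vec z_j,\rho_j)$ into a ball around $p$ fails, and this is the main obstacle: the patch may be strongly graded, so the radii $\rho_j$ (and the diameters $h_{K'_j}$) need not be comparable across the patch. The remedy is a solid-angle argument anchored at $p$. Because $\operatorname{conv}\!\big(\{p\}\cup B(\vec z_j,\rho_j/2)\big)\subset\overline{K'_j}$ while $\rho_j/2<\rho_j\le|p-\vec z_j|\le\widetilde\sigma_{\mathcal K}\rho_j$, this convex hull contains, for every radius $r\le\tfrac12\min_{j}\rho_j$, the circular cone
\[
 \Gamma_j:=\big\{q\in\mathbb R^d:\ \angle(q-p,\,\vec z_j-p)\le\alpha_0,\ |q-p|\le r\big\},\qquad \alpha_0:=\arcsin\!\big(1/(2\widetilde\sigma_{\mathcal K})\big)>0.
\]
Such an $r$ is positive because the minimum is taken over finitely many elements. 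Fixing it, the cones $\Gamma_j\subset\overline{K'_j}\cap B(p,r)$ have pairwise disjoint interiors, and each has volume $|\Gamma_j|=\kappa(\alpha_0,d)\,|B(p,r)|$, where $\kappa(\alpha_0,d)\in(0,1)$ is the ratio of the surface area of a spherical cap of angular radius $\alpha_0$ to that of the whole unit sphere in $\mathbb R^d$. Summing over $j$ and comparing with $|B(p,r)|$ gives $m\,\kappa(\alpha_0,d)\le 1$, that is, $m\le 1/\kappa(\alpha_0,d)=:c(\widetilde\sigma_{\mathcal K},d)$. (Alternatively, one may pass through $F_K$ to the shape-regular auxiliary triangulation of the mapped patch described after Definition~\ref{def:reg_isotropic_mesh} and invoke the classical bound on the number of simplices meeting a vertex in a shape-regular simplicial mesh.) Together with the reduction of the first paragraph this establishes the proposition.
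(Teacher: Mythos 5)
Your proof is correct, and its first half takes a genuinely different route from the paper's. Both arguments begin with the same reduction: fix $K\subset\omega_i$, push the whole patch through $F_K$, and use Lemma~\ref{lem:regPerturbedMapping} to land among regular isotropic elements sharing the node $F_K(\vec x_i)$. From there the paper passes to the auxiliary shape-regular triangulation into simplices (the construction recalled after Definition~\ref{def:reg_isotropic_mesh}) and cites the classical bound on the number of shape-regular simplices meeting a common vertex; you instead give a self-contained cone-packing argument: star-shapedness together with the bounded aspect ratio guarantees that each mapped element contains at the common node a circular cone of half-angle $\alpha_0=\arcsin\bigl(1/(2\widetilde\sigma_{\mathcal K})\bigr)$, and since these cones can be taken with a common radius and have pairwise disjoint interiors, their number is at most $1/\kappa(\alpha_0,d)$. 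This buys two things: it avoids invoking the auxiliary triangulation and its vertex-valence bound as a black box, and it makes the dependence of the constant on $\widetilde\sigma_{\mathcal K}$ and $d$ completely explicit. You also correctly identify and sidestep the pitfall of packing the inscribed balls themselves (which fails on graded patches where the $\rho_j$ are not comparable) by truncating all cones at the common radius $\tfrac12\min_j\rho_j$ --- legitimate since the mesh is finite and this radius drops out of the final count. Your second half, bounding the number of nodes per element by comparing the $(d-1)$-measure of $\partial\widehat K$ with the minimal edge length and iterating over faces for $d=3$, is essentially the argument the paper delegates to the cited references, so there the two proofs coincide.
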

\begin{proof}
Let~$\omega_i$ be the neighbourhood of the node~$x_i$. According to Lemma~\ref{lem:regPerturbedMapping}, the mapped neighbourhood~$\widetilde\omega_i$ consists of regular polytopal elements, which admit a shape-regular decomposition into simplices (triangles or tetrahedra). The mapped node~$\widetilde{\vec x}_i$ therefore belongs to a uniformly bounded number of simplices and thus to finitely many polytopal elements, cf.~\cite{Weisser2011,Weisser2017}. Since $\widetilde\omega_i$ is obtained by a linear transformation, we follow that $\vec x_i$ belongs to a uniformly bounded number of anisotropic elements. 
With the same argument we see that $\widetilde K$ and thus $K$ has a uniformly bounded number of nodes on its boundary.
\end{proof}

\begin{remark}
 In the publication of Apel and Kunert (see e.g.~\cite{Apel1999habil,Kunert2000}), it is assumed that neighbouring triangles/tetrahedra behave similarly. More precisely, they assume:
 \begin{itemize}
  \item The number of tetrahedra containing a node $\vec x_i$ is bounded uniformly.
  \item The dimension of adjacent tetrahedra must not change rapidly, i.e.
   \[h_{i,T}\sim h_{i,T'}\quad\forall T,T' \mbox{ with } T\cap T'\neq\varnothing,\;i=1,2,3,\]
   where $h_{1,T}\geq h_{2,T}\geq h_{3,T}$ are the heights of the tetrahedron~$T$ over its faces.
 \end{itemize}
 The first point is always fulfilled in our setting according to the previous proposition.
 The second point corresponds to our assumption that $\Lambda_{K_1}$ and $\Lambda_{K_2}$ differ moderately for neighbouring elements $K_1$ and $K_2$, see Definition~\ref{def:reg_anisotropic_mesh}. The assumption on $U_{K_1}$ and $U_{K_2}$ in the definition ensure that the heights are aligned in the same directions, this is 
 also hidden in the assumption of Apel and Kunert.
%
\end{remark}

The regularity of the mapped patches has several consequences, which are exploited in later proofs. 
\begin{lemma}\label{lem:UniformBounds}
Let $K_1,K_2\in\mathcal K_h$ be polytopal elements of a regular anisotropic mesh~$\mathcal K_h$, $\omega_i$ and $\omega_{K_1}$ be the neighbourhoods of the node~$\vec x_i$ and the element~$K_1$, respectively. 
Then, we have for the mapped patch $\widetilde\omega\in\{F_{K_1}(\omega_i),F_{K_1}(\omega_{K_1})\}$ and the neighbouring elements $K_1,K_2\subset\omega_i$, that
\[
	h_{\widetilde\omega} \leq c
	\qquad\mbox{and}\qquad
	\frac{|K_2|}{|K_1|} \leq c,
\]
where the constants only depend on the regularity parameters of the mesh.
\end{lemma}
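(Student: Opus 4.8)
The plan is to reduce both estimates to the regularity of the mapped patches established in Lemma~\ref{lem:regPerturbedMapping} and Proposition~\ref{prop:regPerturbedMappingPatches}, together with the volume normalisation $|\widehat K|=1$ and the diameter bound~\eqref{eq:BoundForRefDiameter}. First I would treat $h_{\widetilde\omega}\leq c$. By Proposition~\ref{prop:regPerturbedMappingPatches} the mapped patch $\widetilde\omega$ consists of regular polytopal elements, and by Proposition~\ref{prop:BoundedNumOfNodesVSElems} the number of these elements is uniformly bounded, say by $m$. Each mapped element $F_{K_1}(K')$ with $K'\subset\omega_i$ is regular with perturbed parameters $\widetilde\sigma_{\mathcal K},\widetilde c_{\mathcal K}$, so exactly as in~\eqref{eq:BoundForRefDiameter} its diameter is bounded: $h_{F_{K_1}(K')}\leq \widetilde\sigma_{\mathcal K}/(\nu\pi)^{1/d}\cdot|F_{K_1}(K')|^{1/d}$. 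It remains to control $|F_{K_1}(K')|$; since $|F_{K_1}(K')| = |K'|\,|\det A_{K_1}| = |K'|\,\alpha_{K_1}^d/\sqrt{\det M_\mathrm{Cov}(K_1)}$ and $|F_{K_1}(K_1)|=1$, this equals $|K'|/|K_1|$, which is itself one of the quantities we must bound — so the two statements are genuinely intertwined and I would prove them together.

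Therefore the core of the argument is the volume ratio $|K_2|/|K_1|\leq c$ for neighbouring $K_1,K_2\subset\omega_i$. I compute $|F_{K_1}(K_2)| = |K_2|\,|\det A_{K_1}| = |K_2|/|K_1|$ using $|F_{K_1}(K_1)|=1$ and Lemma~\ref{lem:PropTransform}(i). By Lemma~\ref{lem:regPerturbedMapping}, $\widetilde K_2 = F_{K_1}(K_2)$ is regular with parameter $\widetilde\sigma_{\mathcal K}$, so $|\widetilde K_2|\leq h_{\widetilde K_2}^d$, and I can bound $h_{\widetilde K_2}$ in terms of $h_{\widehat K_2}$ exactly by the intermediate estimate already derived inside the proof of Lemma~\ref{lem:regPerturbedMapping}, namely
\[
  h_{\widetilde K_2} \leq \frac{\alpha_{K_1}}{\alpha_{K_2}}\,\sqrt{\tfrac{1+c_\delta}{1-c_\delta}}\,(1+c_R)\,h_{\widehat K_2},
\]
while $h_{\widehat K_2}\leq \sigma_{\mathcal K}/(\nu\pi)^{1/d}$ by~\eqref{eq:BoundForRefDiameter}. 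Hence everything reduces to bounding the scaling ratio $\alpha_{K_1}/\alpha_{K_2}$ for neighbouring elements.

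For that ratio I use~\eqref{eq:Alpha}: $\alpha_{K}^d = \sqrt{\prod_j\lambda_{K,j}}/|K|$, so
\[
  \left(\frac{\alpha_{K_1}}{\alpha_{K_2}}\right)^d
  = \frac{|K_2|}{|K_1|}\sqrt{\frac{\prod_j\lambda_{K_1,j}}{\prod_j\lambda_{K_2,j}}}
  = \frac{|K_2|}{|K_1|}\;\prod_{j=1}^d (1+\delta_j^{K_1,K_2})^{-1/2},
\]
using $\Lambda_{K_2}=(I+\Delta^{K_1,K_2})\Lambda_{K_1}$ from Definition~\ref{def:reg_anisotropic_mesh}. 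Combining the chain $|K_2|/|K_1| = |\widetilde K_2| \leq h_{\widetilde K_2}^d \leq (\alpha_{K_1}/\alpha_{K_2})^d\,C\,\sigma_{\mathcal K}^d/(\nu\pi)$ with the displayed identity for $(\alpha_{K_1}/\alpha_{K_2})^d$, the factor $|K_2|/|K_1|$ cancels on both sides and I am left with $1 \leq C\,\sigma_{\mathcal K}^d/(\nu\pi)\cdot\prod_j(1+\delta_j)^{-1/2}$, which is a vacuous inequality — so this circular route only confirms consistency, and I instead must break the circularity by bounding $\alpha_{K_1}/\alpha_{K_2}$ directly through the geometry: the inscribed ball of $\widehat K_1$ maps under $F_{K_2}\circ F_{K_1}^{-1}$ into $\widehat K_2$, and since $\widehat K_1$ contains a ball of radius $\rho_{\widehat K_1}\geq h_{\widehat K_1}/\sigma_{\mathcal K}\geq 1/\sigma_{\mathcal K}$ and $\widehat K_2\subset B(0,h_{\widehat K_2})$ with $h_{\widehat K_2}\leq\sigma_{\mathcal K}/(\nu\pi)^{1/d}$, comparing $d$-volumes and using $\|F_{K_2}\circ F_{K_1}^{-1}\|_2,\|(F_{K_2}\circ F_{K_1}^{-1})^{-1}\|_2\leq (\alpha_{K_2}/\alpha_{K_1})\,\sqrt{(1+c_\delta)/(1-c_\delta)}(1+c_R)$ (as computed inside Lemma~\ref{lem:regPerturbedMapping}) forces $\alpha_{K_1}/\alpha_{K_2}\leq c$ and symmetrically $\alpha_{K_2}/\alpha_{K_1}\leq c$. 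With $\alpha_{K_1}/\alpha_{K_2}\leq c$ in hand, $|K_2|/|K_1| = |\widetilde K_2|\leq h_{\widetilde K_2}^d\leq c$ follows immediately, and then $h_{F_{K_1}(K')}\leq c\,|K'|^{1/d}/|K_1|^{1/d}\leq c$ for each of the boundedly many elements of $\widetilde\omega$, whence $h_{\widetilde\omega}\leq c$. The main obstacle is exactly this decoupling step: extracting a genuine bound on $\alpha_{K_1}/\alpha_{K_2}$ (equivalently on $|\widehat K_2|_{\text{in the }K_1\text{-frame}}$) from the norm bounds on $F_{K_2}\circ F_{K_1}^{-1}$ and the two-sided diameter/inradius control of the reference configurations, without circularly invoking the very volume ratio one is trying to estimate.
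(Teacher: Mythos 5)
There is a genuine gap, and you have in fact located it yourself: the ``decoupling step'' that you defer to the end is never actually carried out, and the mechanism you sketch for it is incorrect. The map $F_{K_2}\circ F_{K_1}^{-1}$ sends $\widehat K_1=F_{K_1}(K_1)$ onto $F_{K_2}(K_1)$, \emph{not} into $\widehat K_2=F_{K_2}(K_2)$; since $K_1$ and $K_2$ are non-overlapping, the image of the inscribed ball of $\widehat K_1$ is disjoint from the interior of $\widehat K_2$, so there is no containment whose volumes you can compare. Moreover, any volume comparison of this type is circular by construction: $|F_{K_2}(K_1)|=|K_1|\,|\det A_{K_2}|=(\alpha_{K_2}/\alpha_{K_1})^d\prod_j(1+\delta_j^{K_1,K_2})^{-1/2}$, so bounding this volume is \emph{equivalent} to bounding $\alpha_{K_2}/\alpha_{K_1}$, which you already showed is equivalent to bounding $|K_1|/|K_2|$. (The stated operator-norm bound for the inverse map is also off: $\|F_{K_1}\circ F_{K_2}^{-1}\|_2$ carries the factor $\alpha_{K_1}/\alpha_{K_2}$, not $\alpha_{K_2}/\alpha_{K_1}$.) The root of the difficulty is your very first step: you bound $h_{F_{K_1}(K')}$ via the aspect-ratio condition \ref{item:reg_iso_mesh:2} of Definition~\ref{def:reg_isotropic_mesh}, i.e.\ $h^d\leq \widetilde\sigma_{\mathcal K}^d\,|F_{K_1}(K')|/(\nu\pi)$, which forces you to know the mapped volume and hence the volume ratio before you can bound the diameter.

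The paper breaks the circle with a piece of geometric input you never invoke: condition \ref{item:reg_iso_mesh:3} of Definition~\ref{def:reg_isotropic_mesh} (every edge of a regular element is comparable to its diameter), applied to the edge that two neighbouring mapped elements \emph{share}. If $\widetilde K\subset\widetilde\omega$ shares an edge $\widetilde e$ with $\widetilde K_1=F_{K_1}(K_1)=\widehat K_1$, then by Lemma~\ref{lem:regPerturbedMapping}
\[
  h_{\widetilde K}\;\leq\;\widetilde c_{\mathcal K}\,|\widetilde e|\;\leq\;\widetilde c_{\mathcal K}\,h_{\widehat K_1}\;\leq\;\widetilde c_{\mathcal K}\,\frac{\sigma_{\mathcal K}}{(\nu\pi)^{1/d}}
\]
by \eqref{eq:BoundForRefDiameter} --- no volumes, no $\alpha$-ratios appear (elements not adjacent to $K_1$ are reached through a chain of such comparisons inside $\widetilde\omega$). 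This gives $h_{\widetilde\omega}\leq c$ outright, and the volume ratio then falls out for free in the opposite order to the one you propose: $|K_2|/|K_1|=|\widetilde K_2|/|\widehat K_1|=|\widetilde K_2|\leq h_{\widetilde\omega}^d\leq c$. To repair your proof, replace the aspect-ratio bound on $h_{F_{K_1}(K')}$ by this shared-edge argument; everything downstream (including the bound on $\alpha_{K_1}/\alpha_{K_2}$, should you want it) then follows without circularity.
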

\begin{proof}
According to Lemma~\ref{lem:regPerturbedMapping} and Proposition~\ref{prop:regPerturbedMappingPatches} the patch $\widetilde\omega$ consists of regular polytopal elements. 
Obviously, it is $h_{\widetilde\omega}\leq 2\max\{h_{\widetilde{K}}:\widetilde{K}\subset \widetilde\omega\}$. Let us assume without loss of generality that the maximum is reached for $\widetilde{K}$ which shares a common edge $\widetilde e$ with $\widetilde K_1$. Otherwise consider a sequence of polytopal elements in~$\widetilde\omega$, see~\cite{Weisser2011}. Due to the regularity of the elements, it is
\[
  h_{\widetilde\omega}
  \leq 2h_{\widetilde{K}}
  \leq 2c_\mathcal{K}|\widetilde{e}|
  \leq 2c_\mathcal{K}h_{\widetilde{K}_1}
  \leq \frac{2c_\mathcal{K}\sigma_{\mathcal K}}{(\nu\pi)^{1/d}}
\]
according to~\eqref{eq:BoundForRefDiameter}, since $\widetilde K_1=F_{K_1}(K_1)=\widehat K_1$.

In order to prove the second estimate, we observe that
$|K_1|=|\widehat K_1|/|\det(A_{K_1})|$, see Lemma~\ref{lem:PropTransform}. The same variable transform yields $|K_2|=|\widetilde K_2|/|\det(A_{K_1})|$, where $\widetilde K_2=F_{K_1}(K_2)$. Thus, we obtain
\[
  \frac{|K_2|}{|K_1|} 
  = \frac{|\widetilde K_2|}{|\widehat K_1|} 
  = |\widetilde K_2|
  \leq |\widetilde\omega_i|
  \leq h_{\widetilde\omega_i}^d
  \leq c.
\]
\end{proof}

\section{Anisotropic trace inequality and best approximation}
\label{sec:TraceInequalityAndBestApprox}
In this section we introduce some tools which are needed in later proofs. Here, the mapping~\eqref{eq:trafo} is employed to transform a regular anisotropic element~$K$ onto its reference configuration~$\widehat K$, which is regular in the sense of Definition~\ref{def:reg_isotropic_mesh}.

\begin{lemma}[\bf anisotropic trace inequality]\label{lem:AnisotropicTraceInequality}
 Let $K\in\mathcal K_h$ be a polytopal element of a regular anisotropic mesh~$\mathcal K_h$ with edge ($d=2$) or face ($d=3$) $E\in\mathcal E_h$, $E\subset\partial K$. It holds
 \[
  \|v\|_{L_2(E)}^2 \leq c\;\frac{|E|}{|K|}\left(\|v\|_{L_2(K)}^2+\|A_K^{-\top}\nabla v\|_{L_2(K)}^2\right),
 \]
 where the constant~$c$ only depends on the regularity parameters of the mesh.
\end{lemma}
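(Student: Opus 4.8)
The plan is to pull everything back to the reference configuration $\widehat K = F_K(K)$, where the classical (isotropic) trace inequality applies, and then transform the resulting estimate back to $K$, tracking the Jacobian factors carefully. First I would recall that $\widehat K$ is a regular polytopal element in the sense of Definition~\ref{def:reg_isotropic_mesh} (by the first condition in Definition~\ref{def:reg_anisotropic_mesh}), so it admits a shape-regular decomposition into simplices with a uniformly bounded number of pieces, and on each such simplex the standard scaled trace inequality holds. Combining over the bounded number of simplices forming the face $\widehat E = F_K(E)$, one obtains a trace inequality of the form
\[
 \|\widehat v\|_{L_2(\widehat E)}^2 \leq c\,\frac{|\widehat E|}{|\widehat K|}\left(\|\widehat v\|_{L_2(\widehat K)}^2 + h_{\widehat K}^2\,\|\widehat\nabla \widehat v\|_{L_2(\widehat K)}^2\right)
\]
for all $\widehat v \in H^1(\widehat K)$, where the constant depends only on the (perturbed) regularity parameters. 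Since $|\widehat K| = 1$ and $1 \leq h_{\widehat K} \leq \sigma_{\mathcal K}/(\nu\pi)^{1/d}$ by~\eqref{eq:BoundForRefDiameter}, the factor $h_{\widehat K}^2$ is bounded above and below by constants and can be absorbed, and likewise $|\widehat K|$ in the denominator is harmless.

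Next I would carry out the change of variables $\widehat{\vec x} = F_K(\vec x) = A_K \vec x$. Writing $v(\vec x) = \widehat v(A_K\vec x)$, the volume integrals transform as $\|v\|_{L_2(K)}^2 = |\det A_K|^{-1}\|\widehat v\|_{L_2(\widehat K)}^2$ and, for the $d-1$-dimensional surface measure, $\|v\|_{L_2(E)}^2$ relates to $\|\widehat v\|_{L_2(\widehat E)}^2$ by the ratio of the $(d-1)$-dimensional Jacobians of $F_K$ restricted to the hyperplane containing $E$; combining this with the $d$-dimensional Jacobian, the key identity is that the surface/volume ratio transforms as
\[
 \frac{|\widehat E|}{|\widehat K|} = \frac{|\det A_K|\,|E|_{?}}{|\det A_K|\,|K|}\cdot(\text{correction}),
\]
so that $\frac{|\widehat E|}{|\widehat K|}$ and $\frac{|E|}{|K|}$ differ only by the ratio of surface Jacobians, which I would handle simply by noting $|\widehat E| = \int_E |\det A_K|\,\|A_K^{-\top}\vec n_E\|^{-1}\cdots$ — more cleanly, by the coarea-type relation one has $|\widehat E|/|\widehat K| = (|E|/|K|)\cdot J_E$ where $J_E$ depends on $A_K$ and the orientation of $E$, and in fact it suffices to observe that $|\widehat K| = |\det A_K|\,|K|$ and $|\widehat E| \leq |\det A_K|\,\|A_K^{-1}\|_2\,|E|$ is not quite what is wanted — the honest route is: since both $|E|/|K|$ appears on the target right-hand side, I would instead directly write $\|v\|_{L_2(E)}^2 = \int_E |\widehat v(A_K\vec x)|^2\,ds_{\vec x}$ and $\|\widehat v\|_{L_2(\widehat E)}^2 = \int_E |\widehat v(A_K\vec x)|^2\,g(\vec x)\,ds_{\vec x}$ with $g$ the surface Jacobian, then bound $g$ above and below. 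The gradient term transforms by the chain rule $\widehat\nabla\widehat v = A_K^{-\top}\nabla v$ (up to the usual transpose convention), which is exactly why the factor $A_K^{-\top}\nabla v$ appears in the claimed estimate — no extra $h_{\widehat K}$ is needed there because the $h_{\widehat K}^2$ from the reference inequality is $O(1)$.

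The main obstacle I anticipate is bookkeeping the surface Jacobian $g$ on the face $E$ and showing that the combination $\frac{|\widehat E|}{|\widehat K|}\,g(\vec x)^{-1}$ (the factor that ultimately multiplies the volume terms after transforming back) is comparable to $\frac{|E|}{|K|}$ up to a constant depending only on the regularity parameters. The clean way to do this is to use that $|\widehat E| = \int_E g\,ds$ so that $\frac{|\widehat E|}{|\widehat K|\,g(\vec x)} = \frac{|E|}{|K|}\cdot\frac{\fint_E g}{g(\vec x)}$, and $g$ is constant on $E$ (since $F_K$ is linear and $E$ is flat), so $\fint_E g/g(\vec x) = 1$ exactly. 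Thus the transformation is essentially exact in the surface/volume ratio, and the only estimation needed is the comparability $1 \lesssim h_{\widehat K}^2 \lesssim 1$ from~\eqref{eq:BoundForRefDiameter} together with the bounded number of reference simplices from Lemma~\ref{lem:regPerturbedMapping} and Proposition~\ref{prop:BoundedNumOfNodesVSElems}. Once these are in place the estimate follows by substituting back; the generic constant $c$ collects the reference trace constant, the number of auxiliary simplices, and the $h_{\widehat K}$ bounds, hence depends only on $\sigma_{\mathcal K}$, $c_{\mathcal K}$, $\sigma_{\mathcal E}$, $c_\delta$, $c_R$ and $d$.
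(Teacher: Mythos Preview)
Your approach is correct and essentially the same as the paper's: transform to the reference configuration~$\widehat K$, apply an isotropic trace inequality there (valid since $\widehat K$ is regular), and transform back using the chain rule $\widehat\nabla\widehat v = A_K^{-\top}\nabla v$ and the bounds~\eqref{eq:BoundForRefDiameter} on $h_{\widehat K}$. Your eventual observation that the surface Jacobian is constant on the flat face~$E$ under the linear map~$F_K$ is exactly what the paper uses, written simply as $\|v\|_{L_2(E)}^2 = \frac{|E|}{|\widehat E|}\|\widehat v\|_{L_2(\widehat E)}^2$; the meandering through coarea-type formulas and explicit $g(\vec x)$ is unnecessary once you note this.

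The only minor difference is the form of the reference trace inequality. You assume the version with prefactor $|\widehat E|/|\widehat K|$, which then cancels exactly against the surface-scaling factor $|E|/|\widehat E|$ (since $|\widehat K|=1$). The paper instead uses the version $\|\widehat v\|_{L_2(\widehat E)}^2 \leq c\big(h_{\widehat K}^{-1}\|\widehat v\|_{L_2(\widehat K)}^2 + h_{\widehat K}|\widehat v|_{H^1(\widehat K)}^2\big)$ and then invokes~\eqref{eq:BoundForRefDiameterDm1}, i.e.\ $h_{\widehat K}^{d-1}\leq c|\widehat E|$, to replace $|\widehat E|^{-1}$ by $h_{\widehat K}^{1-d}$. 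On regular isotropic elements these two forms of the trace inequality are equivalent, so both routes are valid; yours is arguably cleaner but presupposes the $|\widehat E|/|\widehat K|$ form, while the paper's relies on the more commonly cited $h_{\widehat K}$-scaled form plus the auxiliary bound~\eqref{eq:BoundForRefDiameterDm1}.
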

\begin{proof}
In order to prove the estimate, we make use of the transformation~\eqref{eq:trafo} to the reference configuration~$\widehat K$ with $\widehat v=v\circ F_K^{-1}$, a trace inequality on~$\widehat K$, see~\cite{BeiraoDaVeigaLipnikovManzini2014,Weisser2011,Weisser2017}, as well as of~\eqref{eq:BoundForRefDiameter}, \eqref{eq:BoundForRefDiameterDm1} and $h_{\widehat K}^{-d}\leq |\widehat K|^{-1} = 1$:
\begin{eqnarray*}
  \|v\|_{L_2(E)}^2 
  & = &    \frac{|E|}{|\widehat E|}\|\widehat v\|_{L_2(\widehat E)}^2\\
  & \leq &  c\frac{|E|}{|\widehat E|}\left(h_{\widehat K}^{-1}\|\widehat v\|_{L_2(\widehat K)}^2+h_{\widehat K}|\widehat v|_{H^1(\widehat K)}^2\right) \\
  & \leq &  c|E|h_{\widehat K}^{-d}\left(\|\widehat v\|_{L_2(\widehat K)}^2+h_{\widehat K}^{2}|\widehat v|_{H^1(\widehat K)}^2\right) \\
  & \leq &  c|E|\left(\|\widehat v\|_{L_2(\widehat K)}^2+\|\widehat\nabla\widehat v\|_{L_2(\widehat K)}^2\right) \\
  & =    &  c\frac{|E|}{|K|}\left(\|v\|_{L_2(K)}^2+\|A^{-\top}\nabla v\|_{L_2(K)}^2\right).
\end{eqnarray*}
\end{proof}
\begin{remark}
 If we plug in the definition of $A=\alpha\Lambda_K^{-1/2}U_K^\top$, we have the anisotropic trace inequality
 \[
  \|v\|_{L_2(E)}^2 \leq c\frac{|E|}{|K|}\left(\|v\|_{L_2(K)}^2+\|\alpha^{-1}\Lambda_K^{1/2}U_K^\top\nabla v\|_{L_2(K)}^2\right).
 \]
 Obviously, the derivatives of $v$ in the directions $\vec u_{K,j}$ are scaled by $\lambda_j^{1/2}$, $j=1,\ldots,d$. This seems to be appropriate for functions with anisotropic behaviour which are aligned with the mesh.
\end{remark}

For later comparisons with other methods, we bound the term $|E|/|K|$ in case of $E\subset\partial K$. 
Let $\vec z_{\widehat K}$ be the midpoint of the circle/ball in Definition~\ref{def:reg_isotropic_mesh} of the regular reference configuration~$\widehat K$.
Obviously, it is $|K|\geq |P|$ for the $d$-dimensional pyramid with base side~$E$ and apex point $F_K^{-1}(z_{\widehat K})$, since $P\subset K$ due to the linearity of~$F_K$. Let $h_{P,E}$ be the hight of this pyramid, then it is $|P|=\frac13|E|h_{P,E}$ and we obtain
\begin{equation}\label{eq:BoundEbyK1}
 \frac{|E|}{|K|} \leq ch_{P,E}^{-1}.
\end{equation}

In the derivation of approximation estimates, the Poincar\'e constant plays a crucial role. For a domain $\omega\subset\mathbb R^d$, $d=2,3$, it is defined by
\begin{equation}\label{eq:PoincareConstant}
 C_P(\omega) = \sup_{v\in H^1(\omega)} \frac{\Vert v-\Pi_\omega v\Vert_{L_2(\omega)}}{h_{\omega} \vert v\vert_{H^1(\omega)}} < \infty,
\end{equation}
where $h_\omega$ is the diameter of $\omega$ and $\Pi_\omega$ denotes the $L_2$-projection over~$\omega$ into constants, i.e. 
\[
  \Pi_\omega v = \frac{1}{|\omega|}\int_\omega v(\vec x)d\vec x.
\]
\begin{lemma}\label{lem:AnisotropicPoincareOfMappedPatch}
Let~$\mathcal K_h$ be a regular anisotropic mesh, $\omega_i$ and $\omega_K$ be patches as described above, and $K\in\mathcal K_h$ with $K\subset\omega_i$. The Poincar\'e constants $C_P(\widetilde\omega_i)$ and $C_P(\widetilde\omega_K)$ for the mapped patches $\widetilde\omega_i=F_K(\omega_i)$ and $\widetilde\omega_K=F_K(\omega_K)$, respectively, can be bounded uniformly depending only on the regularity parameters of the mesh.
\end{lemma}
\begin{proof}
According to Lemma~\ref{lem:regPerturbedMapping} and Proposition~\ref{prop:regPerturbedMappingPatches}, the patches~$\widetilde\omega_i$ and~$\widetilde\omega_K$ consist of regular polytopal elements, which admit a shape-regular auxiliary triangulation with a uniformly bounded number of simplices. Thus, we can proceed as in~\cite{BeiraoDaVeigaLipnikovManzini2014} in order to prove the existence of a constant~$C^{Int}$ such that for $v\in H^1(\widetilde\omega)$, $\widetilde\omega\in\{\widetilde\omega_i,\widetilde\omega_K\}$ there exists a constant~$p$ such that
\[
 \|v-p\|_{L_2(\widetilde\omega)}+h_{\widetilde\omega}|v|_{H^1(\widetilde\omega)} \leq C^{Int}h_{\widetilde\omega}|v|_{H^1(\widetilde\omega)},
\]
where $C^{Int}$ only depends on the regularity of the triangulation and the number of simplices therein. Since $\|v-\Pi_{\widetilde\omega}v\|_{L_2(\widetilde\omega)}\leq\|v-p\|_{L_2(\widetilde\omega)}$, the statement of the proposition follows. See also the results from~\cite{Weisser2011,Weisser2017} for $d=2$.
\end{proof}

Next, we derive a best approximation result on patches of anisotropic elements.
\begin{lemma}\label{lem:AnisotropicPoincare}
Let~$\mathcal K_h$ be a regular anisotropic mesh with node~$\vec x_i$ and element $K\in\mathcal K_h$. Furthermore, let $\omega_i$ and $\omega_K$ be the neighbourhood of~$x_i$ and~$K$, respectively, and we assume $K\subset\omega_i$. For $\omega\in\{\omega_i,\omega_K\}$ it holds
\begin{equation*}
\|v-\Pi_{\omega}v\|_{L_2(\omega)} \leq c\;\|A_K^{-\top}\nabla v\|_{L_2(\omega)},
\end{equation*}
and furthermore
\begin{equation*}
\|v-\Pi_{\omega}v\|_{L_2(\omega)} \leq c\left(\sum_{K^\prime\in\mathcal K_h:K^\prime\subset\omega}\|A_{K^\prime}^{-\top}\nabla v\|_{L_2(K^\prime)}^2\right)^{1/2},
\end{equation*}
where the constant~$c$ only depends on the regularity parameters of the mesh.
\end{lemma}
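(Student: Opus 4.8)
The plan is to prove the first inequality by pulling the whole patch back to the reference configuration $\widetilde\omega=F_K(\omega)$, where a scale-invariant Poincar\'e inequality is already available from Lemma~\ref{lem:AnisotropicPoincareOfMappedPatch}, and then to deduce the second inequality by a local, element-by-element comparison of the anisotropic weights $A_{K'}$ and $A_K$ on neighbouring elements.

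For the first estimate I would set $\widehat v=v\circ F_K^{-1}$ on $\widetilde\omega$ and observe that, since $F_K$ is linear with constant Jacobian $\det A_K$, the averaging commutes with the change of variables, $\Pi_{\widetilde\omega}\widehat v=\Pi_\omega v$, so that $(v-\Pi_\omega v)\circ F_K^{-1}=\widehat v-\Pi_{\widetilde\omega}\widehat v$. Together with the transformation rule $\widehat\nabla\widehat v=(A_K^{-\top}\nabla v)\circ F_K^{-1}$, the substitution $\widehat{\vec x}=F_K(\vec x)$ gives
\[
\|v-\Pi_\omega v\|_{L_2(\omega)}^2=\frac{1}{|\det A_K|}\,\|\widehat v-\Pi_{\widetilde\omega}\widehat v\|_{L_2(\widetilde\omega)}^2
\quad\text{and}\quad
|\widehat v|_{H^1(\widetilde\omega)}^2=|\det A_K|\,\|A_K^{-\top}\nabla v\|_{L_2(\omega)}^2.
\]
Applying the definition~\eqref{eq:PoincareConstant} of $C_P(\widetilde\omega)$ on $\widetilde\omega\in\{\widetilde\omega_i,\widetilde\omega_K\}$, which is uniformly bounded by Lemma~\ref{lem:AnisotropicPoincareOfMappedPatch}, together with $h_{\widetilde\omega}\leq c$ from Lemma~\ref{lem:UniformBounds}, the factor $|\det A_K|$ cancels and one is left with $\|v-\Pi_\omega v\|_{L_2(\omega)}^2\leq C_P(\widetilde\omega)^2\,h_{\widetilde\omega}^2\,\|A_K^{-\top}\nabla v\|_{L_2(\omega)}^2\leq c\,\|A_K^{-\top}\nabla v\|_{L_2(\omega)}^2$.

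For the second estimate I would split $\|A_K^{-\top}\nabla v\|_{L_2(\omega)}^2=\sum_{K'\subset\omega}\|A_K^{-\top}\nabla v\|_{L_2(K')}^2$ and write, pointwise on each $K'$, $A_K^{-\top}\nabla v=(A_K^{-\top}A_{K'}^\top)\,A_{K'}^{-\top}\nabla v$, so it remains to bound $\|A_K^{-\top}A_{K'}^\top\|_2=\|A_{K'}A_K^{-1}\|_2$ uniformly. Every $K'\subset\omega$ shares a node with $K$ (both lie in some vertex patch $\omega_j$, $j\in I(K)$), hence $K,K'$ are neighbours and Definition~\ref{def:reg_anisotropic_mesh} applies with $K_1=K$, $K_2=K'$. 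Inserting $\Lambda_{K'}=(I+\Delta^{K,K'})\Lambda_K$ and $U_{K'}=R^{K,K'}U_K$ and splitting off $R^{K,K'}-I$, exactly as in the proof of Lemma~\ref{lem:regPerturbedMapping}, yields
\[
A_{K'}A_K^{-1}=\frac{\alpha_{K'}}{\alpha_K}\,(I+\Delta^{K,K'})^{-1/2}\Big(\Lambda_K^{-1/2}U_K^\top(R^{K,K'}-I)^\top U_K\Lambda_K^{1/2}+I\Big),
\]
and, using rotation invariance of $\|\cdot\|_2$ and $\|\Lambda_K^{-1/2}\|_2\|\Lambda_K^{1/2}\|_2=(\lambda_{K,1}/\lambda_{K,d})^{1/2}$, one gets $\|A_{K'}A_K^{-1}\|_2\leq(\alpha_{K'}/\alpha_K)(1-c_\delta)^{-1/2}(1+c_R)$. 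Finally~\eqref{eq:Alpha} gives $(\alpha_{K'}/\alpha_K)^d=(|K|/|K'|)\prod_{j=1}^d(1+\delta_j^{K,K'})^{1/2}$, which is uniformly bounded by Lemma~\ref{lem:UniformBounds}; collecting these bounds gives $\|A_K^{-\top}A_{K'}^\top\|_2\leq c$ and hence the claim.

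The main obstacle is precisely this last comparison $\|A_{K'}A_K^{-1}\|_2\leq c$: the matrix $\Lambda_K^{-1/2}$ has spectral norm $\lambda_{K,d}^{-1/2}$ and blows up for very thin elements, and the argument only closes because the scaled smallness condition $\|R^{K,K'}-I\|_2(\lambda_{K,1}/\lambda_{K,d})^{1/2}<c_R$ in Definition~\ref{def:reg_anisotropic_mesh} is tailored to compensate exactly this ill-conditioning. Everything else is bookkeeping with changes of variables of the type already carried out in Lemmas~\ref{lem:PropTransform}, \ref{lem:regPerturbedMapping} and \ref{lem:UniformBounds}.
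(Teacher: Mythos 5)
Your proposal is correct and follows essentially the same route as the paper: the first inequality is obtained by mapping the patch to $\widetilde\omega=F_K(\omega)$, using that the mean value commutes with the linear change of variables, and invoking the uniform bounds on $C_P(\widetilde\omega)$ and $h_{\widetilde\omega}$ from Lemmas~\ref{lem:AnisotropicPoincareOfMappedPatch} and~\ref{lem:UniformBounds}; the second follows by the same element-wise comparison of $A_K^{-\top}$ with $A_{K'}^{-\top}$ via Definition~\ref{def:reg_anisotropic_mesh}, splitting off $R^{K,K'}-I$ and bounding $\alpha_{K'}/\alpha_K$ through \eqref{eq:Alpha} and Lemma~\ref{lem:UniformBounds}. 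Your identification of the scaled rotation condition as the step that absorbs the $(\lambda_{K,1}/\lambda_{K,d})^{1/2}$ ill-conditioning is exactly the mechanism the paper relies on.
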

\begin{proof}
We make use of the mapping~\eqref{eq:trafo} and indicate the objects on the mapped geometry with a tilde, e.g., $\widetilde\omega=F_K(\omega)$. Furthermore, we exploited that the mapped $L_2$-projection coincides with the $L_2$-projection on the mapped patch, i.e. $\widetilde{\Pi_{\omega}v}=\Pi_{\widetilde\omega}\widetilde v$. This yields together with Lemma~\ref{lem:AnisotropicPoincareOfMappedPatch} 
\begin{eqnarray*}
\|v-\Pi_{\omega}v\|_{L_2(\omega)}
& = &  |K|^{1/2}\;\|\widetilde v-\Pi_{\widetilde\omega}\widetilde v\|_{L_2(\widetilde\omega)} \\
& \leq &  ch_{\widetilde\omega}|K|^{1/2}\;|\widetilde v|_{H^1(\widetilde\omega)} \\
& = &  ch_{\widetilde\omega}|K|^{1/2}\;\|\widetilde\nabla\widetilde v\|_{L_2(\widetilde\omega)} \\
& = &  ch_{\widetilde\omega}\;\|A_K^{-\top}\nabla v\|_{L_2(\omega)}.
\end{eqnarray*}
The term $h_{\widetilde\omega}$ is uniformly bounded according to Lemma~\ref{lem:UniformBounds}, and thus the first estimate is proved.

In order to prove the second estimate, we employ the first one and write
\[
 \|v-\Pi_{\omega}v\|_{L_2(\omega)}^2
 \leq c\;\|A_K^{-\top}\nabla v\|_{L_2(\omega)}^2
 = c\sum_{K^\prime\in\mathcal K_h:K^\prime\subset\omega}\|A_{K}^{-\top}\nabla v\|_{L_2(K^\prime)}^2.
\]
Therefore, it remains to estimate $\|A_{K}^{-\top}\nabla v\|_{L_2(K^\prime)}$ by $\|A_{K^\prime}^{-\top}\nabla v\|_{L_2(K^\prime)}$ for any $K^\prime\subset\omega$. We make use of the mesh regularity, see Definition~\ref{def:reg_anisotropic_mesh}, and proceed similar as in the proof of Lemma~\ref{lem:regPerturbedMapping}.
\begin{eqnarray*}
 \lefteqn{\|A_{K}^{-\top}\nabla v\|_{L_2(K^\prime)}
     =    \frac{\alpha_{K^\prime}}{\alpha_{K}}\|\alpha_{K^\prime}^{-1}((I+\Delta^{K^\prime,K})\Lambda_{K^\prime})^{1/2}(R^{K^\prime,K}U_{K^\prime})^\top\nabla v\|_{L_2(K^\prime)}}\\
 &   =  & \frac{\alpha_{K^\prime}}{\alpha_{K}}\|\alpha_{K^\prime}^{-1}(I+\Delta^{K^\prime,K})^{1/2}\Lambda_{K^\prime}^{1/2}U_{K^\prime}^\top(R^{K^\prime,K})^\top\nabla v\|_{L_2(K^\prime)}\\
 &   =  & \frac{\alpha_{K^\prime}}{\alpha_{K}}\|\alpha_{K^\prime}^{-1}(I+\Delta^{K^\prime,K})^{1/2}\Lambda_{K^\prime}^{1/2}U_{K^\prime}^\top(R^{K^\prime,K})^\top U_{K^\prime}\Lambda_{K^\prime}^{-1/2}\Lambda_{K^\prime}^{1/2}U_{K^\prime}^\top\nabla v\|_{L_2(K^\prime)}\\
 & \leq & \frac{\alpha_{K^\prime}}{\alpha_{K}}\|(I+\Delta^{K^\prime,K})^{1/2}\Lambda_{K^\prime}^{1/2}U_{K^\prime}^\top(R^{K^\prime,K})^\top U_{K^\prime}\Lambda_{K^\prime}^{-1/2}\|_2\|A_{K^\prime}^{-\top}\nabla v\|_{L_2(K^\prime)},
\end{eqnarray*}
where we substituted $A_K^{-\top}=\alpha_{K^\prime}^{-1}\Lambda_{K^\prime}^{1/2}U_{K^\prime}^\top$.
Finally, we have to bound the ratio $\alpha_{K^\prime}/\alpha_{K}$ and the matrix norm.
According to the choice~\eqref{eq:Alpha} and Lemma~\ref{lem:UniformBounds}, it is
\begin{eqnarray*}
 \left(\frac{\alpha_{K^\prime}}{\alpha_{K}}\right)^2
 &   =  & \frac{|K|\sqrt{\prod_{j=1}^d\lambda_{K^\prime,j}}}{|K^\prime|\sqrt{\prod_{j=1}^d\lambda_{K,j}}}
     =    \frac{|K|\sqrt{\prod_{j=1}^d(1+\delta^{K,K^\prime}_j)\lambda_{K,j}}}{|K^\prime|\sqrt{\prod_{j=1}^d\lambda_{K,j}}}\\
 & \leq & (1+c_\delta)^{d/2}\frac{|K|}{|K^\prime|}
   \leq   c,
\end{eqnarray*}
and for the matrix norm, we have 
\begin{eqnarray*}
  \lefteqn{\|(I+\Delta^{K^\prime,K})^{1/2}\Lambda_{K^\prime}^{1/2}U_{K^\prime}^\top(R^{K^\prime,K})^\top U_{K^\prime}\Lambda_{K^\prime}^{-1/2}\|_2}\\
  & \leq & \|(I+\Delta^{K^\prime,K})^{1/2}\|_2 \|\Lambda_{K^\prime}^{1/2}U_{K^\prime}^\top(R^{K^\prime,K}-I)^\top U_{K^\prime}\Lambda_{K^\prime}^{-1/2}+I\|_2\\
  & \leq & \sqrt{1+c_\delta}(1+c_R),
\end{eqnarray*}
which finishes the proof.
\end{proof}
\begin{remark}\label{rem:NeighbouringElements}
 In the previous proof, we have seen in particular that for neighbouring elements $K,K^\prime\subset\omega_K$, it is 
 \[
  \|A_{K}^{-\top}\nabla v\|_{L_2(K^\prime)}
  \leq c\;\|A_{K^\prime}^{-\top}\nabla v\|_{L_2(K^\prime)}
 \]
 with a constant depending only on the regularity of the mesh.
\end{remark}

\section{Quasi-interpolation of non-smooth functions}
\label{sec:QuasiInterpolation}
In this section, we study quasi-interpolation operators on anisotropic polygonal and polyhedral meshes. Classical results on simplicial meshes with isotropic elements go back to Cl\'ement~\cite{Clement1975} and to Scott and Zhang~\cite{ScottZhang1990}. Quasi-interpolation operators on anisotropic simplicial meshes can be found in~\cite{Apel1999habil,Kunert2000}, for example. Cl\'ement-type interpolation operators on polygonal meshes have been studied in~\cite{Weisser2011,Weisser2017}.

Having the application to boundary value problems in mind, we split the boundary of the domain $\Omega\subset\mathbb R^d$, $d=2,3$ into a Dirichlet $\Gamma_D$ and Neumann $\Gamma_N$ part such that $\partial\Omega=\overline{\Gamma}_D\cup\overline{\Gamma}_N$. We consider the Sobolev space
\[
  H^1_D(\Omega) = \{v\in H^1(\Omega):\; v=0\mbox{ on } \Gamma_D\}
\]
of functions with vanishing trace on~$\Gamma_D$ and we allow $\Gamma_D=\partial\Omega$ and $\Gamma_D=\varnothing$.
Let $\mathcal K_h$ be a regular anisotropic mesh of~$\Omega$ such that the edges and faces~$\mathcal E_h$ are compatible with the Dirichlet and Neumann boundary. Furthermore, let the nodes~$\vec x_i$ be numbered such that they lie inside of~$\Omega$ for $i=1,\ldots,N_\mathrm{int}$, on~$\Gamma_N$ for $i=N_\mathrm{int}+1,\ldots,N_\mathrm{DoF}$ and on~$\overline\Gamma_D$ for $i=N_\mathrm{DoF}+1,\ldots,N$. If $\Gamma_N=\varnothing$, we set $N_\mathrm{DoF}=N_\mathrm{int}$.
The objective of this section is to define a quasi-interpolation operator
\[
  \mathfrak I:H_D^1(\Omega)\to V_h\cap H_D^1(\Omega),
\]
which fulfils anisotropic interpolation error estimates and which preserves the homogeneous Dirichlet data. The discrete space~$V_h$ is given as discussed in Sec.~\ref{subsec:DiscretizationSpace} with basis functions~$\psi_i$. Let $v\in H^1_D(\Omega)$, as usual we define
\begin{equation}\label{eq:QuasiInterpolation}
  \mathfrak Iv(\vec x) = \sum_{i=1}^{N_*}(\Pi_{\sigma_i}v)(\vec x_i)\;\psi_i(\vec x)
\end{equation}
for $\vec x\in\Omega$, where
$
  \Pi_{\sigma_i}: L_2(\sigma_i)\to\mathcal P^0(\sigma_i)
$
is the $L_2$-projection into the space of constants over~$\sigma_i$. The Cl\'ement and Scott-Zhang interpolation operators differ in the choice of~$\sigma_i$ and $N_*$.

\subsection{Cl\'ement-type interpolation}
\label{subsec:Clement}
The Cl\'ement interpolation operator~$\mathfrak I_C$ is defined as usual by~\eqref{eq:QuasiInterpolation}, where we choose $N_*=N_\mathrm{DoF}$ and $\sigma_i=\omega_i$. Thus, it is given as a linear combination of the basis functions~$\psi_i$ associated to the nodes in the interior of~$\Omega$ and the Neumann boundary~$\Gamma_N$. The expansion coefficients are chosen as average over the neighbourhood of the corresponding nodes. For $v\in H^1_D(\Omega)$, it is $\mathfrak I_Cv\in H^1_D(\Omega)$ by construction. 

Recall, that $I(K)$ and $I(E)$ denote the sets of indices of nodes which belong to the element~$K$ and the edge/face~$E$, respectively. Similarly, we denote by $I(\Gamma_D)$ the set of indices of the nodes which are located on the Dirichlet boundary~$\Gamma_D$. The following interpolation error estimates hold involving the neighbourhoods $\omega_K$ and $\omega_E$ of elements and edges/faces.

\begin{theorem}\label{th:ClementInterpolationEstimates}
Let~$\mathcal K_h$ be a regular anisotropic mesh with nodes~$\vec x_i$ as described above. Furthermore, let $\omega_i$ be the neighbourhood of~$\vec x_i$ and $K\in\mathcal K_h$. For $v\in H^1_D(\Omega)$, it is
\[
  \|v-\mathfrak I_Cv\|_{L_2(K)}\leq c\,\|A_K^{-\top}\nabla v\|_{L_2(\omega_K)},
\]
and for an edge/face $E\in\mathcal E_h$ with $E\subset\partial K\setminus\Gamma_D$
\[
  \|v-\mathfrak I_Cv\|_{L_2(E)}\leq c\,\,\frac{|E|^{1/2}}{|K|^{1/2}}\,\,\|A_{K}^{-\top}\nabla v\|_{L_2(\omega_E)},
\]
where the constants~$c$ only depend on the regularity parameters of the mesh.
\end{theorem}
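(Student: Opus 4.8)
The strategy is the standard Clément argument, but carried out after transporting each local patch to its reference configuration via the linear map $F_K$. The key facts I would assemble first are: (a) the partition-of-unity property $\sum_i \psi_i \equiv 1$ on $\Omega$, which still holds for the harmonic/VEM basis; (b) the stability of the nodal functions $\psi_i$ and of the local averaging operators $\Pi_{\omega_i}$; and (c) the anisotropic best-approximation estimate of Lemma~\ref{lem:AnisotropicPoincare} together with the anisotropic trace inequality of Lemma~\ref{lem:AnisotropicTraceInequality}. The point is that, after mapping $\omega_K$ to $\widetilde\omega_K = F_K(\omega_K)$, Proposition~\ref{prop:regPerturbedMappingPatches} guarantees we are back in the regular isotropic setting, so classical Clément estimates on $\widetilde\omega_K$ apply with constants controlled by the perturbed regularity parameters.

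\textbf{Volume estimate.} Fix $K$ and a constant $p \in \mathbb R$ (to be chosen as $p = \Pi_{\omega_K}v$). Because $\mathfrak I_C$ reproduces constants on nodes not lying on $\Gamma_D$, and $v - p$ vanishes in the averaged sense, I would write $v - \mathfrak I_C v = (v-p) - \sum_{i} \big(\Pi_{\omega_i}(v-p)\big)(\vec x_i)\,\psi_i$ on $K$, where the sum runs over $i \in I(K)$ with $\vec x_i \notin \Gamma_D$ (for Dirichlet nodes the coefficient is zero and one uses that $v$ itself has vanishing trace — this is the point where one must check that dropping the $\Gamma_D$ terms is harmless, using $v\in H^1_D(\Omega)$; a short argument comparing with the case $\Gamma_D=\varnothing$ patch-by-patch handles it). Then by the triangle inequality, $L_\infty$-stability of the $\psi_i$ on $K$ (their number is uniformly bounded by Proposition~\ref{prop:BoundedNumOfNodesVSElems}), and $L_2$–$L_\infty$ stability of $\Pi_{\omega_i}$ on the mapped patches, one bounds $\|v-\mathfrak I_C v\|_{L_2(K)}$ by $c\sum_{i\in I(K)}\|v-p\|_{L_2(\omega_i)} \le c\,\|v - \Pi_{\omega_K}v\|_{L_2(\omega_K)}$, and Lemma~\ref{lem:AnisotropicPoincare} (second form) converts this into $c\,\|A_K^{-\top}\nabla v\|_{L_2(\omega_K)}$, after using Remark~\ref{rem:NeighbouringElements} to replace each $A_{K'}^{-\top}$ by $A_K^{-\top}$ on neighbouring $K'$. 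Keeping track of the stability constants of $\Pi_{\omega_i}$ and $\psi_i$ is best done on the reference patches $F_{K}(\omega_i)$, which are regular by Lemma~\ref{lem:regPerturbedMapping}.

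\textbf{Edge/face estimate.} For $E \subset \partial K \setminus \Gamma_D$, I would apply the anisotropic trace inequality (Lemma~\ref{lem:AnisotropicTraceInequality}) to the function $v - \mathfrak I_C v$ on the pair $(K,E)$:
\[
 \|v-\mathfrak I_C v\|_{L_2(E)}^2 \le c\,\frac{|E|}{|K|}\Big(\|v-\mathfrak I_C v\|_{L_2(K)}^2 + \|A_K^{-\top}\nabla(v-\mathfrak I_C v)\|_{L_2(K)}^2\Big).
\]
The first term in the bracket is already controlled by the volume estimate. For the second, one needs an $H^1$-type (in the mapped metric) estimate of the Clément error on $K$; this is obtained exactly as in the volume case by working on $\widehat K$, using the stability of $\widehat\nabla\psi_i$ and of $\Pi_{\omega_i}$ there, giving $\|A_K^{-\top}\nabla(v-\mathfrak I_C v)\|_{L_2(K)} \le c\,\|A_K^{-\top}\nabla v\|_{L_2(\omega_K)}$ (again via Lemma~\ref{lem:AnisotropicPoincare} and Remark~\ref{rem:NeighbouringElements}). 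Since $\omega_K \subset \omega_E$ is false in general but $\omega_K$ and $\omega_E$ are comparable — actually one should run the volume-type argument directly over $\omega_E$ — the cleanest route is to redo the two displays above with $\omega_E$ in place of $\omega_K$, which is legitimate because every node of $E$ belongs to $I(E)$ and the averaging sets stay inside $\omega_E$. Collecting terms and taking square roots gives the claimed $c\,|E|^{1/2}|K|^{-1/2}\,\|A_K^{-\top}\nabla v\|_{L_2(\omega_E)}$.

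\textbf{Main obstacle.} The routine parts (partition of unity, counting of nodes, trace inequality) are all in place; the genuine difficulty is the \emph{anisotropic} stability of the building blocks — showing that $\|A_K^{-\top}\nabla(\Pi_{\omega_i}v\cdot\psi_i)\|_{L_2(K)}$ and $\|v-p\|_{L_2(\omega_i)}$ transform correctly under $F_K$ and that, crucially, one may swap the element-dependent scaling matrices $A_{K'}^{-\top}$ on neighbouring elements for the single $A_K^{-\top}$ without losing uniform control. That swap is exactly what Definition~\ref{def:reg_anisotropic_mesh} and Remark~\ref{rem:NeighbouringElements} are designed to permit, so the proof hinges on invoking them at the right moments; the Dirichlet-node bookkeeping (why discarding the $i\in I(\Gamma_D)$ terms costs nothing for $v\in H^1_D$) is the secondary subtlety.
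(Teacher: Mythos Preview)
Your volume estimate is fine and matches the standard Clément argument the paper alludes to. The edge/face estimate, however, takes a different route from the paper and contains a genuine gap.

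You apply the anisotropic trace inequality to the full interpolation error $v-\mathfrak I_Cv$, which forces you to control
\[
 \|A_K^{-\top}\nabla(v-\mathfrak I_Cv)\|_{L_2(K)}.
\]
Expanding via the partition of unity, this produces terms of the form $(v-\Pi_{\omega_i}v)\,A_K^{-\top}\nabla\psi_i$, so you need the stability of $A_K^{-\top}\nabla\psi_i$ on $K$, i.e.\ of $\widehat\nabla\widehat\psi_i$ on $\widehat K$ with $\widehat\psi_i=\psi_i\circ F_K^{-1}$. But $\widehat\psi_i$ is \emph{not} the harmonic/VEM basis function on $\widehat K$: harmonicity is not preserved by the affine map $F_K$ unless $A_K^\top A_K$ is a multiple of the identity. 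On $\widehat K$, the function $\widehat\psi_i$ solves an anisotropic elliptic equation whose coefficient matrix has condition number $\lambda_{K,1}/\lambda_{K,d}$, and the crude energy comparison only gives $\|\widehat\nabla\widehat\psi_i\|_{L_2(\widehat K)}^2\le c\,\lambda_{K,1}/\lambda_{K,d}$, which blows up with the anisotropy. Neither Definition~\ref{def:reg_anisotropic_mesh} nor Remark~\ref{rem:NeighbouringElements} helps here: those control the swap $A_{K'}\leftrightarrow A_K$ between neighbouring elements, not the gradient of the basis functions.

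The paper sidesteps this entirely. Instead of tracing $v-\mathfrak I_Cv$, it first writes $v-\mathfrak I_Cv=\sum_{i\in I(E)}\psi_i(v-\Pi_{\omega_i}v)$ on $E$, uses $\|\psi_i\|_{L_\infty(E)}=1$, and applies Lemma~\ref{lem:AnisotropicTraceInequality} to each $v-\Pi_{\omega_i}v$ separately. Since $\Pi_{\omega_i}v$ is a constant, the gradient term in the trace inequality is simply $\|A_K^{-\top}\nabla v\|_{L_2(K)}$, and no control of $\nabla\psi_i$ is ever required. The Dirichlet case $I(E)\cap I(\Gamma_D)\neq\varnothing$ is then handled by the explicit identity $|\Pi_{\omega_i}v|=|E'|^{-1/2}\|v-\Pi_{\omega_i}v\|_{L_2(E')}$ for some $E'\subset\Gamma_D$ with $v|_{E'}=0$, followed by another application of the trace inequality; your ``short argument comparing with the case $\Gamma_D=\varnothing$'' is too vague at this point.
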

\begin{proof}
We can follow classical arguments as for isotropic meshes, cf.~\cite{Verfuerth1996}, and see the adaptation to polygonal meshes in~\cite{Weisser2011}. The main ingredients are the observation that the basis functions~$\psi_i$ form a partition of unity, i.e.\ $\sum_{i\in I(K)}\psi_i=1$ on~$\overline K$, and that they fulfil $\|\psi_i\|_{L_\infty(\overline K)}=1$. Furthermore, anisotropic approximation estimates, see Lemma~\ref{lem:AnisotropicPoincare}, the trace inequality in Lemma~\ref{lem:AnisotropicTraceInequality}, Lemma~\ref{lem:UniformBounds} and Remark~\ref{rem:NeighbouringElements} are employed. We only sketch the proof of the second estimate.

The partition of unity property is used, which also holds on each edge/face~$E$, i.e.\ $\sum_{i\in I(E)}\psi_i=1$ on~$\overline E$.
We distinguish two cases, first let $I(E)\cap I(\Gamma_D)=\varnothing$. With the help of Lemma~\ref{lem:AnisotropicTraceInequality} and Lemma~\ref{lem:AnisotropicPoincare}, we obtain
\begin{eqnarray*}
 \|v-\mathfrak I_Cv\|_{L_2(E)} 
 &   =  & \sum_{i\in I(E)}\|\psi_i(v-\Pi_{\omega_i}v)\|_{L_2(E)}\,\,
   \leq   \sum_{i\in I(E)}\|v-\Pi_{\omega_i}v\|_{L_2(E)}\\
 & \leq & c\sum_{i\in I(E)}\frac{|E|^{1/2}}{|K|^{1/2}}\left(\|v-\Pi_{\omega_i}v\|_{L_2(K)}^2+\|A_{K}^{-\top}\nabla v\|_{L_2({K})}^2\right)^{1/2}\\
 & \leq & c\sum_{i\in I(E)}\frac{|E|^{1/2}}{|K|^{1/2}}\,\,\|A_{K}^{-\top}\nabla v\|_{L_2(\omega_i)}.
\end{eqnarray*}
For the second case $I(E)\cap I(\Gamma_D)\neq\varnothing$, we find 
\begin{equation}\label{eq:Est2ClementNearDirichleBdry}
 \|v-\mathfrak I_Cv\|_{L_2(E)}  \leq \sum_{i\in I(E)}\|\psi_i(v-\Pi_{\omega_i}v)\|_{L_2(E)} + \sum_{i\in I(E)\cap I(\Gamma_D)}\|\psi_i\Pi_{\omega_i}v\|_{L_2(E)}.
\end{equation}
The first sum has already been estimated, thus we consider the term in the second sum. For $i\in I(E)\cap I(\Gamma_D)$, i.e.\ $\vec x_i\in\overline\Gamma_D$, there is an element $K'\subset\omega_i$ and an edge/face $E'\subset\overline{K'}\cap\Gamma_D$ such that $\vec x_i\in\overline{E'}$. Since $v$ vanishes on $E'$, Lemmata~\ref{lem:AnisotropicTraceInequality} and~\ref{lem:AnisotropicPoincare} as well as Remark~\ref{rem:NeighbouringElements} yield
\[
 \vert \Pi_{\omega_i}v\vert 
 = |E'|^{-1/2}\,\|v-\Pi_{\omega_i}v\|_{L_2(E')}
 \leq c\,|K^\prime|^{-1/2}\,\|A_{K}^{-\top}\nabla v\|_{L_2(\omega_i)}.
\]
Because $|K^\prime|/|K|$ is uniformly bounded according to Lemma~\ref{lem:UniformBounds}, we obtain
\[
 \|\psi_i\Pi_{\omega_i}v\|_{L_2(E)} 
 \leq \vert \Pi_{\omega_i}v\vert\,\|\psi_i\|_{L_\infty(E)}\,\vert E\vert^{1/2} 
 \leq c\,\frac{|E|^{1/2}}{|K|^{1/2}}\,\|A_{K}^{-\top}\nabla v\|_{L_2(\omega_i)}.
\]
Finally, since the number of nodes per element is uniformly bounded according to Proposition~\ref{prop:BoundedNumOfNodesVSElems}, this estimate as well as the one derived in the first case applied to~\eqref{eq:Est2ClementNearDirichleBdry} yield the second interpolation error estimate in the theorem.
\end{proof}

\begin{remark}\label{rem:CaseIsotropic}
 In the case of an isotropic polytopal element~$K$ with edge/face $E$ it is
 \[
  \lambda_1\approx\ldots\approx\lambda_d\sim h_K^2, 
  \quad\mbox{and thus}\quad
  \alpha_K \sim 1.
 \]
 Therefore, we obtain from Theorem~\ref{th:ClementInterpolationEstimates} with $A_K^{-\top}=\alpha_K^{-1}\Lambda_K^{1/2}U_K^\top$ that
 \[
   \|v-\mathfrak I_Cv\|_{L_2(K)}
   \leq ch_K\,\|U_K^\top\nabla v\|_{L_2(\omega_K)}
    =   ch_K\,|v|_{H^1(\omega_K)},
 \]
 and
 \[
  \|v-\mathfrak I_Cv\|_{L_2(E)}
  \leq c\,\,\frac{h_K|E|^{1/2}}{|K|^{1/2}}\,\,\|U_{K}^\top\nabla v\|_{L_2(\omega_E)}
  \leq ch_E^{1/2}\,|v|_{H^1(\omega_E)},
 \]
 since $|E|\leq h_E^{d-1}$ as well as $|K|\geq ch_K^d$ and $h_K\leq ch_E$ in consequence of Definition~\ref{def:reg_isotropic_mesh}.
 Obviously, we recover the classical interpolation error estimates for the Cl\'ement interpolation operator, cf., e.g., \cite{Verfuerth1996,Weisser2017}.
\end{remark}

In the following, we rewrite our results in order to compare them with the work of Formaggia and Perotto~\cite{FormaggiaPerotto2003}. 
It is $A_K^{-\top}=\alpha_K^{-1}\Lambda_K^{1/2}U_K^\top$ with $U_K=(\vec u_{K,1},\ldots, \vec u_{K,d})$. Thus, we observe
\[
 \|A_K^{-\top}\nabla v\|_{L_2(\omega_K)}^2
 = \alpha_K^{-2} \sum_{j=1}^d
    \lambda_{K,j}\,\|\vec u_{K,j}\cdot\nabla v\|_{L_2(\omega_K)}^2,
\]
and since $\vec u_{K,j}\cdot\nabla v\in\mathbb R$, we obtain
\[
  \|\vec u_{K,j}\cdot\nabla v\|_{L_2(\omega_K)}^2
    =   \sum_{K^\prime\subset\omega_K}\int_{K^\prime} \vec u_{K,j}^\top\nabla v(\nabla v)^\top\vec u_{K,j}\,d\vec x
    =   \vec u_{K,j}^\top\, G_K(v)\, \vec u_{K,j}
\]
with
\[
  G_K(v)
  = \sum_{K^\prime\subset\omega_K}
    \left(
      \int_{K^\prime} \frac{\partial v}{\partial x_i}\frac{\partial v}{\partial x_j}\,d\vec x 
    \right)_{i,j=1}^d\in\mathbb R^{d\times d},
  \quad
  \vec x = (x_1,\ldots,x_d)^\top.
\]
Therefore, we can deduce from Theorem~\ref{th:ClementInterpolationEstimates} an equivalent formulation.
\begin{proposition}\label{prop:ClementInterpolationEstimates}
Let $K\in\mathcal K_h$ be a polytopal element of a regular anisotropic mesh. The Cl\'ement interpolation operator fulfils for $v\in H^1(\Omega)$ the interpolation error estimate
\[
  \|v-\mathfrak I_Cv\|_{L_2(K)}
   \leq c\, \alpha_K^{-1}\left(
      \sum_{j=1}^d \lambda_{K,j}\,\vec u_{K,j}^\top\, G_K(v)\, \vec u_{K,j}
     \right)^{1/2},
\]
and
\[
  \|v-\mathfrak I_Cv\|_{L_2(E)}
   \leq c\, \alpha_K^{-1}\,\,\frac{|E|^{1/2}}{|K|^{1/2}}\left(
      \sum_{j=1}^d\lambda_{K,j}\,\vec u_{K,j}^\top\, G_K(v)\, \vec u_{K,j}
     \right)^{1/2},
\]
where the constant~$c$ only depends on the regularity parameters of the mesh.
\end{proposition}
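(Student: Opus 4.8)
The plan is to deduce the proposition directly from Theorem~\ref{th:ClementInterpolationEstimates} by rewriting the anisotropic norm $\|A_K^{-\top}\nabla v\|_{L_2(\omega_K)}$ through the matrix $G_K(v)$, exactly as prepared in the discussion preceding the statement; no new analytic ingredient is needed.

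First I would use that $A_K^{-\top}=\alpha_K^{-1}\Lambda_K^{1/2}U_K^\top$, that $U_K$ is orthogonal, and that its columns are the eigenvectors $\vec u_{K,1},\ldots,\vec u_{K,d}$. Applying $U_K^\top$ to $\nabla v$ produces the vector with entries $\vec u_{K,j}\cdot\nabla v$, the subsequent multiplication by $\Lambda_K^{1/2}$ scales the $j$-th entry by $\lambda_{K,j}^{1/2}$, and squaring and integrating over $\omega_K$, using $(\vec u_{K,j}\cdot\nabla v)^2=\vec u_{K,j}^\top\,\nabla v(\nabla v)^\top\,\vec u_{K,j}$, yields
\[
 \|A_K^{-\top}\nabla v\|_{L_2(\omega_K)}^2
 =\alpha_K^{-2}\sum_{j=1}^d\lambda_{K,j}\,\vec u_{K,j}^\top\,G_K(v)\,\vec u_{K,j}.
\]
Substituting this identity into the element estimate of Theorem~\ref{th:ClementInterpolationEstimates} gives the first inequality, with the constant inherited unchanged.

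For the edge/face estimate the only extra observation is the inclusion $\omega_E\subseteq\omega_K$ whenever $E\subset\partial K$, which follows from $I(E)\subseteq I(K)$ and the definition of the neighbourhoods as unions of the nodal patches $\omega_i$; hence $\|A_K^{-\top}\nabla v\|_{L_2(\omega_E)}\leq\|A_K^{-\top}\nabla v\|_{L_2(\omega_K)}$. Combining this monotonicity with the edge estimate of Theorem~\ref{th:ClementInterpolationEstimates} and the identity above produces the second inequality, the factor $|E|^{1/2}/|K|^{1/2}$ being carried over verbatim, and the constant again depending only on the mesh regularity parameters. Since the whole argument is merely a linear-algebra reformulation of an estimate already in hand, I do not expect a genuine obstacle; the only points deserving a line of care are the enlargement of the integration domain from $\omega_E$ to the larger $\omega_K$ in the edge case and the book-keeping that $G_K(v)$ is assembled over the elements of $\omega_K$ in both bounds.
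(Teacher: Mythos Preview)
Your proposal is correct and follows essentially the same route as the paper: the proposition is just Theorem~\ref{th:ClementInterpolationEstimates} rewritten using the algebraic identity $\|A_K^{-\top}\nabla v\|_{L_2(\omega_K)}^2=\alpha_K^{-2}\sum_{j}\lambda_{K,j}\,\vec u_{K,j}^\top G_K(v)\,\vec u_{K,j}$, which is exactly the computation carried out in the paragraph preceding the proposition. Your explicit mention of the inclusion $\omega_E\subseteq\omega_K$ (needed because $G_K(v)$ is assembled over $\omega_K$ while the edge estimate of the theorem lives on $\omega_E$) is a detail the paper leaves implicit, but it is precisely the right observation.
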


Now we are ready to compare the interpolation error estimates with the ones derived by Formaggia and Perotto. These authors considered the case of anisotropic triangular meshes, i.e.\ $d=2$. The inequalities in Proposition~\ref{prop:ClementInterpolationEstimates} correspond to the derived estimates~(2.12) and~(2.15) in~\cite{FormaggiaPerotto2003} but they are valid on much more general meshes. When comparing these estimates to the results of Formaggia and Perotto, one has to take care on the powers of the lambdas. The triangular elements in their works are scaled with $\lambda_{i,K}$, $i=1,2$ in the characteristic directions whereas the scaling in this paper is $\lambda_{K,i}^{1/2}$, $i=1,2$.

Obviously, the first inequality of the previous proposition corresponds to the derived estimate~(2.12) in~\cite{FormaggiaPerotto2003} up to the scaling factor $\alpha_K^{-1}$. However, for convex elements the assumption 
\[
 \alpha_K \sim 1,
 \qquad\mbox{i.e.,}\qquad
 |K| \sim \sqrt{\lambda_{K,1}\lambda_{K,2}},
\]
seems to be convenient, since this means that the area of the element~$|K|$ is proportional to the area of the inscribed ellipse $\pi\sqrt{\lambda_{K,1}}\sqrt{\lambda_{K,2}}$, which is given by the scaled characteristic directions of the element. 

In order to recognize the relation of the second inequality under these assumptions, we estimate the term $|E|/|K|$ by~\eqref{eq:BoundEbyK1} and by applying $h_{P,E}\geq\lambda_{K,2}^{1/2}$.
This yields
\[
  \|v-\mathfrak I_Cv\|_{L_2(E)}
   \leq c\left(\frac{1}{\lambda_{K,2}^{1/2}}\right)^{1/2}\left(
      \lambda_{K,1}\,\vec u_{K,1}^\top\, G_K(v)\, \vec u_{K,1}+
      \lambda_{K,2}\,\vec u_{K,2}^\top\, G_K(v)\, \vec u_{K,2}
     \right)^{1/2},
\]
and shows the correspondence to~\cite{FormaggiaPerotto2003}, since $h_K$ and $\lambda_{1,K}$ are proportional in the referred work.

\subsection{Scott-Zhang-type interpolation}
The Scott-Zhang interpolation operator $\mathfrak I_{SZ}:H^1(\Omega)\to V_h$ is defined as usual by~\eqref{eq:QuasiInterpolation}, where we choose $N_*=N$ and $\sigma_i=E$, where $E\in\mathcal E_h$ is an edge ($d=2$) or face ($d=3$) with $\vec x_i\in\overline E$ and
\[
   E\subset\Gamma_D \mbox{ if }  \vec x_i\in\overline\Gamma_D 
  \qquad\mbox{and}\qquad
   E\subset\Omega\cup\Gamma_N \mbox{ if }  \vec x_i\in\Omega\cup\Gamma_N .
\]
Thus, the interpolation is given as a linear combination of all basis functions~$\psi_i$. The expansion coefficients are chosen as average over edges and faces. For $v\in H^1_D(\Omega)$, it is $\mathfrak I_Cv\in H^1_D(\Omega)$ by construction such that homogeneous Dirichlet data is preserved. 
We have the following local stability result, which can be utilized to derive interpolation error estimates.
\begin{lemma}\label{lem:LocalStabilitySZ}
Let $K\in\mathcal K_h$ be a polytopal element of a regular anisotropic mesh. The Scott-Zhang interpolation operator fulfils for $v\in H^1(\Omega)$ the local stability
\[
 \|\mathfrak I_{SZ}v\|_{L_2(K)} 
 \leq c\left(\|v\|_{L_2(\omega_K)}+\|A_{K}^{-\top}\nabla v\|_{L_2(\omega_K)}\right),
\]
where the constant~$c$ only depends on the regularity parameters of the mesh.
\end{lemma}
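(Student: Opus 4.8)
The plan is to reduce the local stability estimate on the anisotropic element~$K$ to a known stability estimate on the regular reference configuration~$\widehat K=F_K(K)$ and its surrounding patch, exactly as was done for the trace inequality in Lemma~\ref{lem:AnisotropicTraceInequality} and for the best-approximation bounds in Lemma~\ref{lem:AnisotropicPoincare}. First I would apply the transformation~\eqref{eq:trafo} with $\widehat v=v\circ F_K^{-1}$. Since the $L_2$-projection onto constants over an edge/face commutes with the affine change of variables (the mean of a function over $E$ equals the mean of $\widehat v$ over $\widehat E$), the mapped interpolant $\widetilde{\mathfrak I_{SZ}v}$ is precisely the Scott-Zhang interpolant of~$\widehat v$ built over the mapped patch $\widetilde\omega_K=F_K(\omega_K)$, using the mapped edges/faces. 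By Proposition~\ref{prop:regPerturbedMappingPatches} the patch $\widetilde\omega_K$ consists of regular polytopal elements with the perturbed regularity parameters $\widetilde\sigma_{\mathcal K}$, $\widetilde c_{\mathcal K}$, so the classical local stability of the Scott-Zhang operator on such (isotropic, shape-regular) polytopal patches applies; see the arguments in~\cite{ScottZhang1990,Weisser2011,Weisser2017} together with the uniform bound on the number of nodes per element from Proposition~\ref{prop:BoundedNumOfNodesVSElems} and the uniform bound $h_{\widetilde\omega_K}\leq c$ from Lemma~\ref{lem:UniformBounds}. This gives
\[
 \|\widetilde{\mathfrak I_{SZ}v}\|_{L_2(\widehat K)}
 \leq \|\widetilde{\mathfrak I_{SZ}v}\|_{L_2(\widetilde\omega_K)}
 \leq c\left(\|\widehat v\|_{L_2(\widetilde\omega_K)}+|\widehat v|_{H^1(\widetilde\omega_K)}\right),
\]
with $c$ depending only on the perturbed regularity parameters, hence only on the regularity of~$\mathcal K_h$.

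Next I would transform back. Using $|K|=|\widehat K|\,|\det A_K|^{-1}$ and the analogous identity on each sub-element of the patch (the Jacobian $|\det A_K|$ is constant over the whole patch since $F_K$ is a single affine map), the $L_2$-norms pick up the common factor $|\det A_K|^{-1/2}$ on both sides, which cancels in the estimate $\|\widetilde{\mathfrak I_{SZ}v}\|_{L_2(\widehat K)}\rightsquigarrow\|\mathfrak I_{SZ}v\|_{L_2(K)}$ and $\|\widehat v\|_{L_2(\widetilde\omega_K)}\rightsquigarrow\|v\|_{L_2(\omega_K)}$. For the gradient term one uses the chain rule $\widehat\nabla\widehat v=A_K^{-\top}\nabla v$ (composed with $F_K^{-1}$), so that $|\widehat v|_{H^1(\widetilde\omega_K)}=|\det A_K|^{-1/2}\|A_K^{-\top}\nabla v\|_{L_2(\omega_K)}$, again with the Jacobian factor cancelling. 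Putting the pieces together yields exactly
\[
 \|\mathfrak I_{SZ}v\|_{L_2(K)}\leq c\left(\|v\|_{L_2(\omega_K)}+\|A_K^{-\top}\nabla v\|_{L_2(\omega_K)}\right).
\]

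The one point requiring care — and the main obstacle — is the passage through the patch: the Scott-Zhang interpolant of~$v$ on~$\omega_K$ uses, for nodes $\vec x_j$ lying in neighbouring elements $K'\subset\omega_K$, averages over edges/faces of~$K'$, and the local stability constant on such an edge involves the dual basis function on $K'$ whose $L_2(E')$-norm scales like $|E'|^{-1/2}$. After mapping by $F_K$ (not by $F_{K'}$), $K'$ becomes $\widetilde K'=F_K(K')$, which is regular only because of Definition~\ref{def:reg_anisotropic_mesh}(ii) and Lemma~\ref{lem:regPerturbedMapping}; one must check that the dual-basis estimates transform with constants controlled purely by $\widetilde\sigma_{\mathcal K},\widetilde c_{\mathcal K}$, and that the ratios $|K'|/|K|$ entering when one writes $\|A_K^{-\top}\nabla v\|_{L_2(K')}$ in place of $\|A_{K'}^{-\top}\nabla v\|_{L_2(K')}$ are uniformly bounded — this is Lemma~\ref{lem:UniformBounds} together with Remark~\ref{rem:NeighbouringElements}. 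Once those uniform bounds are invoked, the proof is a routine transformation argument; the only genuinely new input beyond the isotropic Scott-Zhang theory is the book-keeping of the anisotropic scaling matrix~$A_K$, which is handled exactly as in the proof of Lemma~\ref{lem:AnisotropicPoincare}.
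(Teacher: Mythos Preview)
Your argument is correct but takes a genuinely different route from the paper's. The paper does \emph{not} transform to the reference configuration here; instead it works directly on the anisotropic element~$K$. Concretely, the paper bounds each coefficient $|\Pi_{\sigma_i}v|=|\sigma_i|^{-1/2}\|\Pi_{\sigma_i}v\|_{L_2(\sigma_i)}\leq|\sigma_i|^{-1/2}\|v\|_{L_2(\sigma_i)}$ by the anisotropic trace inequality of Lemma~\ref{lem:AnisotropicTraceInequality} applied to the element $K_i\subset\omega_K$ with $\sigma_i\subset\partial K_i$, giving $|\Pi_{\sigma_i}v|\leq c\,|K_i|^{-1/2}(\|v\|_{L_2(K_i)}^2+\|A_{K_i}^{-\top}\nabla v\|_{L_2(K_i)}^2)^{1/2}$. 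It then estimates $\|\mathfrak I_{SZ}v\|_{L_2(K)}\leq\sum_{i\in I(K)}|\Pi_{\sigma_i}v|\,\|\psi_i\|_{L_\infty(K)}|K|^{1/2}$, absorbs the volume ratio $|K|/|K_i|$ via Lemma~\ref{lem:UniformBounds}, and passes from $A_{K_i}^{-\top}$ to $A_K^{-\top}$ via Remark~\ref{rem:NeighbouringElements}. So the anisotropic trace inequality already encodes the mapping argument, and no further transformation is needed.

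Your route---map the whole patch by $F_K$, invoke an isotropic Scott-Zhang stability on $\widetilde\omega_K$, and map back---is equally valid and arguably more in the spirit of classical reference-element arguments. One small point you should make explicit: the mapped basis functions $\widehat\psi_i=\psi_i\circ F_K^{-1}$ are \emph{not} harmonic on $\widehat K$ (they satisfy $\operatorname{div}(A_KA_K^\top\widehat\nabla\widehat\psi_i)=0$ instead), so you are not literally applying ``the'' Scott-Zhang operator of $V_h(\widehat K)$. This is harmless because the only properties of $\psi_i$ used in the stability proof are $\|\psi_i\|_{L_\infty}=1$ and the nodal/partition-of-unity structure, all of which are preserved under affine maps---but it means you cannot simply cite an off-the-shelf result for the harmonic space and must instead note that the standard argument goes through verbatim for the mapped basis. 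The paper's direct approach sidesteps this subtlety entirely, at the price of having first proved the anisotropic trace inequality; your approach trades that lemma for a (routine) verification that the isotropic Scott-Zhang argument is insensitive to the specific PDE defining the basis.
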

\begin{proof}
Due to the stability of the $L_2$-projection $\Pi_{\sigma_i}$ 
we have $\|\Pi_{\sigma_i}v\|_{L_2(\sigma_i)}\leq\|v\|_{L_2(\sigma_i)}$. Furthermore, there exists $K_i\in\mathcal K_h$ with $\sigma_i\subset\partial K_i$ such that $K_i\subset\omega_K$. Therefore, we obtain with the anisotropic trace inequality, see Lemma~\ref{lem:AnisotropicTraceInequality},
\begin{eqnarray*}
 |\Pi_{\sigma_i}v| 
 & = &    |\sigma_i|^{-1/2}\;\|\Pi_{\sigma_i}v\|_{L_2(\sigma_i)}\\
 & \leq & c\;|K_i|^{-1/2}\left(\|v\|_{L_2(K_i)}^2+\|\alpha_{K_i}^{-1}\Lambda_{K_i}^{1/2}U_{K_i}^\top\nabla v\|_{L_2(K_i)}^2\right)^{1/2},
\end{eqnarray*}
since $\sigma_i=E\in\mathcal E_h$.
Utilizing this estimate and $\|\psi_i\|_{L_\infty(K)}=1$ yields
\begin{eqnarray*}
 \|\mathfrak I_{SZ}v\|_{L_2(K)} 
 & \leq & \sum_{i\in I(K)} \|(\Pi_{\sigma_i}v)(\vec x_i)\;\psi_i\|_{L_2(K)} \\
 & \leq & \sum_{i\in I(K)} |(\Pi_{\sigma_i}v)|\;\|\psi_i\|_{L_\infty(K)}\;|K|^{1/2} \\
 & \leq & c\sum_{i\in I(K)} \left(\|v\|_{L_2(K_i)}^2+\|A_{K_i}^{-\top}\nabla v\|_{L_2(K_i)}^2\right)^{1/2},
\end{eqnarray*}
where we have used $|K|/|K_i|\leq c$ according to Lemma~\ref{lem:UniformBounds}. Applying the Cauchy-Schwarz inequality, Remark~\ref{rem:NeighbouringElements} and exploiting that the number of nodes per element is uniformly bounded, see Proposition~\ref{prop:BoundedNumOfNodesVSElems}, finishes the proof.
\end{proof}

\begin{theorem}
Let $K\in\mathcal K_h$ be a polytopal element of a regular anisotropic mesh. The Scott-Zhang interpolation operator fulfils for $v\in H^1(\Omega)$ the interpolation error estimate
\[
  \|v-\mathfrak I_{SZ}v\|_{L_2(K)} \leq \|A_{K}^{-\top}\nabla v\|_{L_2(\omega_K)},
\]
where the constant~$c$ only depends on the regularity parameters of the mesh.
\end{theorem}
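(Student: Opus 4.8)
The plan is to exploit that $\mathfrak I_{SZ}$ reproduces constants and then to combine the local stability estimate of Lemma~\ref{lem:LocalStabilitySZ} with the anisotropic Poincar\'e inequality of Lemma~\ref{lem:AnisotropicPoincare}.

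First I would check that $\mathfrak I_{SZ}p = p$ on $\Omega$ for every constant $p$. Indeed, $\Pi_{\sigma_i}p = p$ since averaging over $\sigma_i$ preserves constants, and $\sum_{i=1}^N\psi_i\equiv 1$ on $\overline\Omega$ because the constant function belongs to $V_h$ and is its own nodal interpolant; hence $\mathfrak I_{SZ}p = \sum_{i=1}^N p\,\psi_i = p$. Consequently, for every constant $p$,
\[
  v - \mathfrak I_{SZ}v = (v-p) - \mathfrak I_{SZ}(v-p).
\]
Choosing $p=\Pi_{\omega_K}v$ and using the triangle inequality together with $K\subset\omega_K$ gives
\[
  \|v-\mathfrak I_{SZ}v\|_{L_2(K)}
  \leq \|v-p\|_{L_2(\omega_K)} + \|\mathfrak I_{SZ}(v-p)\|_{L_2(K)}.
\]

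For the second term I would invoke Lemma~\ref{lem:LocalStabilitySZ} applied to $v-p$ in place of $v$; since $\nabla p = 0$, this yields
\[
  \|\mathfrak I_{SZ}(v-p)\|_{L_2(K)}
  \leq c\left(\|v-p\|_{L_2(\omega_K)} + \|A_K^{-\top}\nabla v\|_{L_2(\omega_K)}\right).
\]
It then remains only to control $\|v-\Pi_{\omega_K}v\|_{L_2(\omega_K)}$, which is exactly the content of Lemma~\ref{lem:AnisotropicPoincare} with $\omega=\omega_K$: it is bounded by $c\|A_K^{-\top}\nabla v\|_{L_2(\omega_K)}$. Collecting the three bounds finishes the proof, with the anisotropy entering only through the matrix $A_K$ already built into Lemmata~\ref{lem:LocalStabilitySZ} and~\ref{lem:AnisotropicPoincare}.

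The only genuinely delicate point is the constant-reproduction property: one must make sure that the particular choice of the facets $\sigma_i$ (in particular $\sigma_i\subset\Gamma_D$ for the Dirichlet nodes) does not spoil $\mathfrak I_{SZ}p=p$, and that the nodal basis $\{\psi_i\}_{i=1}^N$ really forms a global partition of unity on $\overline\Omega$ so that this reproduction holds up to and including the boundary. Everything else is a routine assembly of results already established in the preceding sections.
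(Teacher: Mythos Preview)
Your proposal is correct and follows essentially the same route as the paper: choose $p=\Pi_{\omega_K}v$, use $\mathfrak I_{SZ}p=p$, apply the triangle inequality, then invoke Lemma~\ref{lem:LocalStabilitySZ} for the stability term and Lemma~\ref{lem:AnisotropicPoincare} for the Poincar\'e term. The paper's proof is a bit terser (it simply asserts $p=\mathfrak I_{SZ}p$ without the partition-of-unity justification you spell out), but the logic is identical.
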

\begin{proof}
For $p=\Pi_{\omega_K}v\in\mathbb R$ it is obviously $p=\mathfrak I_{SZ}p$ and $\nabla p = 0$.
The estimate in the theorem follows by Lemma~\ref{lem:LocalStabilitySZ} and the application of Lemma~\ref{lem:AnisotropicPoincare}, since
\begin{eqnarray*}
 \|v-\mathfrak I_{SZ}v\|_{L_2(K)} 
 & \leq & \|v-p\|_{L_2(K)} + \|\mathfrak I_{SZ}(v-p)\|_{L_2(K)} \\
 & \leq & c\left(\|v-p\|_{L_2(\omega_K)}+
                   \|A_{K}^{-\top}\nabla v\|_{L_2(\omega_K)}\right)\\
 & \leq & c\,\|A_{K}^{-\top}\nabla v\|_{L_2(\omega_K)}.
\end{eqnarray*}
\end{proof}

\section{Pointwise interpolation of smooth functions}
\label{sec:Interpolation}
In the previous section, we considered quasi-interpolation of functions in $H^1(\Omega)$. However, we may also address classical interpolation employing point evaluations in the case that the function to be interpolated is sufficiently regular. This is possible for functions in $H^2(\Omega)$. In the following, we consider the pointwise interpolation of lowest order
\begin{equation}\label{eq:PWInterpolationOp}
 \mathfrak I_\mathrm{pw}v(\vec x) = \sum_{i=1}^N v(\vec x_i)\;\psi_i(\vec x)
\end{equation}
for $v\in H^2(\Omega)$ on anisotropic meshes. For the analysis it is sufficient to study the restriction of $\mathfrak I_\mathrm{pw}:H^2(\Omega)\to V_h$ onto a single element $K\in\mathcal K_h$ and we denote this restriction by the same symbol
\[
 \mathfrak I_\mathrm{pw}:H^2(K)\to V_h|_{K}.
\]
Furthermore, we make use of the mapping to and from the reference configuration, cf.\ \eqref{eq:trafo}.
As earlier, we mark the operators and functions defined over the reference configuration by a hat, as, for instance,
$\widehat v = v\circ F_K^{-1}:\widehat K \to K$.
We already used
$\nabla v = \alpha_KU_K\Lambda_K^{-1/2}\widehat\nabla \widehat v$,
and by employing some calculus we find
\begin{equation}\label{eq:IdentityHessian}
 \widehat H(\widehat v) = \alpha_K^{-2}\Lambda_K^{1/2}U_K^\top H(v)U_K\Lambda_K^{1/2},
\end{equation}
where $H(v)$ denotes the Hessian matrix of~$v\in H^2(\Omega)$ and $\widehat H(\widehat v)$ the corresponding Hessian on the reference configuration.
\begin{lemma}\label{lem:MapH1norm}
 Let $K\in\mathcal K_h$ be a polytopal element of a regular anisotropic mesh~$\mathcal K_h$. For $v\in H^1(K)$, it is
 \[
  \sqrt{\frac{\prod_{j=2}^d\lambda_{K,j}}{\lambda_{K,1}}}\;|\widehat v|_{H^1(\widehat K)}^2 
  \leq |v|_{H^1(K)}^2 
  \leq \sqrt{\frac{\prod_{j=1}^{d-1}\lambda_{K,j}}{\lambda_{K,d}}}\;|\widehat v|_{H^1(\widehat K)}^2.
 \]
\end{lemma}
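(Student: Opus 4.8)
The plan is to change variables from $K$ to $\widehat K$ in the integral defining $|v|_{H^1(K)}^2$, then control the resulting matrix factor by its extreme singular values. Writing $\widehat{\vec x} = F_K(\vec x) = A_K\vec x$ with $A_K = \alpha_K\Lambda_K^{-1/2}U_K^\top$, the chain rule gives $\nabla v(\vec x) = A_K^\top \widehat\nabla\widehat v(\widehat{\vec x})$, i.e. $\nabla v = \alpha_K U_K\Lambda_K^{-1/2}\widehat\nabla\widehat v$, which is already recorded above. Hence
\[
  |v|_{H^1(K)}^2 = \int_K |\nabla v(\vec x)|^2\,d\vec x
  = |\det(A_K)|^{-1}\int_{\widehat K} |A_K^\top\widehat\nabla\widehat v(\widehat{\vec x})|^2\,d\widehat{\vec x}.
\]

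\textbf{Second,} I would bound the integrand pointwise. For any vector $\widehat{\vec g}=\widehat\nabla\widehat v$ we have
$\sigma_{\min}(A_K^\top)^2|\widehat{\vec g}|^2 \le |A_K^\top\widehat{\vec g}|^2 \le \sigma_{\max}(A_K^\top)^2|\widehat{\vec g}|^2$,
where the singular values of $A_K^\top$ (equal to those of $A_K$) are $\alpha_K\lambda_{K,j}^{-1/2}$, $j=1,\dots,d$, since $U_K$ is orthogonal. Because $\lambda_{K,1}\ge\cdots\ge\lambda_{K,d}>0$, the largest is $\alpha_K\lambda_{K,d}^{-1/2}$ and the smallest is $\alpha_K\lambda_{K,1}^{-1/2}$. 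Therefore
\[
  \alpha_K^2\lambda_{K,1}^{-1}\,|\widehat v|_{H^1(\widehat K)}^2
  \;\le\; |\det(A_K)|\,|v|_{H^1(K)}^2
  \;\le\; \alpha_K^2\lambda_{K,d}^{-1}\,|\widehat v|_{H^1(\widehat K)}^2.
\]

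\textbf{Third,} I would substitute $|\det(A_K)| = \alpha_K^d/\sqrt{\prod_{j=1}^d\lambda_{K,j}}$ from Lemma~\ref{lem:PropTransform}. Dividing through, the factor $\alpha_K^2/|\det(A_K)| = \alpha_K^{2-d}\sqrt{\prod_{j=1}^d\lambda_{K,j}}$ appears; combined with the $\lambda_{K,1}^{-1}$ on the left this gives $\alpha_K^{2-d}\sqrt{\prod_{j=2}^d\lambda_{K,j}/\lambda_{K,1}}$ (after pulling one power of $\lambda_{K,1}^{1/2}$ out of the square root against the $\lambda_{K,1}^{-1}$... one must be careful here: $\sqrt{\prod_{j=1}^d\lambda_{K,j}}/\lambda_{K,1} = \sqrt{\prod_{j=1}^d\lambda_{K,j}}\,/\lambda_{K,1}$, and since $\lambda_{K,1}=\sqrt{\lambda_{K,1}^2}$ this equals $\sqrt{\prod_{j=2}^d\lambda_{K,j}/\lambda_{K,1}}$). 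Similarly the right side produces $\sqrt{\prod_{j=1}^{d-1}\lambda_{K,j}/\lambda_{K,d}}$. Thus for the claimed inequality to hold exactly as stated, the factor $\alpha_K^{2-d}$ must be absorbed, which forces $d=2$ or $\alpha_K$ to be treated as a normalizing factor — I would note that with the normalization $|\widehat K|=1$ from~\eqref{eq:Alpha} one has $\alpha_K^d = \sqrt{\prod\lambda_{K,j}}/|K|$, and more importantly one should present the two-sided bound with all powers collected, matching the statement; the statement as written corresponds precisely to $d=2,3$ after this bookkeeping because the $\alpha_K$ powers cancel once $|\det A_K|$ is inserted and the $\lambda$-products are regrouped.

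\textbf{The main obstacle} is purely the exponent bookkeeping: getting the powers of $\lambda_{K,j}$ and $\alpha_K$ to collapse into the clean symmetric form $\sqrt{\prod_{j\ne 1}\lambda_{K,j}/\lambda_{K,1}}$ on the left and $\sqrt{\prod_{j\ne d}\lambda_{K,j}/\lambda_{K,d}}$ on the right. The analytic content — change of variables plus the Rayleigh-quotient bound $\sigma_{\min}^2|\widehat{\vec g}|^2\le|A_K^\top\widehat{\vec g}|^2\le\sigma_{\max}^2|\widehat{\vec g}|^2$ — is completely routine and requires no regularity assumptions beyond $v\in H^1(K)$; the orthogonality of $U_K$ makes the singular values of $A_K$ transparent, so no hidden constant enters and the inequalities are sharp.
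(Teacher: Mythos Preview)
Your approach is exactly the paper's: transform to $\widehat K$, use $\nabla v=\alpha_K U_K\Lambda_K^{-1/2}\widehat\nabla\widehat v$ and orthogonality of $U_K$ to get $|v|_{H^1(K)}^2=|K|\alpha_K^2\sum_j\lambda_{K,j}^{-1}\|\partial_{\widehat x_j}\widehat v\|_{L_2(\widehat K)}^2$, then bound each $\lambda_{K,j}^{-1}$ by $\lambda_{K,1}^{-1}$ or $\lambda_{K,d}^{-1}$ and simplify the prefactor via the choice~\eqref{eq:Alpha}. The only difference is cosmetic: the paper writes the diagonal form explicitly rather than invoking singular values.

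Your hesitation about the $\alpha_K^{2-d}$ factor is in fact well placed. The paper closes the bookkeeping by asserting $|K|\alpha_K^2=\sqrt{\prod_{j=1}^d\lambda_{K,j}}$ from~\eqref{eq:Alpha}; but~\eqref{eq:Alpha} actually gives $|K|\alpha_K^{d}=\sqrt{\prod_{j}\lambda_{K,j}}$, so the identity used is correct for $d=2$ and leaves a stray factor $\alpha_K^{-1}$ when $d=3$. Thus your instinct that the clean form of the lemma is tied to $d=2$ (or requires an extra $\alpha_K^{2-d}$) is accurate --- the paper's proof and yours share the same minor slip for $d=3$, not a failure of your argument.
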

\begin{proof}
Applying the transformation to the reference configuration yields
\begin{eqnarray*}
 |v|_{H^1(K)}^2 
 &   =  & \|\nabla v\|_{L_2(K)}^2 
     =    |K|\;\|\alpha_KU_K\Lambda_K^{-1/2}\widehat\nabla \widehat v\|_{L_2(\widehat K)}^2\\ 
 &   =  & |K|\alpha_K^2\;\|\Lambda_K^{-1/2}\widehat\nabla \widehat v\|_{L_2(\widehat K)}^2 
     =    |K|\alpha_K^2 \sum_{j=1}^d \lambda_{K,j}^{-1}\left\|\frac{\partial\widehat v}{\partial\widehat x_j}\right\|_{L_2(\widehat K)}^2.
\end{eqnarray*}
Since $\lambda_{K,1}\geq\ldots\geq\lambda_{K,d}$, we obtain
\[
 \frac{|K|\alpha_K^2}{\lambda_{K,1}}\;|\widehat v|_{H^1(\widehat K)}^2
 \leq |v|_{H^1(K)}^2 
 \leq \frac{|K|\alpha_K^2}{\lambda_{K,d}}\;|\widehat v|_{H^1(\widehat K)}^2.
\]
Due to the choice~\eqref{eq:Alpha} for $\alpha_K$, it is
$|K|\alpha_K^2 = \sqrt{\prod_{j=1}^d\lambda_{K,j}}$,
that completes the proof.
\end{proof}

In order to derive interpolation error estimates, we make use of interpolation results on isotropic polytopal elements which are regular in the sense of Definition~\ref{def:reg_isotropic_mesh}, see, e.g., \cite{Weisser2014} or related works on VEM and generalized barycentric coordinates. First, we recognize the relation between the interpolation $\mathfrak I_\mathrm{pw}v$ transferred to the reference configuration~$\widehat K$ and the interpolation $\widehat{\mathfrak I}_\mathrm{pw}\widehat v$ defined directly on~$\widehat K$. Namely, it is
\begin{equation}\label{eq:PwIntOnKandHatK}
\widehat{\mathfrak I_\mathrm{pw}v} = \widehat{\mathfrak I}_\mathrm{pw}\widehat v,
\end{equation}
since only function evaluations in the nodes are involved. For the operator~$\widehat{\mathfrak I}_\mathrm{pw}$, we can apply known results. We use the convention that $H^0(K)=L_2(K)$.

\begin{theorem}\label{th:PointwiseInterpolationEstimate}
 Let $K\in\mathcal K_h$ be a polytopal element of a regular anisotropic mesh~$\mathcal K_h$. For $v\in H^2(\Omega)$, it is
 \[
  |v-\mathfrak I_\mathrm{pw}v|_{H^\ell(K)}^2
  \leq c \alpha_K^{-4}\,S_\ell(K)\sum_{i,j=1}^d\lambda_{K,i}\lambda_{K,j}L_K(\vec u_{K,i},\vec u_{K,j};v)
 \]
 with
 \[
  S_\ell(K) = \begin{cases}
                1, & \mbox{for } \ell=0,\\
                \displaystyle\frac{1}{|K|}\sqrt{\frac{\prod_{j=1}^{d-1}\lambda_{K,j}}{\lambda_{K,d}}}, & \mbox{for } \ell=1,\\
              \end{cases}
 \]
 where
 \[
  L_K(\vec u_{K,i},\vec u_{K,j};v) 
  = \int_K \left(\vec u_{K,i}^\top H(v)\vec u_{K,j}\right)^2d\vec x
  \quad\mbox{for } i,j=1,\ldots,d.
 \]
\end{theorem}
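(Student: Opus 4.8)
The plan is to transport everything to the regular reference configuration~$\widehat K = F_K(K)$, apply the classical pointwise interpolation estimate available there for isotropic polytopal elements, and then track the anisotropic scaling back to~$K$. Throughout put $w = v - \mathfrak I_\mathrm{pw}v$; by~\eqref{eq:PwIntOnKandHatK} its pull-back is $\widehat w = \widehat v - \widehat{\mathfrak I}_\mathrm{pw}\widehat v$. Since $\mathcal K_h$ is a regular anisotropic mesh, $\widehat K$ is regular in the sense of Definition~\ref{def:reg_isotropic_mesh} and satisfies $1 \le h_{\widehat K} \le c$ by~\eqref{eq:BoundForRefDiameter}. Hence the known error estimate for nodal interpolation on regular isotropic polytopal elements (see~\cite{Weisser2014} and related works on the VEM and generalized barycentric coordinates) gives, for $\ell = 0,1$,
\[
 |\widehat w|_{H^\ell(\widehat K)}^2 \le c\, h_{\widehat K}^{2(2-\ell)}\,|\widehat v|_{H^2(\widehat K)}^2 \le c\,|\widehat v|_{H^2(\widehat K)}^2,
\]
the last inequality because $h_{\widehat K}$ is bounded by a mesh-regularity constant.

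Next I would transform the left-hand side. The map~\eqref{eq:trafo} has constant Jacobian with $|\det A_K| = 1/|K|$, because $|\widehat K| = 1$ by~\eqref{eq:Alpha} and Lemma~\ref{lem:PropTransform}. For $\ell = 0$ this yields directly $\|w\|_{L_2(K)}^2 = |K|\,\|\widehat w\|_{L_2(\widehat K)}^2$. For $\ell = 1$ I would instead invoke Lemma~\ref{lem:MapH1norm} applied to~$w$, which gives $|w|_{H^1(K)}^2 \le \sqrt{\prod_{j=1}^{d-1}\lambda_{K,j}/\lambda_{K,d}}\;|\widehat w|_{H^1(\widehat K)}^2$. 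In both cases the prefactor multiplying $|\widehat w|_{H^\ell(\widehat K)}^2$ equals $|K|\,S_\ell(K)$, so together with the reference estimate above the task reduces to bounding $|\widehat v|_{H^2(\widehat K)}^2$ by $|K|^{-1}$ times the weighted sum of the $L_K(\vec u_{K,i},\vec u_{K,j};v)$.

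For that last step I would use the Hessian identity~\eqref{eq:IdentityHessian}: writing $\widehat H(\widehat v) = \alpha_K^{-2}\Lambda_K^{1/2}\big(U_K^\top H(v)U_K\big)\Lambda_K^{1/2}$, its $(i,j)$ entry is $\alpha_K^{-2}\sqrt{\lambda_{K,i}\lambda_{K,j}}\,\vec u_{K,i}^\top H(v)\vec u_{K,j}$, so that summing the squares of all entries gives, pointwise,
\[
 \sum_{i,j=1}^d\Big(\frac{\partial^2\widehat v}{\partial\widehat x_i\,\partial\widehat x_j}\Big)^2 = \alpha_K^{-4}\sum_{i,j=1}^d\lambda_{K,i}\lambda_{K,j}\big(\vec u_{K,i}^\top H(v)\vec u_{K,j}\big)^2.
\]
Integrating over $\widehat K$, changing variables back to~$K$ (again with Jacobian $1/|K|$), and using that $|\widehat v|_{H^2(\widehat K)}^2$ is, up to a fixed constant, the integral of the left-hand side, I obtain
\[
 |\widehat v|_{H^2(\widehat K)}^2 \le \frac{c\,\alpha_K^{-4}}{|K|}\sum_{i,j=1}^d\lambda_{K,i}\lambda_{K,j}\int_K\big(\vec u_{K,i}^\top H(v)\vec u_{K,j}\big)^2 d\vec x = \frac{c\,\alpha_K^{-4}}{|K|}\sum_{i,j=1}^d\lambda_{K,i}\lambda_{K,j}L_K(\vec u_{K,i},\vec u_{K,j};v).
\]
Inserting this into the two cases of the previous paragraph, the factor $|K|$ cancels and the claimed estimate with $S_\ell(K)$ drops out. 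The one place that needs care is precisely this combination: the anisotropic factor $\Lambda_K^{1/2}U_K^\top$ in $A_K$ enters both the Hessian identity and the volume element, and it is the interplay of these two --- together with the choice~\eqref{eq:Alpha} of $\alpha_K$ --- that produces the power $\alpha_K^{-4}$, the weights $\lambda_{K,i}\lambda_{K,j}$, and the additional factor $S_\ell(K)$ in the case $\ell=1$; the remaining ingredients are routine applications of Lemma~\ref{lem:PropTransform}, Lemma~\ref{lem:MapH1norm}, and the isotropic interpolation estimate.
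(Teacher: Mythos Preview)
Your proposal is correct and follows essentially the same route as the paper: transform to the reference configuration via~\eqref{eq:PwIntOnKandHatK}, apply the isotropic interpolation estimate together with~\eqref{eq:BoundForRefDiameter}, use Lemma~\ref{lem:MapH1norm} (resp.\ the $L_2$ change of variables) to pick up the factor $|K|\,S_\ell(K)$, and then expand $|\widehat v|_{H^2(\widehat K)}^2$ via the Hessian identity~\eqref{eq:IdentityHessian} as the Frobenius norm of $\alpha_K^{-2}\Lambda_K^{1/2}U_K^\top H(v)U_K\Lambda_K^{1/2}$. The only cosmetic difference is that the paper records the last step as an equality (using $\|\widehat H(\widehat v)\|_F^2$), whereas you write it as an inequality ``up to a fixed constant''; either way the combination yields the stated bound.
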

\begin{proof}
Property~\eqref{eq:PwIntOnKandHatK} together with the scaling to the reference configuration and Lemma~\ref{lem:MapH1norm} as well as~\eqref{eq:BoundForRefDiameter} yield
\begin{eqnarray*}
 |v-\mathfrak I_\mathrm{pw}v|_{H^0(K)}^2 
 & \leq & |K|\;S_\ell(K)\;\|\widehat v-\widehat{\mathfrak I}_\mathrm{pw}\widehat v\|_{L_2(\widehat K)}^2\\
 & \leq & ch_{\widehat K}^{2(2-\ell)}|K|\;S_\ell(K)\;|\widehat v|_{H^2(\widehat K)}^2\\
 & \leq & c|K|\;S_\ell(K)\;|\widehat v|_{H^2(\widehat K)}^2,
\end{eqnarray*}
where known interpolation estimates have been applied on the regular isotropic element~$\widehat K$, see, e.g.,~\cite{RjasanowWeisser2014,Weisser2014}. Next, we transform the $H^2$-semi-norm back to the element~$K$. Employing the mapping and the relation~\eqref{eq:IdentityHessian} gives
\begin{eqnarray*}
 |\widehat v|_{H^2(\widehat K)}^2
 & = & \int_{\widehat K}\|\widehat H(\widehat v)\|_F^2d\widehat{\vec x}\\
 & = & \frac{\alpha_K^{-4}}{|K|}\int_K\|\Lambda_K^{1/2}U_K^\top H(v)U_K\Lambda_K^{1/2}\|_F^2\,d\vec x,
\end{eqnarray*}
where $\|\cdot\|_F$ denotes the Frobenius norm of a matrix. A small exercise yields
\[
 \|\Lambda_K^{1/2}U_K^\top H(v)U_K\Lambda_K^{1/2}\|_F^2
 = \sum_{i,j=1}^d \lambda_{K,i}\lambda_{K,j}\left(\vec u_{K,i}^\top H(v)\vec u_{K,j}\right)^2,
\]
and consequently
\[
 |\widehat v|_{H^2(\widehat K)}^2
 = \frac{\alpha_K^{-4}}{|K|} \sum_{i,j}^d \lambda_{K,i}\lambda_{K,j}L_K(\vec u_{K,i},\vec u_{K,j};v).
\]
Combining the derived results yields the desired estimates.
\end{proof}

For the comparison with the work of Formaggia and Perotto we remember that their lambdas behave like $\lambda_{i,K}\sim\sqrt{\lambda_{K,i}}$, $i=1,2$. Employing the assumption $\alpha_K \sim 1$ raised in the comparison of Section~\ref{subsec:Clement}, we find
\[
 \frac{\sqrt{\lambda_{K,1}/\lambda_{K,2}}}{|K|}
 \sim \frac{1}{\lambda_{K,2}}.
\]
Therefore, we recognize that the estimates in Theorem~\ref{th:PointwiseInterpolationEstimate} match the results of Lemma~2 in~\cite{FormaggiaPerotto2003}, but on much more general meshes.

\section{Numerical assessment: anisotropic polytopal meshes}
\label{sec:AdaptiveRefinement}
In the introduction we already mentioned that polygonal and polyhedral meshes are much more flexible in meshing than classical finite element shapes. This is in particular true for the generation of anisotropic meshes. 
In this section we give a first numerical assessment on polytopal anisotropic mesh refinement. We propose a bisection approach that does not rely on any initially prescribed direction and which is applicable in two- and three-dimensions. Classical bisection approaches for triangular and tetrahedral meshes do not share this versatility and they have to be combined with additional strategies like edge swapping, node removal and local node movement, see~\cite{Schneider2013}.

Starting from the local interpolation error estimate in Theorem~\ref{th:ClementInterpolationEstimates}, we obtain the global version
\[
	\|v-\mathfrak I_Cv\|_{L_2(\Omega)}\leq c\left(\sum_{K\in\mathcal K_h}\|A_K^{-\top}\nabla v\|_{L_2(K)}^2\right)^{1/2}
\]
by exploiting Remark~\ref{rem:NeighbouringElements} and Proposition~\ref{prop:BoundedNumOfNodesVSElems}. As in the derivation of Proposition~\ref{prop:ClementInterpolationEstimates}, we easily see that
\[
  \eta = \sqrt{\sum_{K\in\mathcal K_h}\eta_K}
  \qquad\mbox{with}\qquad
  \eta_K = \alpha_K^{-2}
      \sum_{j=1}^d \lambda_{K,j}\,\vec u_{K,j}^\top\, G_K^*(v)\, \vec u_{K,j}
\]
and
\[
  G_K^*(v)
  = \left(
      \int_{K} \frac{\partial v}{\partial x_i}\frac{\partial v}{\partial x_j}\,d\vec x 
    \right)_{i,j=1}^d\in\mathbb R^{d\times d},
  \quad
  \vec x = (x_1,\ldots,x_d)^\top.
\]
is a good error measure and the local values~$\eta_K$ may serve as error indicators over the polytopal elements. This estimate also remains meaningful on isotropic polytopal meshes, cf.\ Remark~\ref{rem:CaseIsotropic}. In the case that $v\in H^1(\Omega)$ and its derivatives are known, we can thus apply the following adaptive mesh refinement algorithm:
\begin{enumerate}
 \item Let $\mathcal{K}_0$ be a given initial mesh and $\ell=0$.
 \item\label{item:alg_errindicators} Compute the error indicators $\eta_K$ and $\eta$ with the knowledge of the exact function~$v$ and its derivatives.
 \item\label{item:alg_mark} Mark all elements~$K$ for refinement which fulfil $\eta_K > 0.9\eta^2/|\mathcal K_\ell|$, where $|\mathcal K_\ell|$ is the number of elements in the current mesh.
 \item\label{item:alg_refine} Refine the marked elements as described below in order to obtain a refined mesh $\mathcal K_{\ell+1}$.
 \item Go to~\ref{item:alg_errindicators}.
\end{enumerate}
In step~\ref{item:alg_mark}, we have chosen a equidistribution strategy which marks all elements for refinement whose error indicator is larger than the mean value. The factor $0.9$ has been chosen for stabilizing reasons in the computations when the error is almost uniformly distributed. For the refinement in step~\ref{item:alg_refine}, we have a closer look at the first term in the sum of~$\eta_K$, which reads
\[
  \lambda_{K,1}\;\frac{\vec u_{K,1}^\top\, G_K^*(v)\, \vec u_{K,1}}{\vec u_{K,1}^\top\, \vec u_{K,1}},
\]
because of $|\vec u_{K,1}|=1$. Since $\lambda_{K,1}\gg\lambda_{K,d}$ for anisotropic elements, the refinement process should try to minimize the quotient such that the whole term does not dominate the error over~$K$. Obviously, we are dealing here with the Rayleigh quotient, which is minimal if $\vec u_{K,1}$ is the eigenvector to the smallest eigenvalue of $G_K^*(v)$. As consequence, the longest stretching of the polytopal element~$K$ should be aligned with the direction
of this eigenvector.
In order to achieve the correct alignment for the next refined mesh, we may bisect the polytopal element orthogonal to the eigenvector which belongs to the largest eigenvalue of $G_K^*(v)$. Thus, we propose the following refinement strategies:
\begin{description}
 \item[ISOTROPIC] The elements are bisected as introduced in~\cite{Weisser2011}, i.e., they are split orthogonal to the eigenvector corresponding to the largest eigenvalue of $M_\mathrm{Cov}(K)$.
 \item[ANISOTROPIC] In order to respect the anisotropic nature of~$v$, we split the elements orthogonal to the eigenvector corresponding to the largest eigenvalue of $G_K^*(u)$.
\end{description}

For the numerical experiments we consider $\Omega=(0,1)^2$ and the function
\begin{equation}\label{eq:Tanh}
 v(x_1,x_2) = \tanh(60x_2) - \tanh(60(x_1-x_2)-30),
\end{equation}
taken from~\cite{HuangKamenskiLang2010}, which has two sharp layers: one along the $x_1$-axis and one along the line given by $x_2= x_1-0.5$. The function as well as the initial mesh is depicted in Fig.~\ref{fig:Tanh}.
\begin{figure}[tbp]
  \includegraphics[trim=3cm 0cm 4.5cm 0cm, width=0.3\textwidth, clip]{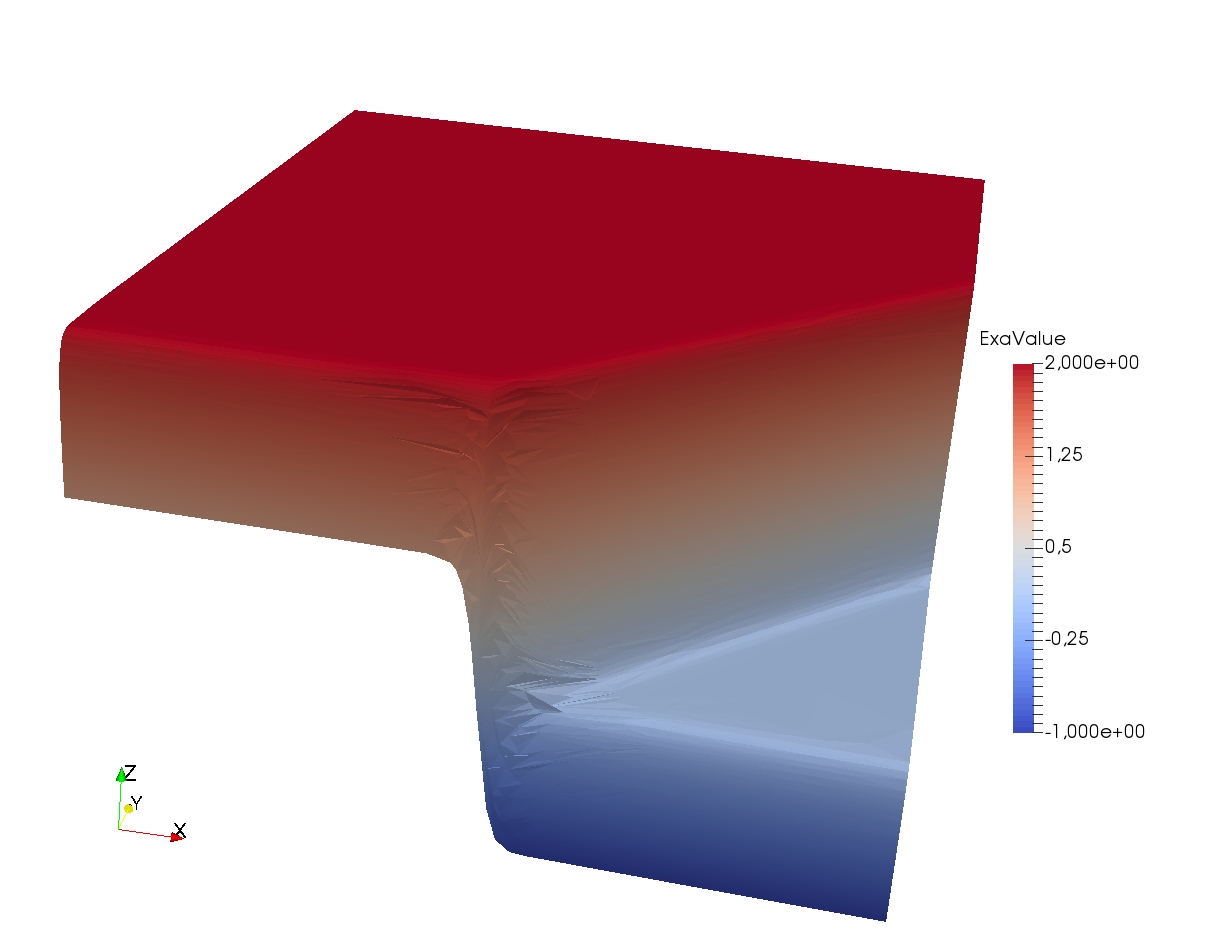}\hfil
  \includegraphics[trim=4.1cm 7.2cm 3.6cm 9.4cm, width=0.27\textwidth, clip]{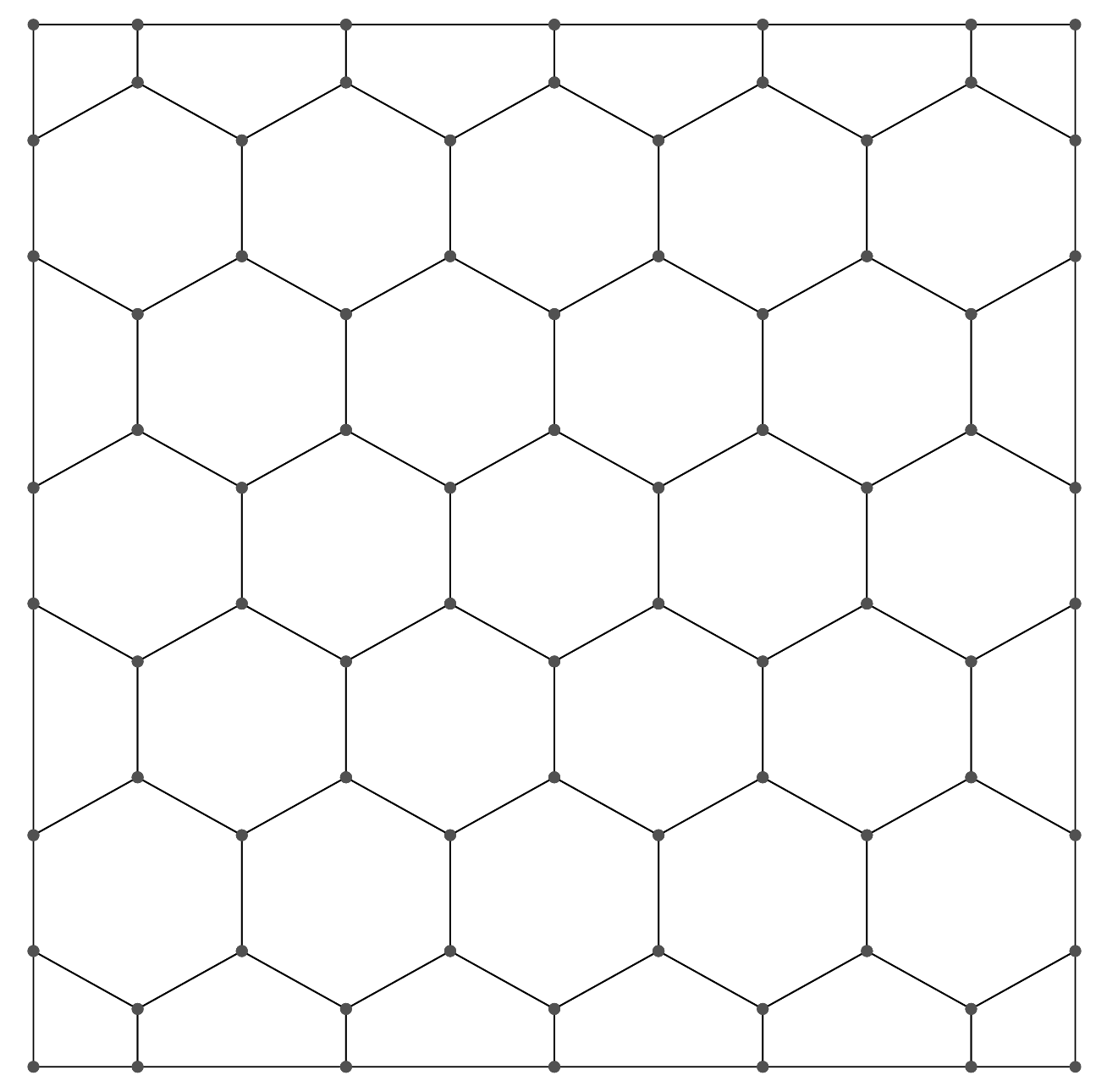}
  \caption{Visualization of function with anisotropic behaviour (left) and initial mesh (right)}
 \label{fig:Tanh}
\end{figure}

\subsection{Test: mesh refinement}
\label{subsec:Test1}
In the first test we generate several sequences of polygonal meshes starting from an initial grid, see Fig.~\ref{fig:Tanh} right. These meshes contain naturally hanging nodes and their element shapes are quite general. First, the initial mesh is refined uniformly, i.e.\ all elements of the discretization are bisected in each refinement step. Here, the ISOTROPIC strategy is performed for the bisection. The mesh after $6$ refinements as well as a zoom-in is depicted in Fig.~\ref{fig:UNIFORM}. The uniform refinement clearly generates a lot of elements in regions where the function~\eqref{eq:Tanh} is flat and where only a few elements would be sufficient for the approximation.
\begin{figure}[tbp]
 \includegraphics[trim=2.5cm 6.0cm 3.1cm 8.5cm, width=0.45\textwidth, clip]{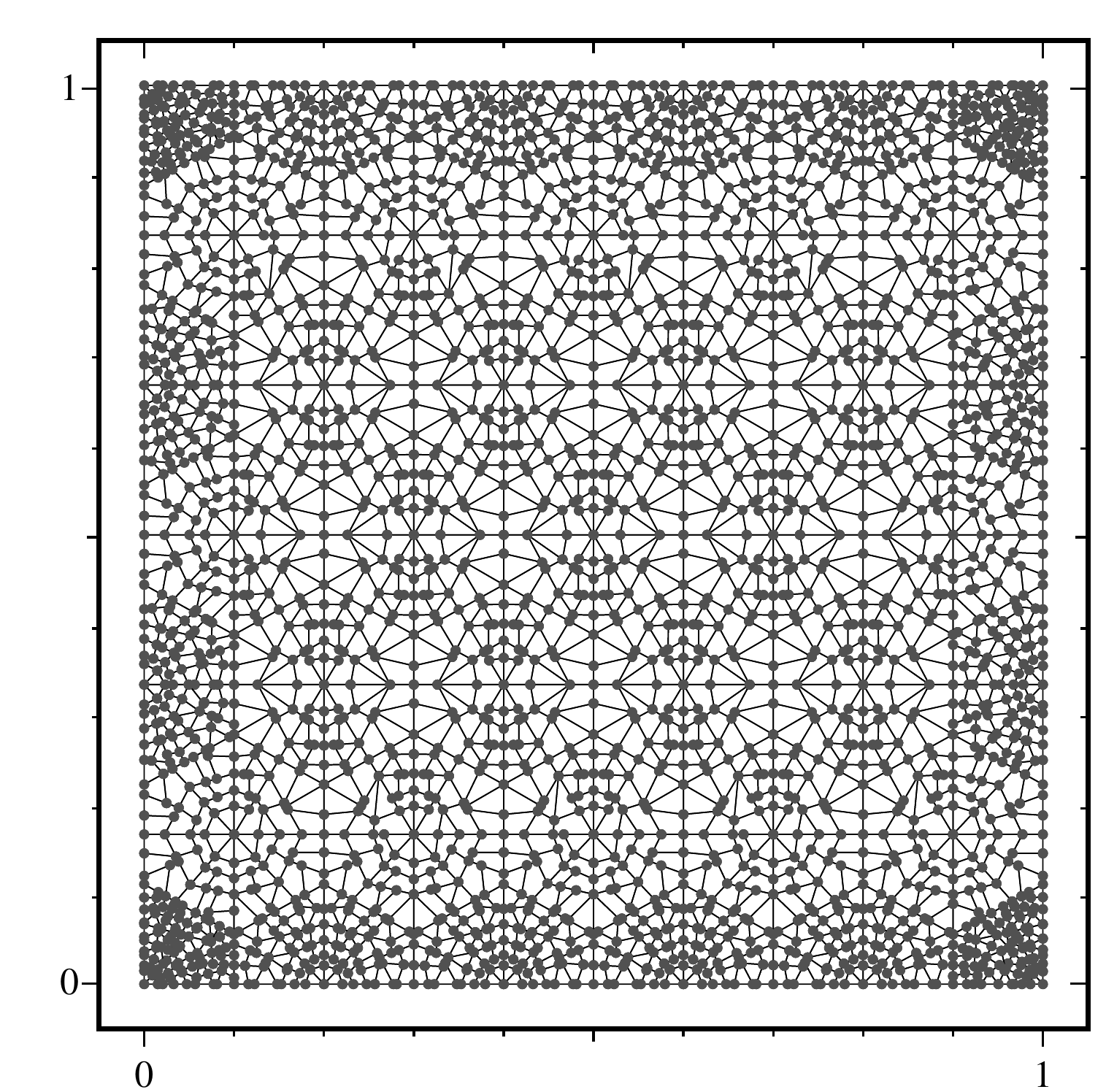}\hfil
 \includegraphics[trim=2.5cm 6.0cm 3.1cm 8.5cm, width=0.45\textwidth, clip]{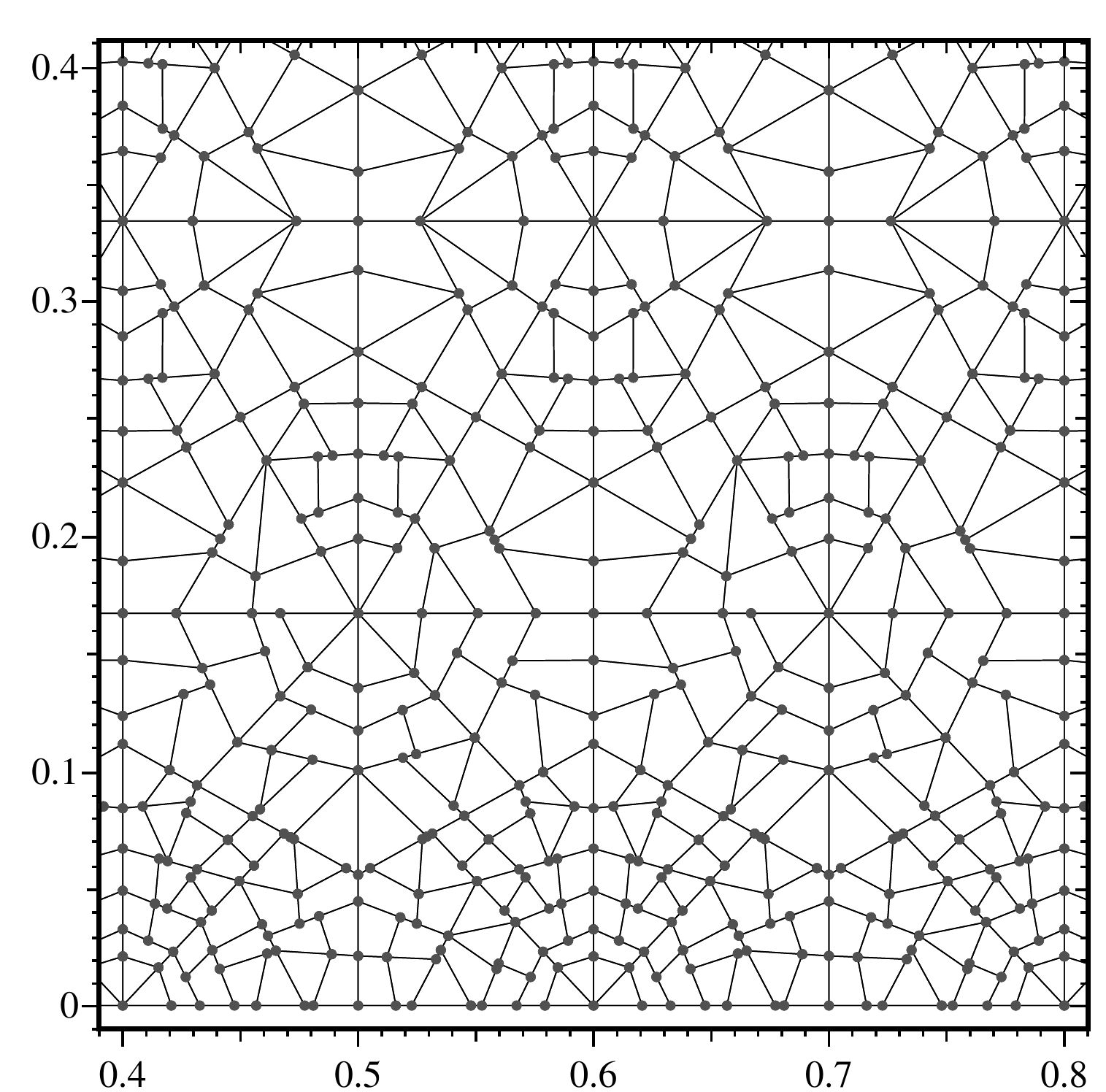}
 \caption{Mesh after $6$ uniform refinement steps using the ISOTROPIC  strategy and zoom-in}
 \label{fig:UNIFORM}
\end{figure}

Next, we perform the adaptive refinement algorithm as described above for the different bisection strategies. The generated meshes after $6$ refinement steps are visualized in the Figs.~\ref{fig:ISOTROPIC} and~\ref{fig:ANISOTROPIC} 
together with a zoom-in of the region where the two layers of the function~\eqref{eq:Tanh} meet. 
Both strategies detect the layers and adapt the refinement to the underlying function. The adaptive strategies clearly outperform the uniform refinement with respect to the number of nodes which are needed to resolve the layers. 
Whereas the ISOTROPIC strategy in Fig.~\ref{fig:ISOTROPIC} keeps the aspect ratio of the polygonal elements bounded, the ANISOTROPIC bisection produces highly anisotropic elements, see Fig.~\ref{fig:ANISOTROPIC}. These anisotropic elements coincide with the layers of the function very well. 

Finally, we compare the error measure~$\eta$ for the different strategies. This value is given with respect to the number of degrees of freedom, which coincides with the number of nodes, in a double logarithmic plot in Fig.~\ref{fig:convergence_etaKnown}. The error measure decreases most rapidly for the ANISOTROPIC strategy and consequently these meshes are most appropriate for the approximation of the function~\eqref{eq:Tanh}. The slope corresponds to quadratic convergence in finite element analysis.
\begin{figure}[tbp]
 \includegraphics[trim=2.5cm 6.0cm 3.1cm 8.5cm, width=0.45\textwidth, clip]{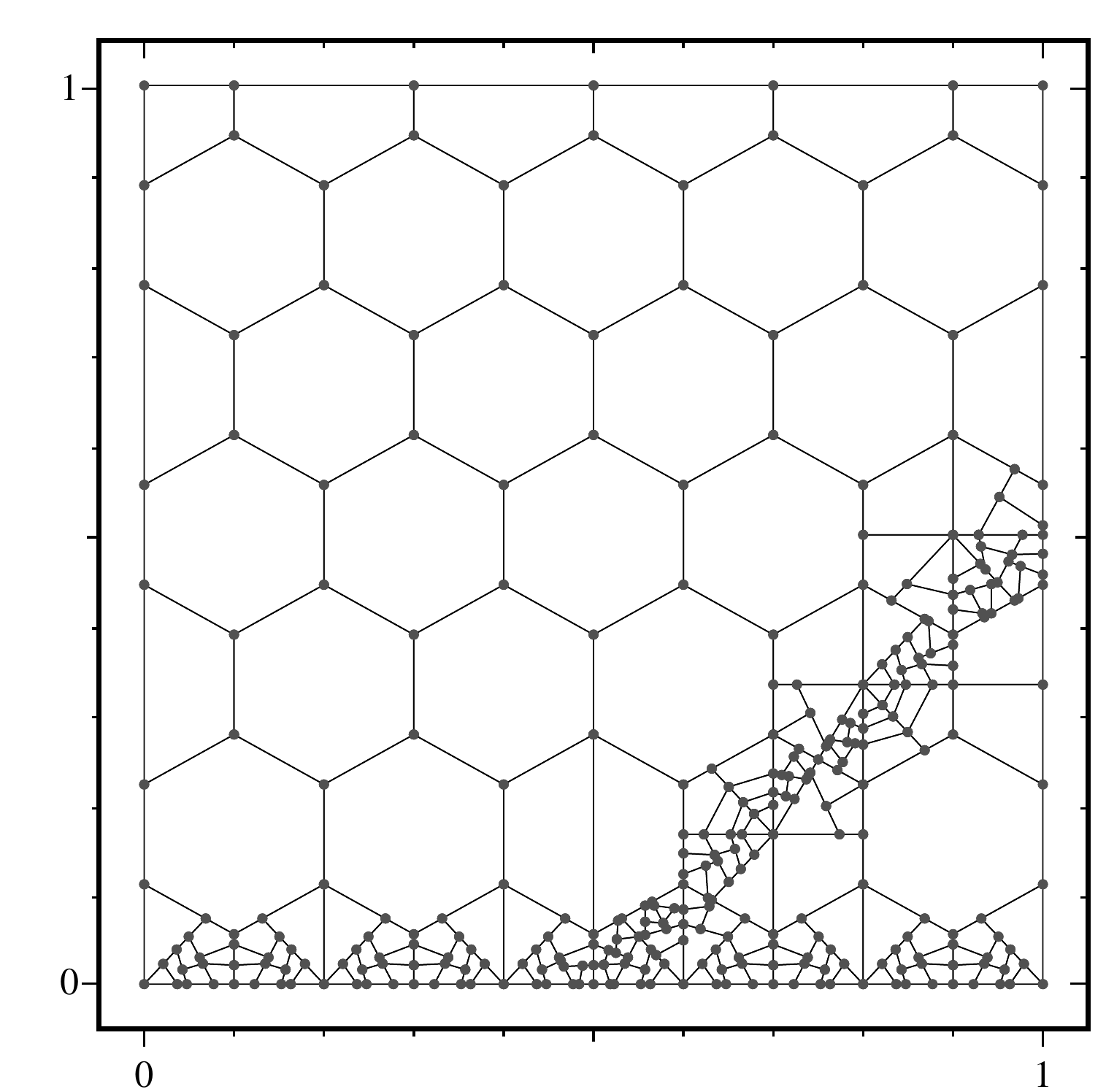}\hfil
 \includegraphics[trim=2.5cm 6.0cm 3.1cm 8.5cm, width=0.45\textwidth, clip]{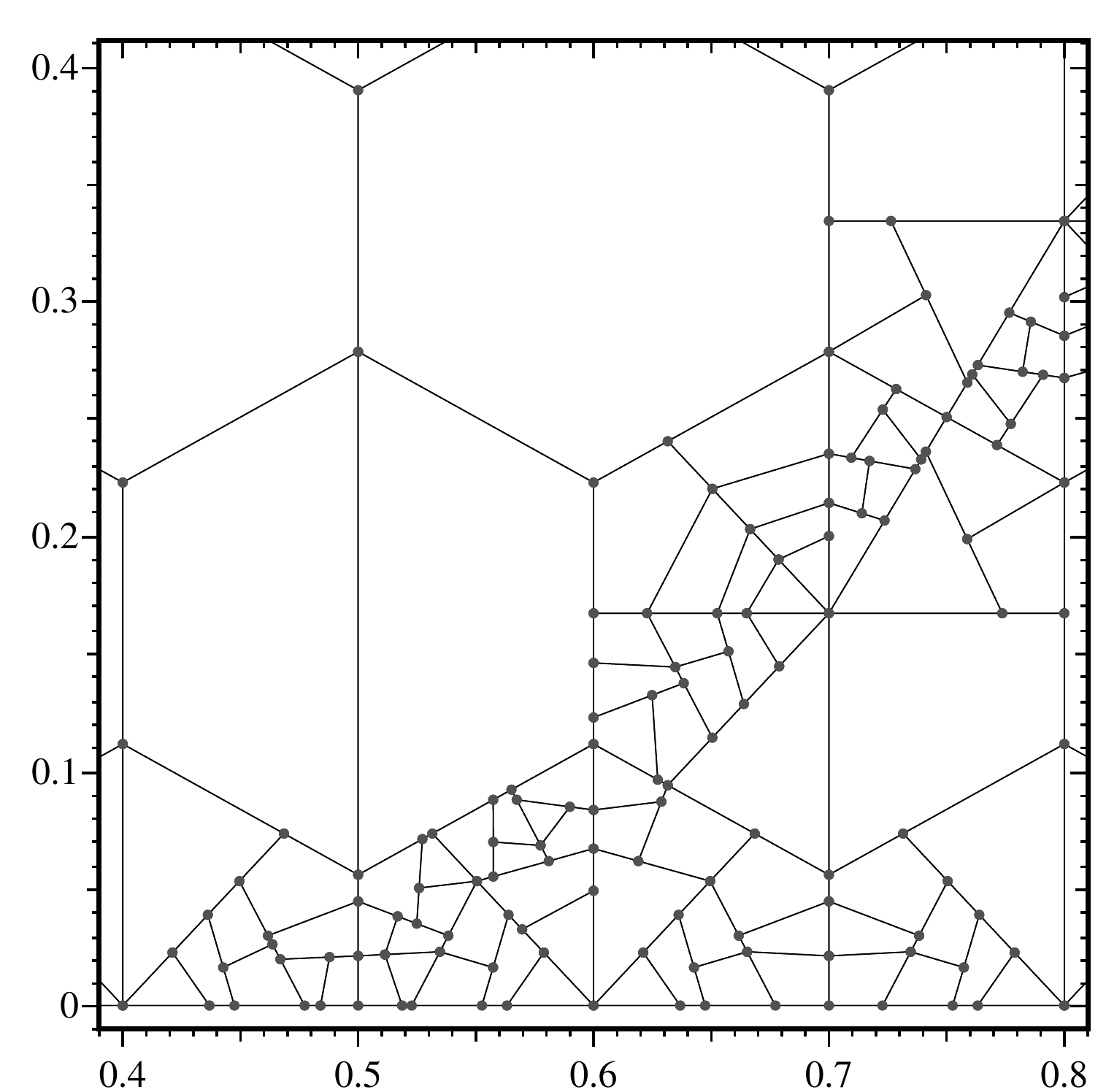}
 \caption{Mesh after $6$ adaptive refinement steps for the ISOTROPIC strategy and zoom-in}
 \label{fig:ISOTROPIC}
\end{figure}
\begin{figure}[tbp]
 \includegraphics[trim=2.5cm 6.0cm 3.1cm 8.5cm, width=0.45\textwidth, clip]{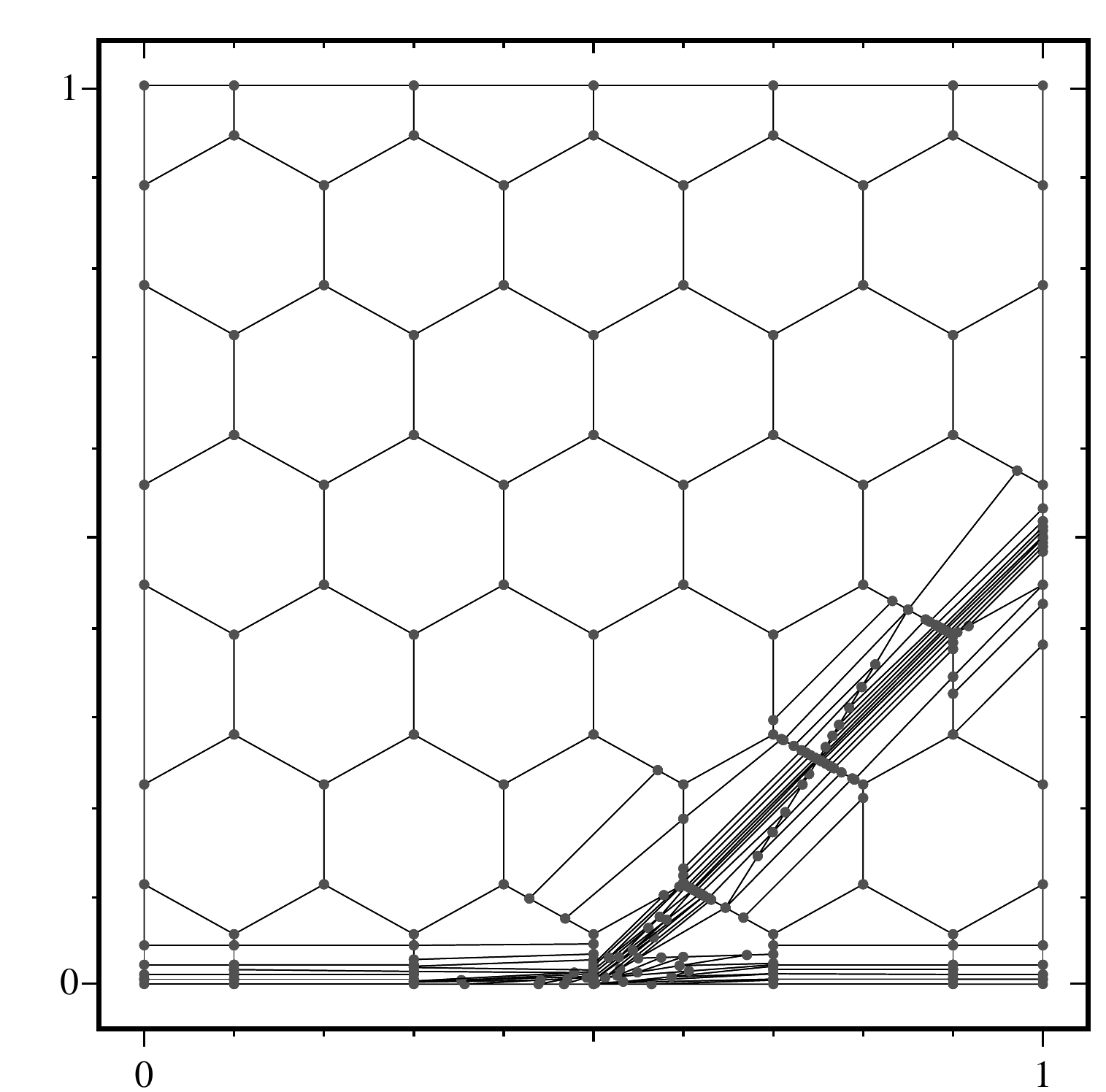}\hfil
 \includegraphics[trim=2.5cm 6.0cm 3.1cm 8.5cm, width=0.45\textwidth, clip]{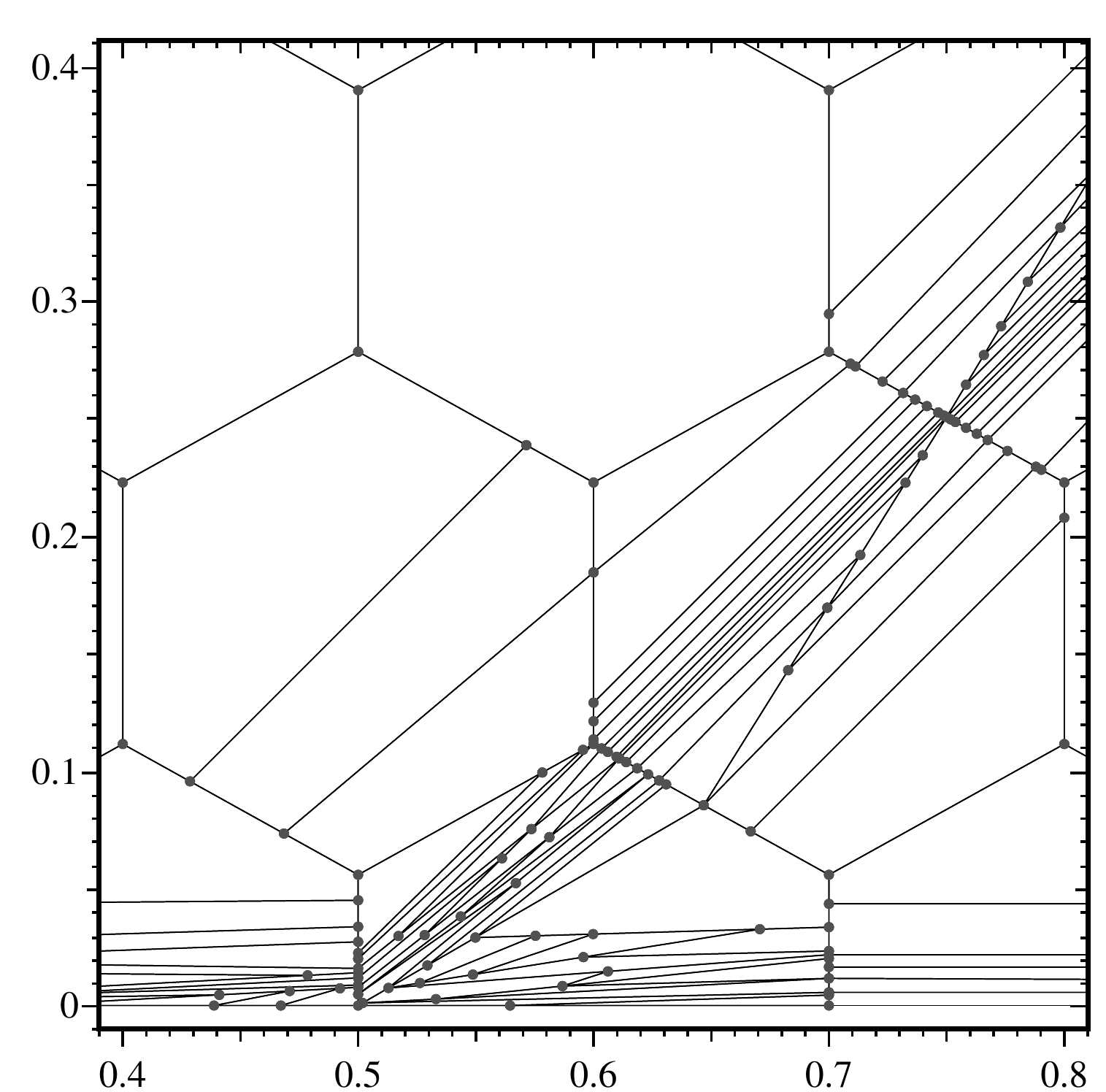}
 \caption{Mesh after $6$ adaptive refinement steps for the ANISOTROPIC strategy and zoom-in}
 \label{fig:ANISOTROPIC}
\end{figure}
\begin{figure}[tbp]
 \scalebox{0.9}{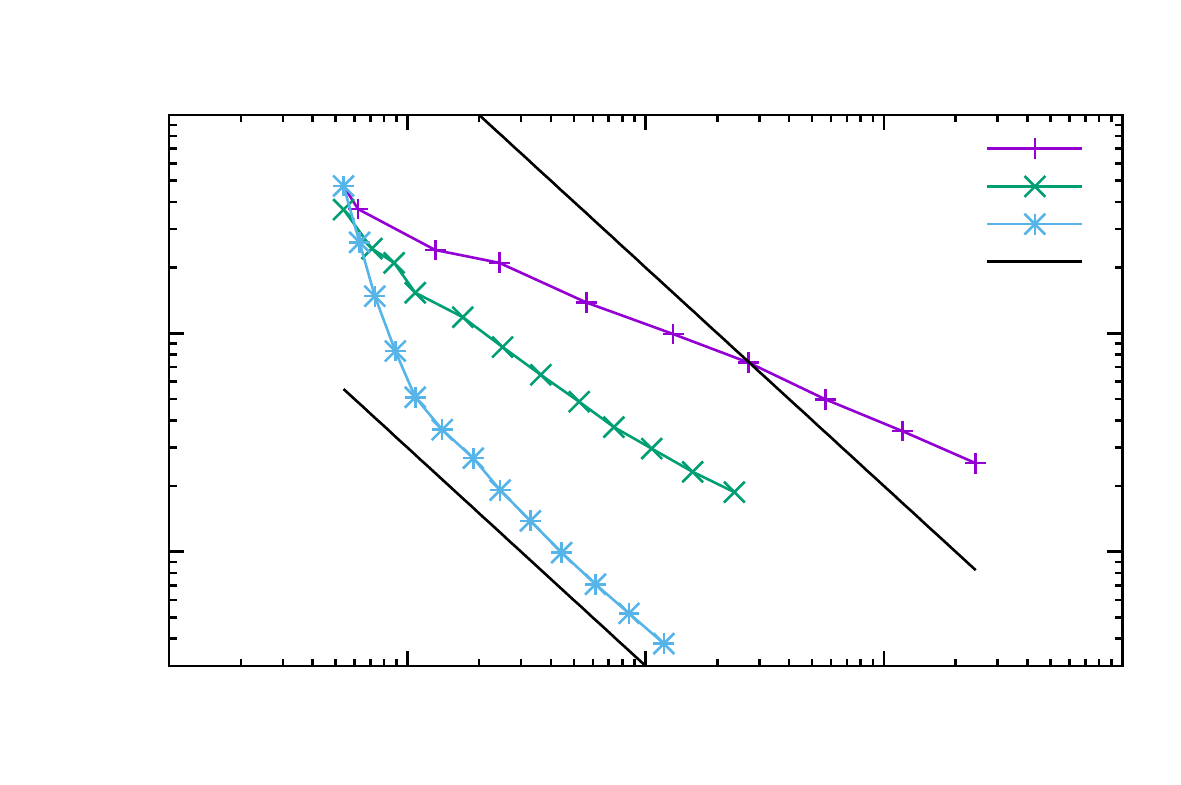}
 \caption{Convergence graph of the anisotropic error measure~$\eta$ with respect to the number of degrees of freedom for the different refinement strategies}
 \label{fig:convergence_etaKnown}
\end{figure}

\subsection{Test: mesh properties}
We analyse the meshes more carefully. For this purpose we pick the 13th mesh of the sequence generated with the ISOTROPIC and the ANISOTROPIC refinement strategy. In Sec.~\ref{subsec:Characterisation}, we have introduced the ratio $\lambda_{K,1}/\lambda_{K,2}$ for the characterisation of the anisotropy of an element. In Fig.~\ref{fig:MeshInfo_lambda}, we give this ratio with respect to the element ids for the two chosen meshes. For the ISOTROPIC refined mesh the ratio is clearly bounded by~$10$ and therefore the mesh consists of isotropic elements according to our characterisation. In the ANISOTROPIC refined mesh, however, the ratio varies in a large interval. The mesh consists of several isotropic elements, but there are mainly anisotropic polygons. The ratio of the most anisotropic elements exceeds~$10^5$ in this example.
\begin{figure}[tbp]
  \scalebox{0.9}{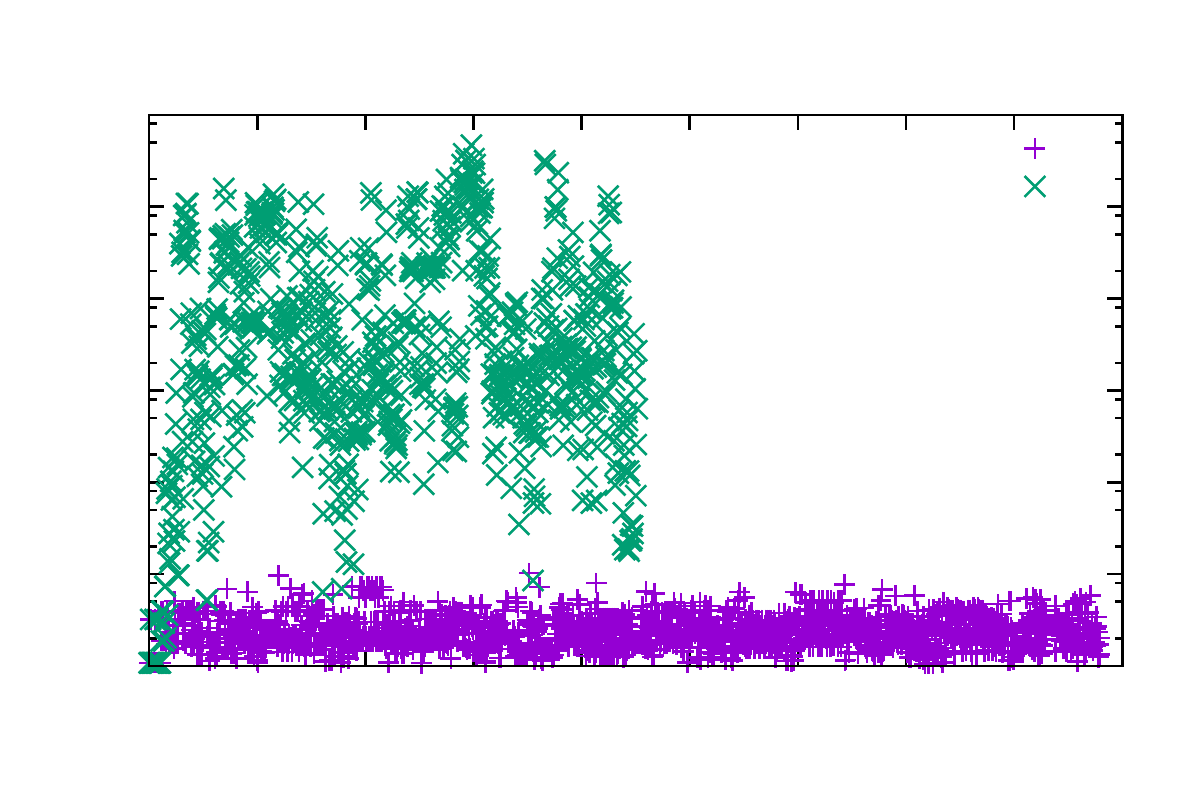}
  \caption{Quotient $\lambda_{K,1}/\lambda_{K,2}$ for all elements in the 13th mesh of the sequence with ISOTROPIC and ANISOTROPIC refinement}
 \label{fig:MeshInfo_lambda}
\end{figure}

Next we address the scaling parameter~$\alpha_K$ in these meshes. In the comparison of the derived estimates with those of Formaggia and Perotto~\cite{FormaggiaPerotto2003}, it has been assumed that $\alpha_K\sim 1$. In Fig.~\ref{fig:MeshInfo_alpha}, we present a histogram for the distribution of~$\alpha_K$ in the two selected meshes. As expected the values stay bounded for the ISOTROPIC refined mesh. Furthermore, $\alpha_K$ stays in the same range for the ANISOTROPIC refinement. In our example, all values lie in the interval $(0.28,0.32)$ although we are dealing with elements of quite different aspect ratios, cf. Fig.~\ref{fig:MeshInfo_lambda}.
\begin{figure}[tbp]
  \scalebox{0.9}{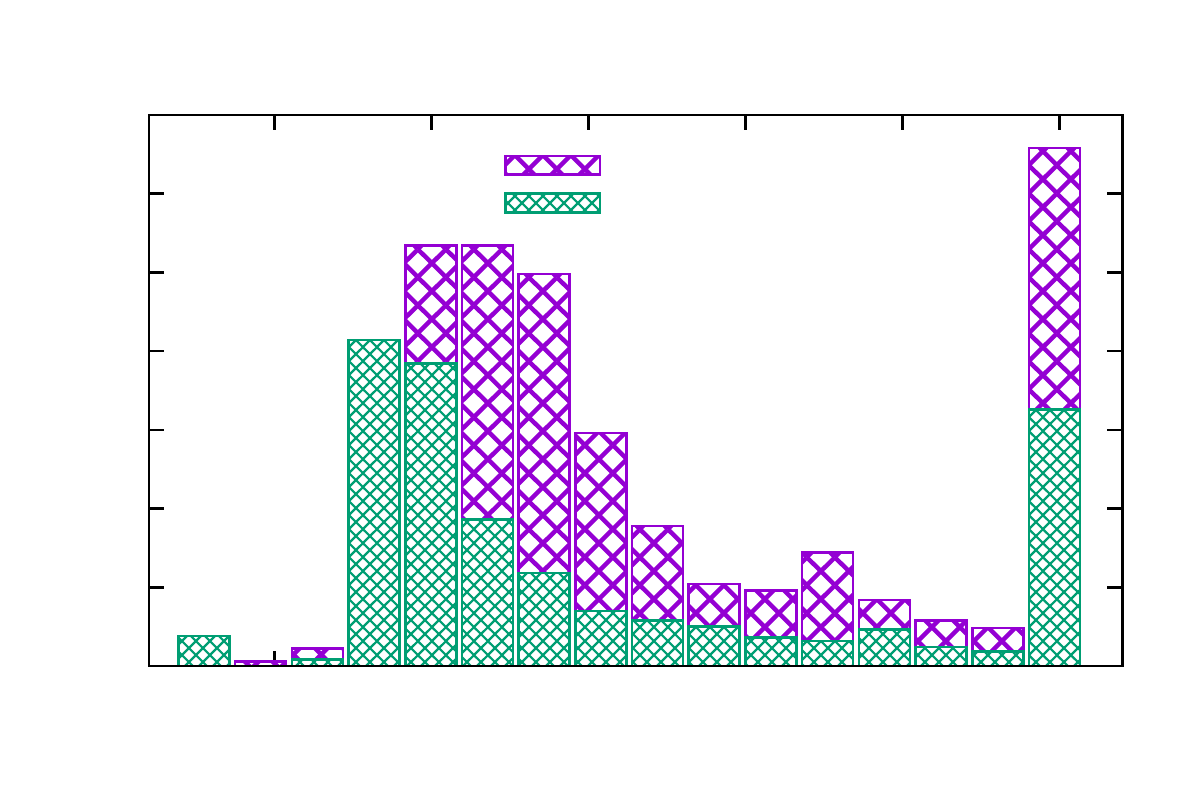}
  \caption{Histogram for the distribution of $\alpha_K$ for the 13th mesh in the sequence with ISOTROPIC and ANISOTROPIC refinement}
 \label{fig:MeshInfo_alpha}
\end{figure}

\subsection{Test: interpolation error}
In the last test we apply the pointwise interpolation operator~\eqref{eq:PWInterpolationOp} to the function~\eqref{eq:Tanh} over the meshes generated in Sec.~\ref{subsec:Test1} and study numerically the convergence. The computations are done with a BEM-based FEM implementation written in C. For more details we refer the interested reader to~\cite{RjasanowWeisser2012}. Here, the implicitly defined basis functions~$\psi_i$ are treated locally by means of boundary element methods (BEM). The implementation uses the coarsest possible BEM discretization and it is not yet adapted to handle anisotropic elements. Due to the existence of a representation formula, it is possible to evaluate~$\psi_i$ inside the elements and thus to approximate, e.g., the $L_2$-norm with the help of numerical quadrature over polygonal elements.

\begin{figure}[tbp]
 \scalebox{0.9}{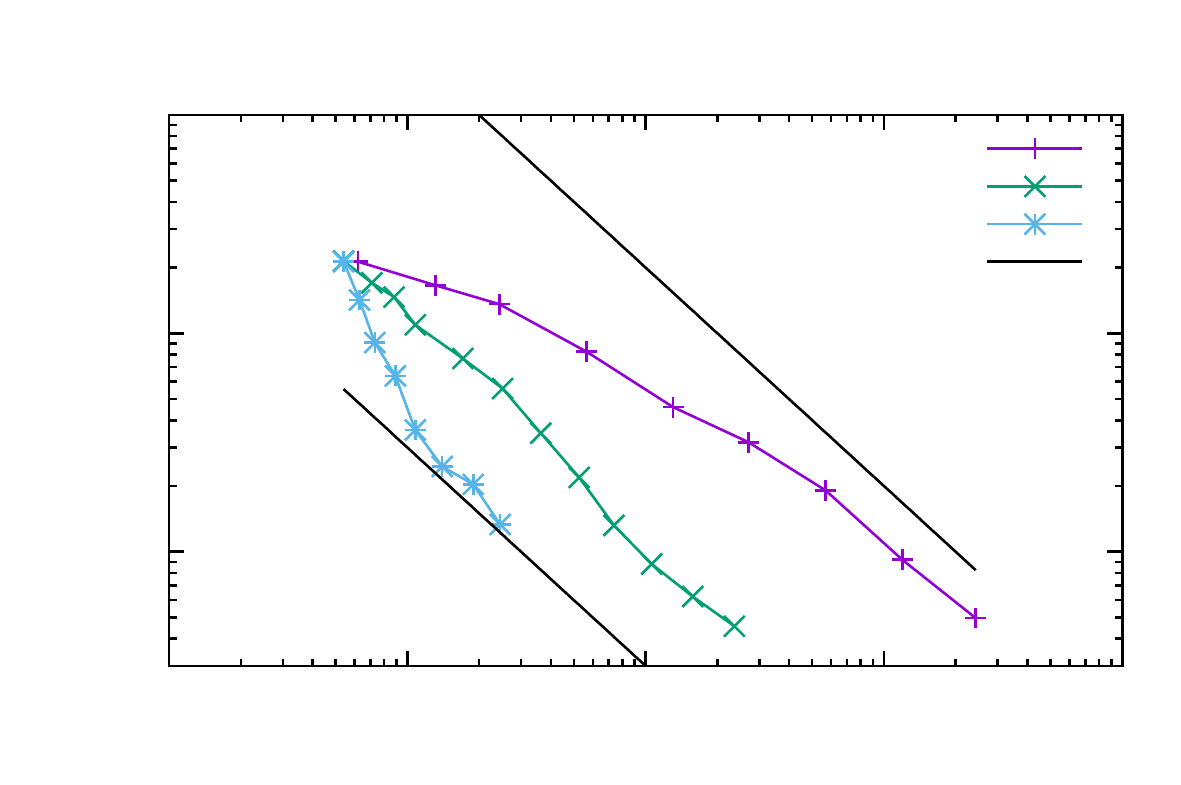}
 \caption{Convergence graph of the $L_2$-error with respect to the number of degrees of freedom for the different refinement strategies}
 \label{fig:convergence_L2}
\end{figure}
The convergence of the interpolation is studied numerically for the different sequences of meshes. We consider the interpolation error in the $L_2$-norm. In Fig.~\ref{fig:convergence_L2}, we give $\|v-\mathfrak I_\mathrm{pw}v\|_{L_2(\Omega)}$ with respect to the number of degrees of freedom in a double logarithmic plot. 
Since $v\in H^2(\Omega)$ in this experiment, we expect quadratic convergence with respect to the mesh size on the sequence of uniform refined meshes. This convergence rate corresponds to a slope of one in the double logarithmic plot in two-dimensions. 
In Fig.~\ref{fig:convergence_L2}, we observe that the uniform refinement reaches indeed quadratic convergence after a pre-asymptotic regime. The optimal rate of convergence is achieved as soon as the layers are resolved in the mesh. On the adaptive generated meshes, however, the interpolation error converges with optimal rates from the beginning. 
We can even recognize in Fig.~\ref{fig:convergence_L2} that the ANISOTROPIC refined meshes outperforms the others. The layers are captured within a few refinement steps and therefore the error reduces faster than for the ISOTROPIC refined meshes before it reaches the optimal convergence rate.

Let us compare the seventh meshes in the sequences which are obtained after six refinements and which are visualized in Figs.~\ref{fig:UNIFORM}--\ref{fig:ANISOTROPIC}. 
For the uniform refined mesh we have $2709$ nodes and it is $\|v-\mathfrak I_\mathrm{pw}v\|_{L_2(\Omega)} \approx 3.17\cdot 10^{-2}$. 
The adaptive refined mesh using ISOTROPIC bisection contains only $363$ nodes but yields a comparable error $\|v-\mathfrak I_\mathrm{pw}v\|_{L_2(\Omega)} \approx 3.49\cdot 10^{-2}$. 
The most accurate approximation is achieved on the ANISOTROPIC refined mesh with $\|v-\mathfrak I_\mathrm{pw}v\|_{L_2(\Omega)} \approx 2.04\cdot 10^{-2}$ and only $189$ nodes. A comparable interpolation error to the other refinement strategies is obtained on the fifth mesh of the sequence of ANISOTROPIC refined meshes. This mesh consists of $108$ nodes only.

\section{Conclusion}
\label{sec:Conclusion}
As seen in the previous section, polygonal elements allow for highly anisotropic meshes which are aligned to layers of the approximated function. Since hanging nodes are naturally included in the discretization, the refinements in the mesh are kept very local. These two properties result in the fact that approximations on anisotropic polytopal meshes are as accurate as on uniform and adaptive isotropic meshes but involving much less degrees of freedom. In consequence, the computational cost is reduced and therefore the efficiency increases.

The results for the derived interpolation and quasi-interpolation operators and their a priori error estimates are in accordance with previous works on classical element shapes. However, the new findings are applicable on much more general anisotropic polytopal meshes. In future projects we aim to apply our results for adaptive finite element strategies involving a posteriori error estimates on polytopal meshes for boundary value problems with highly anisotropic solutions.

\section*{Acknowledgement}
The author would like to thank Paola Antonietti and Marco Verani for their valuable comments during a research stay at the  Laboratory for Modeling and Scientific Computing MOX, Dipartimento di Matematica, Politecnico di Milano, Italy.

\end{document}